\documentclass{amsart}
\usepackage{amsmath,amsthm,amsfonts,amssymb}

\numberwithin{equation}{section}
\theoremstyle{plain}

\newtheorem{theorem}{Theorem}

\newtheorem{lemma}[theorem]{Lemma}

\newtheorem{remark}[theorem]{Remark}

\begin{document}

\title[Process-Level LDP for Nonlinear Hawkes Point Processes]{Process-Level Large Deviations for Nonlinear Hawkes Point Processes}
\author{LINGJIONG ZHU}
\address
{Courant Institute of Mathematical Sciences\newline
\indent New York University\newline
\indent 251 Mercer Street\newline
\indent New York, NY-10012\newline
\indent United States of America}
\email{ling@cims.nyu.edu}
\date{9 August 2011. \textit{Revised:} 13 October 2012}
\subjclass[2000]{60G55, 60F10.}
\keywords{Large deviations, rare events, point processes, Hawkes processes, self-exciting processes.}

\thanks{This research was supported partially by a grant from the National Science Foundation: DMS-0904701, DARPA grant
and MacCracken Fellowship at NYU}

\begin{abstract}
In this paper, we prove a process-level, also known as level-3 large deviation principle for a very general class
of simple point processes, i.e. nonlinear Hawkes process, with a rate function given by the process-level entropy, which
has an explicit formula.
\end{abstract}

\maketitle

\section{Introduction and Main Results}

In this paper, we study the process-level large deviations for a very general class of point processes,
i.e. nonlinear Hawkes processes. The rate function is given by the process-level entropy, which has an explicit formula
via the Girsanov theorem for two absolutely continuous point processes. Our methods and ideas should work for
some other point processes as well and we would expect the same expression for the rate function.

Let $N$ be a simple point process on $\mathbb{R}$ and let $\mathcal{F}^{-\infty}_{t}:=\sigma(N(C),C\in\mathcal{B}(\mathbb{R}), C\subset(-\infty,t])$ be
an increasing family of $\sigma$-algebras. Any nonnegative $\mathcal{F}^{-\infty}_{t}$-progressively measurable process $\lambda_{t}$ with
\begin{equation}
\mathbb{E}\left[N(a,b]|\mathcal{F}^{-\infty}_{a}\right]=\mathbb{E}\left[\int_{a}^{b}\lambda_{s}ds\big|\mathcal{F}^{-\infty}_{a}\right]
\end{equation}
a.s. for all intervals $(a,b]$ is called an $\mathcal{F}^{-\infty}_{t}$-intensity of $N$. We use the notation $N_{t}:=N(0,t]$ to denote the number of
points in the interval $(0,t]$. A point process $Q$ is simple if $Q(\exists t: N[t-,t]\geq 2)=0$.

A general Hawkes process is a simple point process $N$ admitting an $\mathcal{F}^{-\infty}_{t}$-intensity
\begin{equation}
\lambda_{t}:=\lambda\left(\int_{-\infty}^{t}h(t-s)N(ds)\right),\label{dynamics}
\end{equation}
where $\lambda(\cdot):\mathbb{R}^{+}\rightarrow\mathbb{R}^{+}$ is locally integrable, left continuous, 
$h(\cdot):\mathbb{R}^{+}\rightarrow\mathbb{R}^{+}$ and
we always assume that $\Vert h\Vert_{L^{1}}=\int_{0}^{\infty}h(t)dt<\infty$. 
In \eqref{dynamics}, $\int_{-\infty}^{t}h(t-s)N(ds)$ stands for $\int_{(-\infty,t)}h(t-s)N(ds)=\sum_{\tau<t}h(t-\tau)$, where
$\tau$ are the occurences of the points before time $t$. Since we assume that $\lambda(\cdot)$ is left continuous,
the intensity $\lambda_{t}$ is predictable. 

In the literatures, $h(\cdot)$ and $\lambda(\cdot)$ are usually referred to
as exciting function and rate function respectively.

A Hawkes process is linear if $\lambda(\cdot)$ is linear and it is nonlinear otherwise. The linear Hawkes process
was introduced in Hawkes \cite{Hawkes} and the nonlinear Hawkes process was introduced in 
Br\'{e}maud and Massouli\'{e} \cite{Bremaud}.

The Hawkes process captures both the self-exciting property and the clustering effect. 
You can think of the arrival times $\tau$ as ``bad'' events, which can be the arrivals of claims in insurance literature or 
the time of defaults of big firms in the real world. Hawkes process has many applications in finance. 
It is used for the calculation of conditional risk measures, ruin probabilities and credit default modeling. 
For a list of references to the applications in finance, see Liniger \cite{Liniger}.

Hawkes processes may also have applications in cosmology, ecology, epidemiology, seismological applications, 
neuroscience applications and DNA modeling. For a list of references to these applications, see Bordenave and Torrisi \cite{Bordenave}.

For a short history of Hawkes processes, we refer to Liniger \cite{Liniger}.

When $\lambda(\cdot)$ is nonlinear, Br\'{e}maud and Massouli\'{e} \cite{Bremaud} proves that 
under certain conditions, there exists a unique stationary version of the nonlinear Hawkes process and 
Br\'{e}maud and Massouli\'{e} \cite{Bremaud} also proves the convergence to equilibrium of a nonstationary version, 
both in distribution and in variation. 

Throughout this paper, we assume that
\begin{itemize}
\item 
The exciting function $h(t)$ is positive, continuous and decreasing for $t\geq 0$ and $h(t)=0$ for any $t<0$. 
We also assume that $\int_{0}^{\infty}h(t)dt<\infty$.

\item 
The rate function $\lambda(\cdot):[0,\infty)\rightarrow\mathbb{R}^{+}$ is increasing and $\lim_{z\rightarrow\infty}\frac{\lambda(z)}{z}=0$. 
We also assume that $\lambda(\cdot)$ is Lipschitz with constant $\alpha>0$, i.e. $|\lambda(x)-\lambda(y)|\leq\alpha|x-y|$ for any $x,y\geq 0$.
\end{itemize}

Because of the flexibility of $\lambda(\cdot)$ and $h(\cdot)$, this will give us a very wide class of 
simple point processes. Later, if you go through the proofs of the process-level large deviation principle
in our paper, you can see that if for any simple point process you want to obtain a process-level large deviation principle,
it has to satisfy some regularities like the assumptions in our paper. We refer to Section \ref{conclusion} for detailed discussions.

Let $\Omega$ be the set of countable, locally finite subsets of $\mathbb{R}$ and for any $\omega\in\Omega$ 
and $A\subseteq\mathbb{R}$ and write $\omega(A):=\omega\cap A$. For any $t\in\mathbb{R}$, we write $\omega(t)=\omega(\{t\})$. 
Let $N(A)=\#|\omega\cap A|$ denote the number of points in the set $A$ for any $A\subset\mathbb{R}$. 
We also use the notation $N_{t}$ to denote $N[0,t]$, the number of points up to time $t$ starting from time $0$. 
We define the shift operator $\theta_{t}$ by $\theta_{t}(\omega)(s)=\omega(t+s)$. We equip the sample space $\Omega$ 
with the topology in which the convergence $\omega_{n}\rightarrow\omega$ as $n\rightarrow\infty$ is defined by
\begin{equation}
\sum_{\tau\in\omega_{n}}f(\tau)\rightarrow\sum_{\tau\in\omega}f(\tau),
\end{equation}
for any continuous $f$ with compact support.

This topology is equivalent to the vague topology for random measures. For a discussion on vague topology, 
random measures and point processes, see for example Grandell \cite{Grandell}. One can equip the space of 
locally finite random measures with the vague topology. The subspace of integer valued random measures is 
then the space of point processes. A simple point processes is a point process without multiple jumps. 
The space of point processes is closed. But the space of simple point processes is not closed. 

Denote $\mathcal{F}^{s}_{t}=\sigma(\omega[s,t])$ for any $s<t$, i.e. the $\sigma$-algebra generated by 
all the possible configurations of points in the interval $[s,t]$. Denote $\mathcal{M}(\Omega)$ the space of 
probability measures on $\Omega$. We also define $\mathcal{M}_{S}(\Omega)$ as the space of simple point processes 
that are invariant with respect to $\theta_{t}$ with bounded first moment, i.e. for any $Q\in\mathcal{M}_{S}(\Omega)$,
$\mathbb{E}^{Q}[N[0,1]]<\infty$.
Define $\mathcal{M}_{E}(\Omega)$ as the set of ergodic simple point processes in $\mathcal{M}_{S}(\Omega)$. 
We define the topology of $\mathcal{M}_{S}(\Omega)$ as the following.
For a sequence $Q_{n}$ in $\mathcal{M}_{S}(\Omega)$ and $Q\in\mathcal{M}_{S}(\Omega)$, we say
$Q_{n}\rightarrow Q$ as $n\rightarrow\infty$ if and only if
\begin{equation}
\int fdQ_{n}\rightarrow\int fdQ,
\end{equation}
as $n\rightarrow\infty$ for any continuous and bounded $f$ and
\begin{equation}
\int N[0,1](\omega)Q_{n}(d\omega)\rightarrow\int N[0,1](\omega)Q(d\omega),
\end{equation}
as $n\rightarrow\infty$. In other words, the topology is the strengthened weak topology with the convergence of the first moment of $N[0,1]$.
For any $Q_{1}$, $Q_{2}$ in $\mathcal{M}_{S}(\Omega)$, one can define the metric $d(\cdot,\cdot)$ by
\begin{equation}
d(Q_{1},Q_{2})=d_{p}(Q_{1},Q_{2})+\left|\mathbb{E}^{Q_{1}}[N[0,1]]-\mathbb{E}^{Q_{2}}[N[0,1]]\right|,
\end{equation}
where $d_{p}(\cdot,\cdot)$ is the usual Prokhorov metric.
Because this is an unusual topology, the compactness is different
from that in the usual weak topology and also, later, when we prove the exponential tightness, we need to take some extra care.
See Lemma \ref{tightness} and (iii) of Lemma \ref{finalestimate}.

We denote by $C(\Omega)$ the set of real-valued continous functions on $\Omega$. We similarly define $C(\Omega\times\mathbb{R})$.
We also denote by $\mathcal{B}(\mathcal{F}^{-\infty}_{t})$ the set of all bounded $\mathcal{F}^{-\infty}_{t}$
progressively measurable and $\mathcal{F}^{-\infty}_{t}$ predictable functions.

Before we proceed, recall that a sequence $(P_{n})_{n\in\mathbb{N}}$ of probability measures on a topological space $X$ 
satisfies the large deviation principle (LDP) with rate function $I:X\rightarrow\mathbb{R}$ if $I$ is non-negative, 
lower semicontinuous and for any measurable set $A$,
\begin{equation}
-\inf_{x\in A^{o}}I(x)\leq\liminf_{n\rightarrow\infty}\frac{1}{n}\log P_{n}(A)
\leq\limsup_{n\rightarrow\infty}\frac{1}{n}\log P_{n}(A)\leq-\inf_{x\in\overline{A}}I(x).
\end{equation}
Here, $A^{o}$ is the interior of $A$ and $\overline{A}$ is its closure. See Dembo and Zeitouni \cite{Dembo} or Varadhan \cite{Varadhan} 
for general background regarding large deviations and their applications. 
Also Varadhan \cite{VaradhanII} has an excellent survey article on this subject.

Let us have a brief review on what is known about large deviations for Hawkes processes.

When $\lambda(\cdot)$ is linear, say $\lambda(z)=\nu+z$, then one can use immigration-birth representation, 
also known as Galton-Watson theory to study it. Under the immigration-birth representation, 
if the immigrants are distributed as Poisson process with intensity $\nu$ and each immigrant generates 
a cluster whose number of points is denoted by $S$, then $N_{t}$ is the total number of points generated 
in the clusters up to time $t$. If the process is ergodic, we have
\begin{equation}
\lim_{t\rightarrow\infty}\frac{N_{t}}{t}=\nu\mathbb{E}[S],\quad\text{a.s.}
\end{equation}
In particular, for linear Hawkes process with rate function $\lambda(z)=\nu+z$ and exciting function $h(\cdot)$ such 
that $\Vert h\Vert_{L^{1}}<1$, we have
\begin{equation}
\lim_{t\rightarrow\infty}\frac{N_{t}}{t}=\mu,\quad\text{a.s.},\quad\text{where $\mu:=\frac{\nu}{1-\Vert h\Vert_{L^{1}}}$.}
\end{equation}
Recently, Bacry et al. \cite{Bacry} proved a functional central limit theorem for linear multivariate Hawkes processes under certain assumptions.
That includes the linear Hawkes process as a special case and they proved that
\begin{equation}
\frac{N_{\cdot t}-\cdot\mu t}{\sqrt{t}}\rightarrow\sigma B(\cdot),\quad\text{as $t\rightarrow\infty$,}
\end{equation}
where $B(\cdot)$ is a standard Brownian motion. The convergence
is weak convergence on $D[0,1]$, the space of c\'{a}dl\'{a}g functions on $[0,1]$, equipped with Skorokhod topology.

Bordenave and Torrisi \cite{Bordenave} proves that if $0<\int_{0}^{\infty}h(t)dt<1$ and $\int_{0}^{\infty}th(t)dt<\infty$, 
then $\left(\frac{N_{t}}{t}\in\cdot\right)$ satisfies the large deviation principle with the good rate function
\begin{equation}
I(x)=
\begin{cases}
x\log\left(\frac{x}{\nu+x\Vert h\Vert_{L^{1}}}\right)-x+x\Vert h\Vert_{L^{1}}+\nu &\text{if $x\in[0,\infty)$}
\\
+\infty &\text{otherwise}
\end{cases}.
\end{equation}

When $\lambda(\cdot)$ is linear, Zhu \cite{Zhu} gives an alternative proof for the large deviation principle for $(N_{t}/t\in\cdot)$.

Once the LDP for $\left(\frac{N_{t}}{t}\in\cdot\right)$ is established, it is easy to study the ruin probability. 
Stabile and Torrisi \cite{Stabile} considered risk processes with non-stationary Hawkes claims arrivals and studied 
the asymptotic behavior of infinite and finite horizon ruin probabilities under light-tailed conditions on the claims.

When $\lambda(\cdot)$ is nonlinear, Br\'{e}maud and Massouli\'{e} \cite{Bremaud}
studied the stability results and once you have ergodicity, the ergodic theorem
automatically implies the law of large numbers. Recently, Zhu \cite{ZhuII} proved
a functional central limit theorem for nonlinear Hawkes process.

For the LDP for nonlinear Hawkes processes, \cite{Zhu} obtained the large deviation results for $(N_{t}/t\in\cdot)$ 
for some special cases. He proved the case when $h(\cdot)$ is exponential first, then generalizes the proof to the case 
when $h(\cdot)$ is a sum of exponentials and finally uses that to prove the LDP for a special class of general Hawkes processes.
The Hawkes process is interesting, partly because it is not Markovian unless $h(\cdot)$ is exponential or sum of exponentials.
The methods in Zhu \cite{Zhu} relies on proving the LDP for Markovian case first and then approximate the general
case, i.e. general $h(\cdot)$ using the Markovian case. But it is very difficult to do so. Indeed, Zhu \cite{Zhu}
can only prove LDP for general $h(\cdot)$ when $\lim_{z\rightarrow\infty}\frac{\lambda(z)}{z^{\alpha}}=0$ for any $\alpha>0$.
Therefore, it is natural in our paper to consider proving the level-3 large deviation principle first and then use
the contraction principle to obtain the LDP for $(N_{t}/t\in\cdot)$. We also want to point out that in the case when $\lambda(\cdot)$
is linear, even if $h(\cdot)$ is general, the LDP for $(N_{t}/t\in\cdot)$ can still be established because the linear case can
be explicitly computed, which is not the case when $\lambda(\cdot)$ is nonlinear.

In the pioneering work by Donsker and Varadhan \cite{Donsker}, they obtained a level-3 large deviation result for 
certain stationary Markov processes. 

We would like to prove the large deviation principle for general Hawkes processes by proving a process-level, also known as 
level-3 large deviation principle first. We can then use the contraction principle to obtain the level-1 large deviation principle for $(N_{t}/t\in\cdot)$.

Let us define the empirical measure for the process as
\begin{equation}
R_{t,\omega}(A)=\frac{1}{t}\int_{0}^{t}\chi_{A}(\theta_{s}\omega_{t})ds,
\end{equation}
for any $A$, where $\omega_{t}(s)=\omega(s)$ for $0\leq s\leq t$ and $\omega_{t}(s+t)=\omega_{t}(s)$ for any $s$. 
Donsker and Varadhan \cite{Donsker} proved that in the case when $\Omega$ is a space of c\`{a}dl\`{a}g functions $\omega(\cdot)$ 
on $-\infty<t<\infty$ endowed with Skorohod topology and taking values in a Polish space $X$, under certain conditions, 
$P^{0,x}(R_{t,\omega}\in\cdot)$ satisfies a large deviation principle, where $P^{0,x}$ is a Markov process on $\Omega^{0}_{\infty}$ 
with initial value $x\in X$. The rate function $H(Q)$ is some entropy function.

Let $h(\alpha,\beta)_{\Sigma}$ be the relative entropy of $\alpha$ with respect to $\beta$ restricted to the $\sigma$-algebra $\Sigma$. 
For any $Q\in\mathcal{M}_{S}(\Omega)$, let $Q^{\omega^{-}}$ be the regular conditional probability distribution of $Q$. 
Similarly we define $P^{\omega^{-}}$.

Let us define the entropy function $H(Q)$ as
\begin{equation}
H(Q)=
\mathbb{E}^{Q}[h(Q^{\omega^{-}},P^{\omega^{-}})_{\mathcal{F}^{0}_{1}}].
\end{equation}
Notice that $P^{\omega^{-}}$ is the Hawkes process conditional on the past history $\omega^{-}$. 
It has rate $\lambda(\sum_{\tau\in\omega[0,s)\cup\omega^{-}}h(s-\tau))$ at time $0\leq s\leq 1$, 
which is well defined for almost every $\omega^{-}$ under $Q$ if $\mathbb{E}^{Q}[N[0,1]]<\infty$ 
since $\mathbb{E}^{Q}[\sum_{\tau\in\omega^{-}}h(-\tau)]=\Vert h\Vert_{L^{1}}\mathbb{E}^{Q}[N[0,1]]<\infty$ 
implies $\sum_{\tau\in\omega^{-}}h(s-\tau)\leq\sum_{\tau\in\omega^{-}}h(-\tau)<\infty$ for all $0\leq s\leq 1$.

When $H(Q)<\infty$, $h(Q^{\omega^{-}},P^{\omega^{-}})<\infty$ for a.e. $\omega^{-}$ under $Q$, which implies that 
$Q^{\omega^{-}}\ll P^{\omega^{-}}$ on $\mathcal{F}^{0}_{1}$. By the theory of absolute continuity of point processes, 
see for example Chapter 19 of Lipster and Shiryaev \cite{Lipster} or Chapter 13 of Daley and Vere-Jones \cite{Daley}, 
the compensator of $Q^{\omega^{-}}$ is absolutely continuous, i.e. it has some density $\hat{\lambda}$ say, such that by the Girsanov formula,
\begin{align}
H(Q)&=\int_{\Omega^{-}}\int\left[\int_{0}^{1}\left(\lambda-\hat{\lambda}\right)ds+\int_{0}^{1}
\log(\hat{\lambda}/\lambda)dN_{s}\right]dQ^{\omega^{-}}Q(d\omega^{-})\label{HFunction}
\\
&=\int_{\Omega}\left[\int_{0}^{1}\lambda(\omega,s)-\hat{\lambda}(\omega,s)
+\log\left(\frac{\hat{\lambda}(\omega,s)}{\lambda(\omega,s)}\right)\hat{\lambda}ds\right]Q(d\omega)\nonumber,
\end{align}
where $\lambda=\lambda\left(\sum_{\tau\in\omega[0,s)\cup\omega^{-}}h(s-\tau)\right)$. Both $\lambda$ and $\hat{\lambda}$ 
are $\mathcal{F}^{-\infty}_{s}$-predictable for $0\leq s\leq 1$. For the equality in \eqref{HFunction}, we 
used the fact that $N_{t}-\int_{0}^{t}\hat{\lambda}(\omega,s)ds$ is a martingale under $Q$ and
for any $f(\omega,s)$ which is bounded, $\mathcal{F}^{-\infty}_{s}$ progressively measurable and predictable, we have
\begin{equation}
\int_{\Omega}\int_{0}^{1}f(\omega,s)dN_{s}Q(d\omega)=\int_{\Omega}\int_{0}^{1}f(\omega,s)\hat{\lambda}(\omega,s)dsQ(d\omega).
\end{equation}
We will use the above fact repeatedly in our paper.

The following theorem is the main result of this paper.

\begin{theorem}
For any open set $G\subset\mathcal{M}_{S}(\Omega)$,
\begin{equation}
\liminf_{t\rightarrow\infty}\frac{1}{t}\log P\left(R_{t,\omega}\in G\right)\geq -\inf_{Q\in G}H(Q),
\end{equation}
and for any closed set $C\subset\mathcal{M}_{S}(\Omega)$,
\begin{equation}
\limsup_{t\rightarrow\infty}\frac{1}{t}\log P\left(R_{t,\omega}\in C\right)\leq -\inf_{Q\in C}H(Q).
\end{equation}
\end{theorem}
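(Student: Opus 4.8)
The plan is to prove the lower and upper bounds separately, in each case going through a change of measure driven by the Girsanov formula \eqref{HFunction}, and to reduce the process-level statement to applications of the ergodic theorem together with a variational identification of $H$.

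\emph{Lower bound.} Fix $Q\in\mathcal M_S(\Omega)$ and an open neighborhood $G\ni Q$; we may assume $H(Q)<\infty$, so that for $Q$-a.e.\ $\omega^-$ we have $Q^{\omega^-}\ll P^{\omega^-}$ on $\mathcal F^0_1$ and, by stationarity and the chain rule for conditional relative entropy, on every $\mathcal F^0_t$, with density the Girsanov exponential determined by the compensator $\hat\lambda$. I would first treat ergodic $Q$. Disintegrating over the past and changing measure fiberwise,
\begin{equation}
P(R_{t,\omega}\in G)\ \ge\ \mathbb E^Q\!\left[\mathbf 1_{\{R_{t,\omega}\in G\}}\exp\!\left(-\int_0^t\log(\hat\lambda/\lambda)\,dN_s+\int_0^t(\hat\lambda-\lambda)\,ds\right)\right].
\end{equation}
By the ergodic theorem under $Q$ the exponent in this display divided by $t$ converges $Q$-a.s.\ to $-H(Q)$ (it is an additive functional of the stationary flow, integrable because $\mathbb E^Q[N[0,1]]<\infty$, $\hat\lambda$ is $Q$-integrable and $\log\lambda$ grows sublinearly), while simultaneously $R_{t,\omega}\to Q$ weakly and $N_t/t\to\mathbb E^Q[N[0,1]]$, hence $R_{t,\omega}\to Q$ in the strengthened topology. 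Restricting the expectation to the event of $Q$-probability at least $\tfrac12$ on which $R_{t,\omega}\in G$ and the exponent exceeds $-t(H(Q)+\varepsilon)$ gives $P(R_{t,\omega}\in G)\ge\tfrac12 e^{-t(H(Q)+\varepsilon)}$, so $\liminf_{t}\tfrac1t\log P(R_{t,\omega}\in G)\ge-H(Q)-\varepsilon$; now let $\varepsilon\downarrow0$. For a general $Q$ with $H(Q)<\infty$ I would invoke the ergodic decomposition $Q=\int Q_\xi\,\rho(d\xi)$ together with the affinity of $H$ (inherited from that of conditional relative entropy), approximating $Q$ inside $G$ by finite convex combinations of ergodic measures and using the lower semicontinuity of $H$; the ergodic-component bound then propagates to $Q$.

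\emph{Upper bound.} Two ingredients are needed. The first is exponential tightness in $\mathcal M_S(\Omega)$: since that topology also records $\mathbb E^{R_{t,\omega}}[N[0,1]]=N_t/t$, beyond the usual weak-tightness estimate for the configurations one must show $\limsup_t\tfrac1t\log P(N_t\ge ct)\to-\infty$ as $c\to\infty$, which follows from the exponential martingale $\exp(\theta N_t-\int_0^t(e^\theta-1)\lambda_s\,ds)$, the domination $\lambda_s\le\lambda(0)+\alpha\sum_{\tau<s}h(s-\tau)$ and the sublinear growth $\lambda(z)/z\to0$; this is the content of Lemma \ref{tightness} and part (iii) of Lemma \ref{finalestimate}. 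The second is the local upper bound: for bounded $\mathcal F^{-\infty}_s$-predictable $g$ the exponential $M^g_t=\exp(\int_0^t g\,dN_s-\int_0^t(e^g-1)\lambda_s\,ds)$ is a nonnegative $P$-martingale of mean one, so $1=\mathbb E^P[M^g_t]\ge\mathbb E^P[M^g_t;\,R_{t,\omega}\in A]$; if $g$ depends on only finitely much of the past, then $\tfrac1t\big(\int_0^t g\,dN_s-\int_0^t(e^g-1)\lambda_s\,ds\big)$ differs by $o(1)$ from an additive functional $\langle\Phi_g,R_{t,\omega}\rangle$ of the empirical measure, which yields $\tfrac1t\log P(R_{t,\omega}\in A)\le-\inf_{Q\in A}\langle\Phi_g,Q\rangle+o(1)$ for $A$ a small neighborhood. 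Because $\lambda_s=\lambda(\sum_{\tau<s}h(s-\tau))$ is genuinely non-local, $g$ and the whole comparison must be built after truncating $h$ to $[0,T]$ and then controlling the tail $\int_T^\infty h$; doing this uniformly enough to survive the subsequent optimization over $g$ is, I expect, the main technical obstacle of the paper. Granting it, covering a compact $K$ by finitely many such neighborhoods and passing to the supremum over $g$ via a minimax argument gives
\begin{equation}
\limsup_{t}\tfrac1t\log P(R_{t,\omega}\in K)\le-\inf_{Q\in K}\ \sup_{g}\ \mathbb E^Q\!\left[\int_0^1 g\,dN_s-\int_0^1(e^g-1)\lambda\,ds\right],
\end{equation}
and the proof closes with the variational identity $H(Q)=\sup_g\mathbb E^Q\big[\int_0^1 g\,dN_s-\int_0^1(e^g-1)\lambda\,ds\big]$ --- the Donsker--Varadhan (Legendre-transform) characterization of the relative entropy between two point processes with predictable intensities. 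Finally, exponential tightness upgrades the compact-set bound to arbitrary closed sets.
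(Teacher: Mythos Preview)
Your upper-bound sketch is essentially the paper's strategy: exponential martingales indexed by bounded predictable test functions, the variational formula for $H$ (Lemma~\ref{variational}), exponential tightness in the strengthened topology (Lemma~\ref{tightness} and Lemma~\ref{finalestimate}), and a truncation of the memory to pass from $\mathcal F^{-\infty}_s$- to $\mathcal F^0_s$-measurable test functions. You have correctly located the main technical difficulty there.

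The lower bound, however, has a genuine gap at the change-of-measure step. Unwinding your displayed inequality, what you are asserting is
\[
P^{\emptyset}(R_{t,\omega}\in G)\ \ge\ \int_{\Omega^-} P^{\omega^-}(R_{t,\omega}\in G)\,Q(d\omega^-),
\]
since $\mathbb E^{Q}\big[\mathbf 1_{G}\,\frac{dP^{\omega^-}}{dQ^{\omega^-}}\big|_{\mathcal F^0_t}\big]=\int P^{\omega^-}(R_{t,\omega}\in G)\,Q(d\omega^-)$. But $P=P^{\emptyset}$ starts from the \emph{empty} past, whereas the right-hand side averages over the typical past of $Q$, which is a.s.\ nonempty. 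There is no a priori inequality between $P^{\emptyset}$ and $P^{\omega^-}$ for a nonlinear Hawkes process; the memory is real, and the two laws differ by a Girsanov factor that must itself be controlled. The paper supplies exactly this missing comparison in Lemma~\ref{mainlower}: one constructs sets $K^-_\ell\ni\emptyset$ of controlled pasts with $Q(K^-_\ell)\to1$ and shows, using $\int_0^\infty h<\infty$ and the Lipschitz bound on $\lambda$, that $\log\frac{dP^{\omega^-}}{dP^{w_0}}\big|_{\mathcal F^0_t}\le C_1(\ell)+C_2(\ell)\,o(t)$ uniformly for $w_0,\omega^-\in K^-_\ell$. Only after inserting this extra factor can the fiberwise tilt from $P^{\omega^-}$ to $Q^{\omega^-}$ be anchored at the actual initial condition $\emptyset$.

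The same uniform-in-initial-condition estimate is what makes the passage from ergodic to general $Q$ work. The paper does not argue by approximation and semicontinuity (note that lower semicontinuity of $H$ gives $H(Q)\le\liminf H(Q_n)$, which points the wrong way for a lower bound); instead it writes $Q=\sum_j\alpha_jQ_j$, splits $[0,t]$ into blocks of lengths $\alpha_jt$, and on each block applies Lemma~\ref{mainlower} with the initial history $w_{t_{j-1}}$ inherited from the previous block. This concatenation is possible precisely because Lemma~\ref{mainlower} is stated uniformly over $w_0\in K^-_\ell$ and produces an endpoint again in $K^-_\ell$. Your proposal does not contain a mechanism of this kind.
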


We will prove the lower bound in Section \ref{lowerbound}, the upper bound in Section \ref{upperbound} and the 
superexponential estimates that are needed in the proof of the upper bound in Section \ref{superexpestimates}.

Once we establish the level-3 large deviation result, we can obtain the large deviation principle for $(N_{t}/t\in\cdot)$ 
directly by using the contraction principle.

\begin{theorem}
$(N_{t}/t\in\cdot)$ satisfies a large deviation principle with the rate function $I(\cdot)$ given by
\begin{equation}
I(x)=\inf_{Q\in\mathcal{M}_{S}(\Omega),\mathbb{E}^{Q}[N[0,1]]=x}H(Q).
\end{equation}
\end{theorem}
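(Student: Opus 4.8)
The plan is to deduce this from the process-level large deviation principle (the preceding theorem) together with the contraction principle. The key observation is that $N_t/t$ is, up to a negligible correction, a \emph{continuous} image of the empirical measure $R_{t,\omega}$ under the functional $\Psi\colon\mathcal{M}_S(\Omega)\to[0,\infty)$ defined by $\Psi(Q)=\mathbb{E}^Q[N[0,1]]$. Continuity of $\Psi$ is immediate from the definition of the topology on $\mathcal{M}_S(\Omega)$: convergence $Q_n\to Q$ there was \emph{defined} to include $\mathbb{E}^{Q_n}[N[0,1]]\to\mathbb{E}^{Q}[N[0,1]]$, which is precisely the assertion that $\Psi$ is continuous. (This is exactly why the topology was strengthened: $N[0,1]$ is neither bounded nor continuous for the plain vague topology.) First I would verify that $R_{t,\omega}$ indeed lies in $\mathcal{M}_S(\Omega)$ almost surely: it is $\theta$-invariant because of the $t$-periodization defining $\omega_t$, it is supported on simple configurations since each $\theta_s\omega_t$ is simple, and $\mathbb{E}^{R_{t,\omega}}[N[0,1]]<\infty$ a.s., as it equals $N_t/t$ by the computation below.

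Next I would carry out the elementary identification
\[
\Psi(R_{t,\omega})=\int N[0,1](\eta)\,R_{t,\omega}(d\eta)=\frac{1}{t}\int_0^t N[s,s+1](\omega_t)\,ds=\frac{N_t}{t},
\]
where the middle equality uses $N[0,1](\theta_s\omega_t)=N[s,s+1](\omega_t)$ and the last uses that, since the period $t$ is at least $1$, each of the $N_t$ points of $\omega_t$ in $[0,t)$ is counted in $N[s,s+1](\omega_t)$ for a set of $s\in[0,t]$ of Lebesgue measure exactly $1$. (If one prefers to fuss over the endpoint conventions in $\omega_t$, the two sides differ by at most $O(1/t)$, so that $\Psi(R_{t,\omega})$ and $N_t/t$ are exponentially equivalent and hence interchangeable for the LDP.) With this in hand, the contraction principle (see Dembo and Zeitouni \cite{Dembo}) applied to the continuous map $\Psi$ and to the process-level LDP transfers the principle to the law of $\Psi(R_{t,\omega})=N_t/t$, with rate function
\[
I(x)=\inf_{Q\in\mathcal{M}_S(\Omega):\,\Psi(Q)=x}H(Q)=\inf_{Q\in\mathcal{M}_S(\Omega):\,\mathbb{E}^Q[N[0,1]]=x}H(Q),
\]
which is the asserted formula.

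For a clean write-up I would also record that $I$ is a good rate function and that the infimum defining $I(x)$ is attained whenever it is finite: since $H$ is a good rate function (part of the content of the process-level result, via the exponential tightness of Lemma~\ref{tightness}), each sub-level set $\{H\le c\}$ is compact in $\mathcal{M}_S(\Omega)$, and its image under the continuous map $\Psi$ is then compact in $[0,\infty)$. The only genuinely delicate point is the bookkeeping of the first paragraph: one must make sure that $R_{t,\omega}$ really is an element of $\mathcal{M}_S(\Omega)$ and that the functional whose continuity is invoked, $Q\mapsto\mathbb{E}^Q[N[0,1]]$, is continuous for \emph{the very topology} in which the process-level LDP was proved. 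Both hold by construction of the topology on $\mathcal{M}_S(\Omega)$, so once these are pinned down the theorem is immediate.
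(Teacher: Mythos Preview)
Your approach is the same as the paper's: apply the contraction principle to the continuous map $\Psi(Q)=\mathbb{E}^{Q}[N[0,1]]$, whose continuity is built into the topology on $\mathcal{M}_{S}(\Omega)$. The one genuine difference is in how you relate $\Psi(R_{t,\omega})$ to $N_{t}/t$. You use the exact identity $\Psi(R_{t,\omega})=N_{t}/t$ for $t\ge 1$, obtained by the Fubini argument that each periodic orbit of a point in $\omega_{t}$ contributes total mass $1$ to $\int_{0}^{t}N[s,s+1](\omega_{t})\,ds$; this is correct and makes the deduction immediate. The paper instead bounds $\big|\Psi(R_{t,\omega})-N_{t}/t\big|$ by $(N[t-1,t]+N_{1})/t$ and then invokes the superexponential estimate of Lemma~\ref{indfinite} to show these boundary terms are negligible. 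Your route is cleaner and avoids that extra appeal to Lemma~\ref{indfinite}; the paper's route is more robust to endpoint conventions but does a little more work than necessary. Your remarks on goodness of $I$ and attainment of the infimum are correct (they follow from exponential tightness, Lemmas~\ref{tightness} and~\ref{finalestimate}), though the paper does not record them explicitly.
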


\begin{proof}
Since $Q\mapsto\mathbb{E}^{Q}[N[0,1]]$ is continuous, $\int_{\Omega}N[0,1]dR_{t,\omega}$ satisfies a large deviation principle 
with the rate function $I(\cdot)$ by the contraction principle. (For a discussion on contraction principle, 
see for example Varadhan \cite{Varadhan}.)
\begin{align}
\int_{\Omega}N[0,1]dR_{t,\omega}&=\frac{1}{t}\int_{0}^{t}N[0,1](\theta_{s}\omega_{t})ds
\\
&=\frac{1}{t}\int_{0}^{t-1}N[s,s+1](\omega)ds+\frac{1}{t}\int_{t-1}^{t}N[s,s+1](\omega_{t})ds.\nonumber
\end{align}
Notice that
\begin{equation}
0\leq\frac{1}{t}\int_{t-1}^{t}N[s,s+1](\omega_{t})ds\leq\frac{1}{t}(N[t-1,t](\omega)+N[0,1](\omega)),
\end{equation}
and
\begin{equation}
\frac{1}{t}\int_{0}^{t-1}N[s,s+1](\omega)ds=\frac{1}{t}\left[\int_{t-1}^{t}N_{s}(\omega)ds-\int_{0}^{1}N_{s}(\omega)ds\right]
\leq\frac{N_{t}}{t},
\end{equation}
and $\frac{1}{t}\int_{0}^{t-1}N[s,s+1](\omega)ds\geq\frac{N_{t-1}-N_{1}}{t}=\frac{N_{t}}{t}-\frac{N[t-1,t]+N_{1}}{t}$. Hence,
\begin{equation}
\frac{N_{t}}{t}-\frac{N[t-1,t]+N_{1}}{t}\leq\int_{\Omega}N[0,1]dR_{t,\omega}\leq\frac{N_{t}}{t}+\frac{N[t-1,t]+N_{1}}{t}.
\end{equation}
For the lower bound, for any open ball $B_{\epsilon}(x)$ centered at $x$ with radius $\epsilon>0$,
\begin{align}
P\left(\frac{N_{t}}{t}\in B_{\epsilon}(x)\right)&\geq P\left(\int_{\Omega}N[0,1]dR_{t,\omega}\in B_{\epsilon/2}(x)\right)
\\
&-P\left(\frac{N[t-1,t]}{t}\geq\frac{\epsilon}{4}\right)-P\left(\frac{N_{1}}{t}\geq\frac{\epsilon}{4}\right).\nonumber
\end{align}
For the upper bound, for any closed set $C$ and $C^{\epsilon}=\bigcup_{x\in C}\overline{B_{\epsilon}(x)}$,
\begin{align}
P\left(\frac{N_{t}}{t}\in C\right)&\leq P\left(\int_{\Omega}N[0,1]dR_{t,\omega}\in C^{\epsilon}\right)
\\
&+P\left(\frac{N[t-1,t]}{t}\geq\frac{\epsilon}{4}\right)+P\left(\frac{N_{1}}{t}\geq\frac{\epsilon}{4}\right).\nonumber
\end{align}
Finally, by Lemma \ref{indfinite}, we have the following superexponential estimates
\begin{equation}
\limsup_{t\rightarrow\infty}\frac{1}{t}\log P\left(\frac{N[t-1,t]}{t}\geq\frac{\epsilon}{4}\right)
=\limsup_{t\rightarrow\infty}\frac{1}{t}\log P\left(\frac{N_{1}}{t}\geq\frac{\epsilon}{4}\right)=-\infty.
\end{equation}
Hence, for the lower bound, we have
\begin{equation}
\liminf_{t\rightarrow\infty}\frac{1}{t}\log P\left(\frac{N_{t}}{t}\in B_{\epsilon}(x)\right)\geq -I(x),
\end{equation}
and for the upper bound, we have
\begin{equation}
\limsup_{t\rightarrow\infty}\frac{1}{t}\log P\left(\frac{N_{t}}{t}\in C\right)\leq-\inf_{x\in C^{\epsilon}}I(x),
\end{equation}
which holds for any $\epsilon>0$. Letting $\epsilon\downarrow 0$, we get the desired result.
\end{proof}

\section{Lower Bound}\label{lowerbound}

\begin{lemma}\label{positivity}
For any $\lambda,\hat{\lambda}\geq 0$, $\lambda-\hat{\lambda}+\hat{\lambda}\log(\hat{\lambda}/\lambda)\geq 0$.
\end{lemma}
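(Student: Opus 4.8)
The plan is to reduce this to the elementary scalar inequality $x\log x - x + 1 \geq 0$ for all $x \geq 0$, with the usual convention $0\log 0 = 0$. First I would record this auxiliary fact: setting $g(x) := x\log x - x + 1$, we have $g(1) = 0$ and $g'(x) = \log x$, so $g$ is strictly decreasing on $(0,1)$ and strictly increasing on $(1,\infty)$; hence $g(x) \geq g(1) = 0$ for every $x > 0$, and $g(0) = 1 \geq 0$ by the stated convention. (Equivalently, this is just the nonnegativity of relative entropy on a two-point space, or a consequence of convexity of $x\log x$.)

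Next I would dispose of the degenerate cases in the statement of the lemma. If $\hat{\lambda} = 0$, then the left-hand side equals $\lambda \geq 0$, again using $0\log 0 = 0$. If $\hat{\lambda} > 0$ and $\lambda = 0$, then $\log(\hat{\lambda}/\lambda) = +\infty$ and the whole expression is $+\infty$, so the inequality holds trivially.

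Finally, in the remaining case $\lambda, \hat{\lambda} > 0$, I would apply the scalar inequality with $x = \hat{\lambda}/\lambda$: multiplying $g(\hat{\lambda}/\lambda) \geq 0$ through by $\lambda > 0$ gives precisely $\hat{\lambda}\log(\hat{\lambda}/\lambda) - \hat{\lambda} + \lambda \geq 0$, which is the claim. Alternatively, one can avoid calculus altogether by invoking $\log y \leq y - 1$ for $y > 0$ with $y = \lambda/\hat{\lambda}$, which yields $\hat{\lambda}\log(\hat{\lambda}/\lambda) = -\hat{\lambda}\log(\lambda/\hat{\lambda}) \geq -\hat{\lambda}(\lambda/\hat{\lambda} - 1) = \hat{\lambda} - \lambda$, and one rearranges. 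There is essentially no obstacle in this lemma; the only point deserving a word of care is the bookkeeping at the boundary values $\lambda = 0$ and $\hat{\lambda} = 0$, which is why I would treat them separately rather than fold them into the convexity argument.
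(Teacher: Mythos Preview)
Your proof is correct and follows essentially the same approach as the paper: reduce to a one-variable convexity inequality and check it by elementary calculus. The paper factors out $\hat{\lambda}$ and studies $F(x)=x-1-\log x$ with $x=\lambda/\hat{\lambda}$, whereas you factor out $\lambda$ and study $g(x)=x\log x-x+1$ with $x=\hat{\lambda}/\lambda$; these are dual substitutions (indeed $g(x)=xF(1/x)$), and your ``alternative'' via $\log y\leq y-1$ is exactly the paper's route. If anything, your version is slightly cleaner in that you handle the boundary cases $\lambda=0$ and $\hat{\lambda}=0$ explicitly, which the paper glosses over.
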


\begin{proof}
Write $\lambda-\hat{\lambda}+\hat{\lambda}\log(\hat{\lambda}/\lambda)
=\hat{\lambda}\left[(\lambda/\hat{\lambda})-1-\log(\lambda/\hat{\lambda})\right]$. 
Thus, it is sufficient to show that $F(x)=x-1-\log x\geq 0$ for any $x\geq 0$. 
Note that $F(0)=F(\infty)=0$ and $F'(x)=1-\frac{1}{x}<0$ when $0<x<1$ and $F'(x)>0$ 
when $x>1$ and finally $F(1)=0$. Hence $F(x)\geq 0$ for any $x\geq 0$.
\end{proof}

\begin{lemma}\label{finitemean}
Assume $H(Q)<\infty$. Then,
\begin{equation}
\mathbb{E}^{Q}[N[0,1]]\leq C_{1}+C_{2}H(Q),
\end{equation}
where $C_{1},C_{2}>0$ are some constants independent of $Q$.
\end{lemma}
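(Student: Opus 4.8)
The plan is to route everything through the compensator density $\hat{\lambda}$: use the martingale identity to rewrite $\mathbb{E}^{Q}[N[0,1]]$ as an integral of $\hat{\lambda}$, bound $\hat{\lambda}$ pointwise by the entropy integrand plus a multiple of $\lambda$, and then close the loop with the sublinearity of $\lambda(\cdot)$. First, applying the martingale relation for $N_{t}-\int_{0}^{t}\hat{\lambda}(\omega,s)\,ds$ under $Q$ at $t=1$ (equivalently the displayed identity with $f\equiv1$), one has $\mathbb{E}^{Q}[N[0,1]]=\mathbb{E}^{Q}\bigl[\int_{0}^{1}\hat{\lambda}(\omega,s)\,ds\bigr]$. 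Next I would record the elementary inequality
\[
b\leq\bigl(a-b+b\log(b/a)\bigr)+(e-1)a\qquad\text{for all }a,b\geq0,
\]
which is the Poisson Legendre inequality $\theta b\leq\bigl(a-b+b\log(b/a)\bigr)+a(e^{\theta}-1)$ specialized to $\theta=1$; writing $b=ac$ it reduces to $c\log c-2c+e\geq0$, which holds because the left side is convex with minimum value $0$ attained at $c=e$ (and $a=0$ is trivial by Lemma \ref{positivity}). Taking $a=\lambda(\omega,s)$, $b=\hat{\lambda}(\omega,s)$, integrating $ds$ over $[0,1]$ and then $Q(d\omega)$, and recognizing $H(Q)$ via \eqref{HFunction}, I obtain
\[
\mathbb{E}^{Q}[N[0,1]]\leq H(Q)+(e-1)\,\mathbb{E}^{Q}\!\left[\int_{0}^{1}\lambda(\omega,s)\,ds\right].
\]

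It remains to absorb the last term, and this is where the standing assumptions on $\lambda(\cdot)$ come in. Since $\lambda(\cdot)$ is increasing with $\lim_{z\to\infty}\lambda(z)/z=0$, for each $\epsilon>0$ there is a constant $K_{\epsilon}$ with $\lambda(z)\leq\epsilon z+K_{\epsilon}$ for all $z\geq0$, hence $\lambda(\omega,s)=\lambda\bigl(\sum_{\tau<s}h(s-\tau)\bigr)\leq\epsilon\sum_{\tau<s}h(s-\tau)+K_{\epsilon}$. Taking $\mathbb{E}^{Q}$, using stationarity of $Q$ together with the identity $\mathbb{E}^{Q}[\sum_{\tau\in\omega^{-}}h(-\tau)]=\Vert h\Vert_{L^{1}}\mathbb{E}^{Q}[N[0,1]]$ recalled in the Introduction, gives $\mathbb{E}^{Q}\bigl[\int_{0}^{1}\lambda(\omega,s)\,ds\bigr]\leq\epsilon\Vert h\Vert_{L^{1}}\mathbb{E}^{Q}[N[0,1]]+K_{\epsilon}$. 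Substituting this in and choosing $\epsilon=\tfrac{1}{2(e-1)\Vert h\Vert_{L^{1}}}$, so that the coefficient of $\mathbb{E}^{Q}[N[0,1]]$ on the right becomes $\tfrac12$, and noting that $\mathbb{E}^{Q}[N[0,1]]<\infty$ since $Q\in\mathcal{M}_{S}(\Omega)$, I can move that term to the left and conclude $\mathbb{E}^{Q}[N[0,1]]\leq 2H(Q)+2(e-1)K_{\epsilon}$, which is the assertion with $C_{2}=2$ and $C_{1}=2(e-1)K_{\epsilon}$, both independent of $Q$.

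Apart from the routine computations, the only delicate point — and the one I expect to be the real obstacle — is the final absorption step: it works precisely because sublinearity of $\lambda(\cdot)$ makes the coefficient of $\mathbb{E}^{Q}[N[0,1]]$ coming from the $\lambda$ term arbitrarily small (for a merely linear $\lambda$ one would instead have to optimize over $\theta$ in the entropy inequality and impose smallness of $\Vert h\Vert_{L^{1}}$). One must also be sure that the finiteness $\mathbb{E}^{Q}[N[0,1]]<\infty$ is in hand before subtracting, which is legitimate since $Q\in\mathcal{M}_{S}(\Omega)$ is built into the definition of $H$.
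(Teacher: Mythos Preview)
Your proof is correct and is in fact cleaner than the paper's. Both arguments start from $\mathbb{E}^{Q}[N[0,1]]=\mathbb{E}^{Q}\bigl[\int_{0}^{1}\hat{\lambda}\,ds\bigr]$ and finish by absorbing an $\mathbb{E}^{Q}\bigl[\int_{0}^{1}\lambda\,ds\bigr]$ term using sublinearity of $\lambda(\cdot)$ together with stationarity; the difference is in how $\hat{\lambda}$ is controlled by the entropy integrand. The paper splits the $(\omega,s)$-space into $\{\hat{\lambda}<K\lambda\}$ and $\{\hat{\lambda}\geq K\lambda\}$, bounding $\hat{\lambda}$ by $K\lambda$ on the first piece and invoking the nonnegativity of the integrand (Lemma~\ref{positivity}) on the second to extract a factor $(\log K-1)$. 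You instead use the single pointwise Legendre inequality $\hat{\lambda}\leq\bigl(\lambda-\hat{\lambda}+\hat{\lambda}\log(\hat{\lambda}/\lambda)\bigr)+(e-1)\lambda$, which does the whole job in one stroke and yields the explicit constant $C_{2}=2$. Your bound $\mathbb{E}^{Q}\bigl[\int_{0}^{1}\lambda\,ds\bigr]\leq\epsilon\Vert h\Vert_{L^{1}}\mathbb{E}^{Q}[N[0,1]]+K_{\epsilon}$ via the stationary intensity identity is also tighter than the paper's, which splits the sum over $\tau<s$ into $\tau\in[0,s)$ and $\tau<0$ and picks up an extra $h(0)$ in the coefficient. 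Both proofs rely on the a~priori finiteness $\mathbb{E}^{Q}[N[0,1]]<\infty$ (from $Q\in\mathcal{M}_{S}(\Omega)$) to justify the final subtraction, as you correctly flag.
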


\begin{proof}
If $H(Q)<\infty$, then $h(Q^{\omega^{-}},P^{\omega^{-}})_{\mathcal{F}^{0}_{1}}<\infty$ for a.e. $\omega^{-}$ under $Q$, 
which implies that $Q^{\omega^{-}}\ll P^{\omega^{-}}$ and thus $\hat{A}_{t}\ll A_{t}$, where $\hat{A}_{t}$ and $A_{t}$ 
are the compensators of $N_{t}$ under $Q^{\omega^{-}}$ and $P^{\omega^{-}}$ respectively. 
(For the theory of absolute continuity of point processes and Girsanov formula, 
see for example Lipster and Shiryaev \cite{Lipster} or Daley and Vere-Jones \cite{Daley}.) 
Since $A_{t}=\int_{0}^{t}\lambda(\omega,s)ds$, we have $\hat{A}_{t}=\int_{0}^{t}\hat{\lambda}(\omega,s)ds$ for some $\hat{\lambda}$. 
By the Girsanov formula,
\begin{equation}
H(Q)=\mathbb{E}^{Q}\left[\int_{0}^{1}\lambda-\hat{\lambda}+\log\left(\hat{\lambda}/\lambda\right)\hat{\lambda}ds\right].
\end{equation}
Notice that $\mathbb{E}^{Q}[N[0,1]]=\int\int_{0}^{1}\hat{\lambda}dsdQ$.
\begin{align}
\int\int_{0}^{1}\lambda dsdQ&\leq\epsilon\int\int_{0}^{1}\sum_{\tau<s}h(s-\tau)dsdQ+C_{\epsilon}
\\
&\leq\epsilon\int h(0)N[0,1]dQ+\epsilon\int\sum_{\tau<0}h(-\tau)dQ+C_{\epsilon}\nonumber
\\
&=\epsilon(h(0)+\Vert h\Vert_{L^{1}})\mathbb{E}^{Q}[N[0,1]]+C_{\epsilon}\nonumber
\\
&=\epsilon(h(0)+\Vert h\Vert_{L^{1}})\int\int_{0}^{1}\hat{\lambda}dsdQ+C_{\epsilon}.\nonumber
\end{align}
Therefore, we have
\begin{equation}
\int\int_{0}^{1}\hat{\lambda}\cdot 1_{\hat{\lambda}<K\lambda}dsdQ\leq K\epsilon(h(0)+\Vert h\Vert_{L^{1}})\int\int_{0}^{1}\hat{\lambda}dsdQ+KC_{\epsilon}.
\end{equation}
On the other hand, by Lemma \ref{positivity},
\begin{align}
H(Q)&\geq\int\int_{0}^{1}\left[\lambda-\hat{\lambda}+\hat{\lambda}\log(\hat{\lambda}/\lambda)\right]\cdot 1_{\hat{\lambda}\geq K\lambda}dsdQ
\\
&\geq(\log K-1)\int\int_{0}^{1}\hat{\lambda}\cdot 1_{\hat{\lambda}\geq K\lambda}dsdQ.\nonumber
\end{align}
Thus,
\begin{equation}
\int\int_{0}^{1}\hat{\lambda}dsdQ\leq K\epsilon(h(0)+\Vert h\Vert_{L^{1}})\int\int_{0}^{1}\hat{\lambda}dsdQ+KC_{\epsilon}+\frac{H(Q)}{\log K-1}.
\end{equation}
Choose $K>e$ and $\epsilon<\frac{1}{K(h(0)+\Vert h\Vert_{L^{1}})}$. We get
\begin{equation}
\mathbb{E}^{Q}[N[0,1]]\leq\frac{KC_{\epsilon}}{1-K\epsilon(h(0)+\Vert h\Vert_{L^{1}})}+\frac{H(Q)}{(\log K-1)K\epsilon(h(0)+\Vert h\Vert_{L^{1}})}.
\end{equation}
\end{proof}

\begin{lemma}\label{variational}
We have the following alternative expression for $H(Q)$.
\begin{equation}
H(Q)=\sup_{f(\omega,s)\in\mathcal{B}(\mathcal{F}^{-\infty}_{s})\cap C(\Omega\times\mathbb{R}),0\leq s\leq 1}
\mathbb{E}^{Q}\left[\int_{0}^{1}\lambda(1-e^{f})ds+\int_{0}^{1}fdN_{s}\right].
\end{equation}
\end{lemma}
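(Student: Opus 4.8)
The plan is to prove the two inequalities $\sup_{f}\Phi(f)\le H(Q)$ and $\sup_{f}\Phi(f)\ge H(Q)$ separately, where $\Phi(f):=\mathbb{E}^{Q}\left[\int_{0}^{1}\lambda(1-e^{f})ds+\int_{0}^{1}fdN_{s}\right]$ and the supremum is over $f\in\mathcal{B}(\mathcal{F}^{-\infty}_{s})\cap C(\Omega\times\mathbb{R})$, $0\le s\le 1$. Recall that for $Q\in\mathcal{M}_{S}(\Omega)$ one has $\mathbb{E}^{Q}[N[0,1]]<\infty$, so the conditional law $P^{\omega^{-}}$ is well defined for $Q$-a.e.\ $\omega^{-}$.

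For the upper bound $\Phi(f)\le H(Q)$ I would work conditionally on the past $\omega^{-}$ (with $\omega^{-}$ frozen, the intensity $\lambda=\lambda(\sum_{\tau\in\omega[0,s)\cup\omega^{-}}h(s-\tau))$ is $\mathcal{F}^{0}_{s}$-predictable). For bounded predictable $f$ the process $Z_{t}:=\exp\left(\int_{0}^{t}fdN_{s}+\int_{0}^{t}\lambda(1-e^{f})ds\right)$ is a positive $\mathcal{F}^{0}_{t}$-martingale under $P^{\omega^{-}}$ — it is the Dol\'{e}ans exponential of the point-process martingale $\int_{0}^{t}(e^{f}-1)(dN_{s}-\lambda\,ds)$ — so $\mathbb{E}^{P^{\omega^{-}}}[Z_{1}]=1$ for $Q$-a.e.\ $\omega^{-}$. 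Applying the variational characterization of relative entropy (see, e.g., \cite{Dembo}), namely $\mathbb{E}^{Q^{\omega^{-}}}[X]\le h(Q^{\omega^{-}},P^{\omega^{-}})_{\mathcal{F}^{0}_{1}}+\log\mathbb{E}^{P^{\omega^{-}}}[e^{X}]$, to $X=\log Z_{1}=\int_{0}^{1}fdN_{s}+\int_{0}^{1}\lambda(1-e^{f})ds$ gives $\mathbb{E}^{Q^{\omega^{-}}}[X]\le h(Q^{\omega^{-}},P^{\omega^{-}})_{\mathcal{F}^{0}_{1}}$; integrating against $Q(d\omega^{-})$ yields $\Phi(f)\le H(Q)$, hence $\sup_{f}\Phi(f)\le H(Q)$. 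This direction requires no further hypotheses and is trivial when $H(Q)=\infty$.

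For the lower bound, first suppose $H(Q)<\infty$. Then $Q^{\omega^{-}}\ll P^{\omega^{-}}$ on $\mathcal{F}^{0}_{1}$ for $Q$-a.e.\ $\omega^{-}$, the compensator of $N$ under $Q^{\omega^{-}}$ has a density $\hat{\lambda}$, and by the Girsanov formula (see \cite{Lipster} or \cite{Daley}, and \eqref{HFunction}) the optimal choice is $f^{*}:=\log(\hat{\lambda}/\lambda)$, for which formally $\Phi(f^{*})=H(Q)$ since $\lambda(1-e^{f^{*}})=\lambda-\hat{\lambda}$; however $f^{*}$ is in general neither bounded nor continuous. I would first remove the unboundedness by truncation: put $f_{M}:=(-M)\vee(f^{*}\wedge M)$, which is bounded and predictable. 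Since $t\mapsto\lambda(1-e^{t})+t\hat{\lambda}$ is concave with maximum value $\lambda-\hat{\lambda}+\hat{\lambda}\log(\hat{\lambda}/\lambda)$ attained at $t=f^{*}$, this value and the value $0$ at $t=0$ both being nonnegative by Lemma \ref{positivity}, and the function being monotone on each side of $f^{*}$, one checks that $0\le\lambda(1-e^{f_{M}})+f_{M}\hat{\lambda}\uparrow\lambda-\hat{\lambda}+\hat{\lambda}\log(\hat{\lambda}/\lambda)$ as $M\to\infty$. Using $\mathbb{E}^{Q}[\int_{0}^{1}f_{M}dN_{s}]=\mathbb{E}^{Q}[\int_{0}^{1}f_{M}\hat{\lambda}\,ds]$ and monotone convergence then gives $\Phi(f_{M})\to H(Q)$. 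Finally I would remove the discontinuity: approximate each bounded predictable $f_{M}$ by bounded continuous predictable $f_{M,k}$ with $\Vert f_{M,k}\Vert_{\infty}\le M$ and $f_{M,k}\to f_{M}$ in $L^{1}$ with respect to both $Q(d\omega)\otimes ds$ and $Q(d\omega)\otimes\hat{\lambda}\,ds$; by continuity of $t\mapsto\lambda(1-e^{t})$, the uniform bound $M$, and the integrability $\mathbb{E}^{Q}[\int_{0}^{1}\lambda\,ds]<\infty$ and $\mathbb{E}^{Q}[N[0,1]]<\infty$ (Lemma \ref{finitemean}), dominated convergence gives $\Phi(f_{M,k})\to\Phi(f_{M})$. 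Combining, $\sup_{f}\Phi(f)\ge H(Q)$.

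It remains to handle $H(Q)=\infty$, where one must show $\sup_{f}\Phi(f)=\infty$. If $Q^{\omega^{-}}\ll P^{\omega^{-}}$ still holds for a.e.\ $\omega^{-}$ (so the divergence is only in the integral), the same truncation argument gives $\Phi(f_{M})\to\infty$. The genuine difficulty — which I expect to be the main obstacle of the proof — is the sub-case where $Q^{\omega^{-}}\not\ll P^{\omega^{-}}$ on a set of positive $Q$-measure, i.e.\ where the compensator of $N$ under $Q$ has a part singular with respect to $\int_{0}^{\cdot}\lambda\,ds$: there one has to exhibit bounded continuous predictable test functions $f$ — which cannot see the $\omega$-dependent location of the singular component — along which $\int_{0}^{1}f\,dN_{s}$ blows up while $\int_{0}^{1}\lambda(1-e^{f})ds$ stays controlled. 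This point, together with the filtration-compatible mollification used to pass from $f_{M}$ to $f_{M,k}$, are the delicate technical steps.
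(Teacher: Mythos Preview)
Your argument for $\sup_{f}\Phi(f)\le H(Q)$ via the Donsker--Varadhan variational formula for relative entropy is clean and correct; the paper instead first invokes Girsanov (assuming $Q^{\omega^{-}}\ll P^{\omega^{-}}$) and then uses the pointwise inequality $\hat{\lambda}f+(1-e^{f})\lambda\le\hat{\lambda}\log(\hat{\lambda}/\lambda)+\lambda-\hat{\lambda}$, which is equivalent but slightly less direct. Your lower bound in the absolutely continuous case, via truncation $f_{M}$ and monotone convergence followed by continuous approximation, is more carefully laid out than the paper's one-line ``find a sequence $f_{n}\to\log(\hat{\lambda}/\lambda)$ and use Fatou's lemma''; both are fine.

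The genuine gap is exactly the one you flag: the singular sub-case $Q^{\omega^{-}}\not\ll P^{\omega^{-}}$ on a set of positive $Q$-measure. You correctly identify it as the main obstacle but do not resolve it, and your heuristic (exhibit test functions that see the singular part of the compensator) is not the route the paper takes. The paper argues by contraposition: assume $\sup_{f}\Phi(f)<\infty$ and show $H(Q)<\infty$. The key trick is to \emph{regularize the reference measure}, not the test function. Let $\hat{A}_{t}$ be the compensator of $N$ under $Q^{\omega^{-}}$ and define $P^{\omega^{-}}_{\epsilon}$ to be the point process with compensator $A_{t}+\epsilon\hat{A}_{t}$; then automatically $\hat{A}_{t}\ll A_{t}+\epsilon\hat{A}_{t}$, hence $Q^{\omega^{-}}\ll P^{\omega^{-}}_{\epsilon}$, and the variational formula applies with $P^{\omega^{-}}_{\epsilon}$ in place of $P^{\omega^{-}}$. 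A short estimate (splitting $f\ge 0$ and $f<0$, and using $\mathbb{E}^{Q}[\int_{0}^{1}dA_{s}]<\infty$, which follows from $\mathbb{E}^{Q}[N[0,1]]<\infty$ and the sublinearity of $\lambda$) shows that
\[
\sup_{f}\mathbb{E}^{Q}\Bigl[\int_{0}^{1}(1-e^{f})\,d(A_{s}+\epsilon\hat{A}_{s})+\int_{0}^{1}f\,d\hat{A}_{s}\Bigr]
\]
is bounded above by a constant plus $\sup_{f}\Phi(f)$, uniformly in $\epsilon$. The left side equals $\mathbb{E}^{Q}[h(Q^{\omega^{-}},P^{\omega^{-}}_{\epsilon})_{\mathcal{F}^{0}_{1}}]$. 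Letting $\epsilon\downarrow 0$, one has $P^{\omega^{-}}_{\epsilon}\to P^{\omega^{-}}$ weakly, and lower semicontinuity of relative entropy together with Fatou gives $H(Q)=\mathbb{E}^{Q}[h(Q^{\omega^{-}},P^{\omega^{-}})_{\mathcal{F}^{0}_{1}}]<\infty$, the desired contradiction. This $\epsilon$-perturbation device is the missing idea in your proposal.
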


\begin{proof}
$\mathbb{E}^{Q}[N[0,1]]<\infty$ implies that $\mathbb{E}^{Q^{\omega^{-}}}[N[0,1]]<\infty$ for almost every $\omega^{-}$ under $Q$, 
also $\sum_{\tau\in\omega^{-}}h(-\tau)<\infty$ since $\mathbb{E}^{Q}[\sum_{\tau\in\omega^{-}}h(-\tau)]
=\Vert h\Vert_{L^{1}}\mathbb{E}^{Q}[N[0,1]]<\infty$. Thus,
\begin{align}
\mathbb{E}^{P^{\omega^{-}}}[N[0,1]]&=\mathbb{E}^{P^{\omega^{-}}}\left[\int_{0}^{1}\lambda\left(\sum_{\tau\in\omega[0,s)\cup\omega^{-}}h(s-\tau)\right)ds\right]
\\
&\leq C_{\epsilon}+\epsilon h(0)\mathbb{E}^{P^{\omega^{-}}}[N[0,1]]+\epsilon\sum_{\tau\in\omega^{-}}h(-\tau)<\infty.\nonumber
\end{align}
Pick up $\epsilon<\frac{1}{h(0)}$, we have $\mathbb{E}^{P^{\omega^{-}}}[N[0,1]]<\infty$.

By the theory of absolute continuity of point processes, see for example Chapter 13 of Daley and Vere-Jones \cite{Daley}, 
if $\mathbb{E}^{Q^{\omega^{-}}}[N[0,1]],\mathbb{E}^{P^{\omega^{-}}}[N[0,1]]<\infty$, then, $Q^{\omega^{-}}\ll P^{\omega^{-}}$ 
if and only if $\hat{A}_{t}\ll A_{t}$, where $\hat{A}_{t}$ and $A_{t}=\int_{0}^{t}\lambda(\omega^{-},\omega,s)ds$ 
are the compensators of $N_{t}$ under $Q^{\omega^{-}}$ and $P^{\omega^{-}}$ respectively. 
If that's the case, we can write $\hat{A}_{t}=\int_{0}^{t}\hat{\lambda}(\omega^{-},\omega,s)ds$ for some $\hat{\lambda}$ and there is Girsanov formula
\begin{equation}
\log\frac{dQ^{\omega^{-}}}{dP^{\omega^{-}}}\bigg|_{\mathcal{F}^{0}_{1}}=\int_{0}^{1}(\lambda-\hat{\lambda})ds
+\int_{0}^{1}\log\left(\hat{\lambda}/\lambda\right)dN_{s},
\end{equation}
which implies that
\begin{equation}
H(Q)=\mathbb{E}^{Q}\left[\int_{0}^{1}\lambda-\hat{\lambda}+\log\left(\hat{\lambda}/\lambda\right)\hat{\lambda}ds\right].
\end{equation}
For any $f$, $\hat{\lambda}f+(1-e^{f})\lambda\leq\hat{\lambda}\log(\hat{\lambda}/\lambda)+\lambda-\hat{\lambda}$ 
and equality is achieved when $f=\log(\hat{\lambda}/\lambda)$. Thus, clearly, we have
\begin{equation}
\sup_{f(\omega,s)\in\mathcal{B}(\mathcal{F}^{-\infty}_{s})\cap C(\Omega\times\mathbb{R}),0\leq s\leq 1}\mathbb{E}^{Q}
\left[\int_{0}^{1}\lambda(1-e^{f})ds+\int_{0}^{1}fdN_{s}\right]\leq H(Q).
\end{equation}
On the other hand, we can always find a sequence $f_{n}$ convergent to $\log(\hat{\lambda}/\lambda)$ and by Fatou's lemma, we get the other direction.

Now, assume that we do not have $Q^{\omega^{-}}\ll P^{\omega^{-}}$ for a.e. $\omega^{-}$ under $Q$. That implies that $H(Q)=\infty$.
We want to show that
\begin{equation}
\sup_{f(\omega,s)\in\mathcal{B}(\mathcal{F}^{-\infty}_{s})\cap C(\Omega\times\mathbb{R}),0\leq s\leq 1}
\mathbb{E}^{Q}\left[\int_{0}^{1}\lambda(1-e^{f})ds+\int_{0}^{1}fdN_{s}\right]=\infty.
\end{equation}
Let us assume that
\begin{equation}
\sup_{f(\omega,s)\in\mathcal{B}(\mathcal{F}^{-\infty}_{s})\cap C(\Omega\times\mathbb{R}),0\leq s\leq 1}
\mathbb{E}^{Q}\left[\int_{0}^{1}\lambda(1-e^{f})ds+\int_{0}^{1}fdN_{s}\right]<\infty.
\end{equation}
We want to prove that $H(Q)<\infty$.

Let $P^{\omega^{-}}_{\epsilon}$ be the point process on $[0,1]$ with compensator $A_{t}+\epsilon\hat{A}_{t}$. 
Clearly $\hat{A}_{t}\ll A_{t}+\epsilon\hat{A}_{t}$ and $Q^{\omega^{-}}\ll P^{\omega^{-}}_{\epsilon}$.

For any $f$,
\begin{align}
&\mathbb{E}^{Q}\left[\int_{0}^{1}(1-e^{f})d(A_{s}+\epsilon\hat{A}_{s})+fd\hat{A}_{s}\right]
\\
&=\mathbb{E}^{Q}\left[\int_{0}^{1}(1-e^{f})\chi_{f<0}d(A_{s}+\epsilon\hat{A}_{s})+f\chi_{f<0}d\hat{A}_{s}\right]\nonumber
\\
&+\mathbb{E}^{Q}\left[\int_{0}^{1}(1-e^{f})\chi_{f\geq 0}d(A_{s}+\epsilon\hat{A}_{s})+f\chi_{f\geq 0}d\hat{A}_{s}\right]\nonumber
\\
&\leq\mathbb{E}^{Q}\left[\int_{0}^{1}d(A_{s}+\epsilon\hat{A}_{s})\right]+\mathbb{E}^{Q}\left[\int_{0}^{1}(1-e^{f})
\chi_{f\geq 0}dA_{s}+f\chi_{f\geq 0}d\hat{A}_{s}\right]\nonumber
\\
&=\mathbb{E}^{Q}\left[\int_{0}^{1}d(A_{s}+\epsilon\hat{A}_{s})\right]
+\mathbb{E}^{Q}\left[\int_{0}^{1}(1-e^{f\chi_{f\geq 0}})dA_{s}+f\chi_{f\geq 0}d\hat{A}_{s}\right]\nonumber
\\
&\leq C_{\delta}+\delta(h(0)+\Vert h\Vert_{L^{1}})\mathbb{E}^{Q}[N[0,1]]\nonumber
\\
&+\sup_{f(\omega,s)\in\mathcal{B}(\mathcal{F}^{-\infty}_{s})\cap C(\Omega\times\mathbb{R}),0\leq s\leq 1}
\mathbb{E}^{Q}\left[\int_{0}^{1}\lambda(1-e^{f})ds+\int_{0}^{1}fdN_{s}\right]<\infty.\nonumber
\end{align}
Therefore, 
\begin{align}
\infty&>\liminf_{\epsilon\downarrow 0}\sup_{f(\omega,s)\in\mathcal{B}(\mathcal{F}^{-\infty}_{s})
\cap C(\Omega\times\mathbb{R}),0\leq s\leq 1}\mathbb{E}^{Q}\left[\int_{0}^{1}(1-e^{f})d(A_{s}+\epsilon\hat{A}_{s})+fd\hat{A}_{s}\right]
\\
&=\liminf_{\epsilon\downarrow 0}\sup_{f(\omega,s)\in\mathcal{B}(\mathcal{F}^{-\infty}_{s})
\cap C(\Omega\times\mathbb{R}),0\leq s\leq 1}\nonumber
\\
&\qquad\qquad\qquad\qquad\qquad\mathbb{E}^{Q}\left[\int_{0}^{1}\left(1-e^{f}
+f\cdot\frac{d\hat{A}_{s}}{d(A_{s}+\epsilon\hat{A}_{s})}\right)d(A_{s}+\epsilon\hat{A}_{s})\right]\nonumber
\\
&=\liminf_{\epsilon\downarrow 0}\mathbb{E}^{Q}[h(Q^{\omega^{-}},P^{\omega^{-}}_{\epsilon})_{\mathcal{F}^{0}_{1}}]\nonumber
\\
&=\mathbb{E}^{Q}[h(Q^{\omega^{-}},P^{\omega^{-}})_{\mathcal{F}^{0}_{1}}]=H(Q),\nonumber
\end{align}
by lower semicontinuity of the relative entropy $h(\cdot,\cdot)$, Fatou's lemma, and the fact that 
$P^{\omega^{-}}_{\epsilon}\rightarrow P^{\omega^{-}}$ weakly as $\epsilon\downarrow 0$. Hence $H(Q)<\infty$.
\end{proof}

\begin{lemma}\label{lscconvex}
$H(Q)$ is lower semicontinuous and convex in $Q$.
\end{lemma}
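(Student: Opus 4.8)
The plan is to deduce both properties at once from the variational representation in Lemma \ref{variational}. For each fixed $f=f(\omega,s)\in\mathcal{B}(\mathcal{F}^{-\infty}_{s})\cap C(\Omega\times\mathbb{R})$, $0\le s\le1$, write
\[
G_{f}(Q):=\mathbb{E}^{Q}\left[\int_{0}^{1}\lambda(1-e^{f})ds+\int_{0}^{1}fdN_{s}\right]=\int_{\Omega}\Psi_{f}(\omega)\,Q(d\omega),
\]
where $\Psi_{f}(\omega)=\int_{0}^{1}\lambda(\omega,s)(1-e^{f(\omega,s)})ds+\int_{0}^{1}f(\omega,s)dN_{s}$ and $\lambda(\omega,s)=\lambda(\sum_{\tau\in\omega[0,s)\cup\omega^{-}}h(s-\tau))$, so that $\Psi_{f}$ is a fixed measurable functional of $\omega$ that does not involve $Q$. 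By Lemma \ref{variational}, $H(Q)=\sup_{f}G_{f}(Q)$, the supremum being taken over a class of test functions that does not depend on $Q$. Since $\Psi_{f}$ does not depend on $Q$, each $G_{f}$ is affine in $Q$, and a supremum of affine functionals is convex; hence $H$ is convex. Likewise a supremum of lower semicontinuous functionals is lower semicontinuous, so it suffices to show that each $G_{f}$ is continuous on $\mathcal{M}_{S}(\Omega)$ (which is in fact more than needed).

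Fix $f$ and let $Q_{n}\to Q$ in $\mathcal{M}_{S}(\Omega)$. Two ingredients are required. First, $\Psi_{f}$ is continuous at $Q$-almost every $\omega$: mass cannot escape to $-\infty$ in the past-dependent sums $\sum_{\tau\in\omega^{-}}h(-\tau)$ and $\sum_{\tau\in\omega[0,s)\cup\omega^{-}}h(s-\tau)$ because $h$ is decreasing and integrable (so $\sum_{k\ge n}h(k)\to0$), and the remaining possible discontinuities come only from points of $\omega$ lying exactly at $s=0$ or $s=1$; such configurations are $Q$-null, since for $Q\in\mathcal{M}_{S}(\Omega)$ the intensity measure $\mathbb{E}^{Q}[N(\cdot)]$ is translation invariant, hence a constant multiple of Lebesgue measure and in particular non-atomic. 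Second, using the Lipschitz bound $\lambda(z)\le\lambda(0)+\alpha z$ and the boundedness of $f$, one gets the pointwise domination
\[
|\Psi_{f}(\omega)|\le c_{1}+c_{2}\Big(N[0,1](\omega)+\sum_{\tau\in\omega^{-}}h(-\tau)\Big),
\]
with $c_{1},c_{2}$ depending only on $f,\lambda,h$; by stationarity (Campbell's formula), $\mathbb{E}^{Q}[\sum_{\tau\in\omega^{-}}h(-\tau)]=\Vert h\Vert_{L^{1}}\mathbb{E}^{Q}[N[0,1]]$, so the $Q_{n}$-expectation of the right-hand side converges to its $Q$-expectation precisely because the topology of $\mathcal{M}_{S}(\Omega)$ forces $\mathbb{E}^{Q_{n}}[N[0,1]]\to\mathbb{E}^{Q}[N[0,1]]$. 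A generalized dominated-convergence (extended Portmanteau) argument — weak convergence $Q_{n}\Rightarrow Q$, together with $Q$-a.e.\ continuity of $\Psi_{f}$ and convergence of the integrals of the dominating function — then gives $G_{f}(Q_{n})\to G_{f}(Q)$, completing the argument.

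The main obstacle is exactly the unboundedness of $\Psi_{f}$: the term $\int_{0}^{1}fdN_{s}$ grows linearly in $N[0,1]$, and the intensity $\lambda(\omega,s)$ grows with the entire past through $\sum_{\tau\in\omega^{-}}h(-\tau)$, so ordinary weak convergence of the $Q_n$ does not suffice. This is precisely why the non-standard strengthened topology on $\mathcal{M}_{S}(\Omega)$ (convergence of $\mathbb{E}[N[0,1]]$) is needed, and it must be combined with stationarity to upgrade weak convergence to convergence of these expectations. Verifying the $Q$-a.e.\ continuity of $\omega\mapsto\Psi_{f}(\omega)$ — in particular that no mass escapes to $-\infty$ in the past, which is guaranteed by $h$ being decreasing and integrable — and then invoking the appropriate generalized dominated convergence statement is the one place where the unusual topology (cf.\ the remarks around Lemma \ref{tightness}) has to be handled carefully.
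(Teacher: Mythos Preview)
Your overall strategy is exactly the paper's: use the variational formula of Lemma~\ref{variational} to write $H(Q)=\sup_f G_f(Q)$ and then show each $G_f$ is a continuous linear functional of $Q$, so that the supremum is convex and lower semicontinuous. The convexity part is fine.

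The gap is in your claim that $\Psi_f$ is $Q$-a.e.\ continuous on $\Omega$. The second summand $\int_0^1 f\,dN_s$ is indeed $Q$-a.e.\ continuous and dominated by $\|f\|_{\infty}N[0,1]$, and the paper handles it just as you say. But the first summand involves $\lambda(\omega,s)=\lambda\big(\sum_{\tau<s}h(s-\tau)\big)$, and the map $\omega\mapsto\sum_{\tau\in\omega^-}h(-\tau)$ is \emph{nowhere} continuous in the vague topology: given any $\omega$, set $\omega_n=\omega\cup\{-n+j/(m_n+1):1\le j\le m_n\}$ with $m_n=\lfloor 1/h(n)\rfloor$. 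Then $\omega_n\to\omega$ vaguely (the extra points escape every compact), yet the added contribution to the sum is at least $m_n\,h(n)\ge 1-h(n)$. Your heuristic ``$\sum_{k\ge n}h(k)\to0$'' would only control the tail if the number of points per unit interval were uniformly bounded along the approximating sequence, which vague convergence does not provide. Consequently neither $\Psi_f$ nor your dominating function $c_1+c_2\big(N[0,1]+\sum_{\tau\in\omega^-}h(-\tau)\big)$ is $Q$-a.e.\ continuous, and the extended Portmanteau/dominated-convergence step does not apply as stated.

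The paper repairs exactly this point by truncating the memory: set $h^M=h\chi_{[0,M]}$ and $\lambda^M(\omega,s)=\lambda\big(\sum_{\tau<s}h^M(s-\tau)\big)$. Then $\int_0^1\lambda^M(1-e^{f})\,ds$ depends only on $\omega\cap[-M,1]$, is genuinely continuous on $\Omega$, and is dominated by $K(1+e^{\|f\|_\infty})N[-M,1]$, so the strengthened topology (together with stationarity, $\mathbb{E}^{Q_n}[N[-M,1]]=(M+1)\mathbb{E}^{Q_n}[N[0,1]]$) gives convergence of its $Q_n$-expectation. The truncation error is then controlled uniformly in $n$ via the Lipschitz constant:
\[
\Big|\mathbb{E}^{Q_n}\!\int_0^1(\lambda-\lambda^M)(1-e^{f})\,ds\Big|
\le (1+e^{\|f\|_\infty})\,\alpha\,\mathbb{E}^{Q_n}[N[0,1]]\int_M^\infty h(s)\,ds\longrightarrow 0
\]
as $M\to\infty$. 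Inserting this truncation step in place of your direct continuity claim makes the argument go through.
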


\begin{proof}
By Lemma \ref{variational}, we can rewrite $H(Q)$ as
\begin{align}
H(Q)&=\sup_{f(\omega,s)\in\mathcal{B}(\mathcal{F}^{-\infty}_{s})\cap C(\Omega\times\mathbb{R}),0\leq s\leq 1}
\mathbb{E}^{Q}\left[\int_{0}^{1}\lambda(1-e^{f})+\hat{\lambda}fds\right]
\\
&=\sup_{f(\omega,s)\in\mathcal{B}(\mathcal{F}^{-\infty}_{s})\cap C(\Omega\times\mathbb{R}),0\leq s\leq 1}
\mathbb{E}^{Q}\left[\int_{0}^{1}\lambda(1-e^{f})ds+\int_{0}^{1}fdN_{s}\right].\nonumber
\end{align}
If $Q_{n}\rightarrow Q$, then $\mathbb{E}^{Q_{n}}[N[0,1]]\rightarrow\mathbb{E}^{Q}[N[0,1]]$ and $Q_{n}\rightarrow Q$ weakly. 
Since $f(\omega,s)\in C(\Omega\times\mathbb{R})\cap\mathcal{B}(\mathcal{F}^{-\infty}_{s})$, $\int_{0}^{1}f(\omega,s)dN_{s}$ is continuous 
on $\Omega$ and since $f$ is uniformly bounded, $\int_{0}^{1}f(\omega,s)dN_{s}\leq\Vert f\Vert_{L^{\infty}}N[0,1]$. Hence,
\begin{equation}
\mathbb{E}^{Q_{n}}\left[\int_{0}^{1}f(\omega,s)dN_{s}\right]\rightarrow\mathbb{E}^{Q}\left[\int_{0}^{1}f(\omega,s)dN_{s}\right].
\end{equation}
Let $\lambda^{M}=\lambda\left(\sum_{\tau<s}h^{M}(s-\tau)\right)$, where $h^{M}(s)=h(s)\chi_{s\leq M}$. 
Then, $\lambda^{M}(\omega,s)\in C(\Omega\times\mathbb{R})$ and thus $\int_{0}^{1}\lambda^{M}(1-e^{f(\omega,s)})ds\in C(\Omega)$. 
Also, $\int_{0}^{1}\lambda^{M}(1-e^{f(\omega,s)})ds\leq K(1+e^{\Vert f\Vert_{L^{\infty}}})N[-M,1]$, where $K>0$ is some constant. 
Therefore,
\begin{equation}
\mathbb{E}^{Q_{n}}\left[\int_{0}^{1}\lambda^{M}(1-e^{f(\omega,s)})ds\right]
\rightarrow\mathbb{E}^{Q}\left[\int_{0}^{1}\lambda^{M}(1-e^{f(\omega,s)})ds\right]
\end{equation} 
as $n\rightarrow\infty$. Next, notice that
\begin{align}
&\left|\mathbb{E}^{Q}\left[\int_{0}^{1}\lambda^{M}(1-e^{f(\omega,s)})ds\right]-\mathbb{E}^{Q}\left[\int_{0}^{1}\lambda(1-e^{f(\omega,s)})ds\right]\right|
\\
&\leq\mathbb{E}^{Q}(1+e^{\Vert f\Vert_{L^{\infty}}})\alpha\mathbb{E}^{Q}[N[0,1]]\int_{M}^{\infty}h(s)ds\rightarrow 0\nonumber
\end{align}
as $M\rightarrow\infty$. Similarly, we have
\begin{equation}
\limsup_{M\rightarrow\infty}\limsup_{n\rightarrow\infty}\left|\mathbb{E}^{Q_{n}}
\left[\int_{0}^{1}\lambda^{M}(1-e^{f(\omega,s)})ds\right]-\mathbb{E}^{Q_{n}}\left[\int_{0}^{1}\lambda(1-e^{f(\omega,s)})ds\right]\right|=0.
\end{equation}
Hence,
\begin{equation}
\mathbb{E}^{Q_{n}}\left[\int_{0}^{1}\lambda(\omega,s)(1-e^{f(\omega,s)})ds\right]
\rightarrow\mathbb{E}^{Q}\left[\int_{0}^{1}\lambda(\omega,s)(1-e^{f(\omega,s)})ds\right].
\end{equation}
The supremum is taken over a linear functional of $Q$, which is continuous in $Q$, 
therefore the supremum over these linear functionals will be lower semicontinuous. 
Similarly, since in the variational formula expression 
of $H(Q)$ in Lemma \ref{variational}, the supremum is taken over a linear functional of $Q$, $H(Q)$ is convex in $Q$.
\end{proof}

\begin{lemma}
$H(Q)$ is linear in $Q$.
\end{lemma}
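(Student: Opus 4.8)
The plan is to exploit the convexity of $H$ already established in Lemma \ref{lscconvex}, so that it suffices to prove the reverse inequality $H(\alpha Q_{1}+(1-\alpha)Q_{2})\geq\alpha H(Q_{1})+(1-\alpha)H(Q_{2})$ for $\alpha\in(0,1)$ and $Q_{1},Q_{2}\in\mathcal{M}_{S}(\Omega)$. The key tool is a multi-block (``chain rule'') representation of $H$: for every integer $T\geq 1$ and every $Q\in\mathcal{M}_{S}(\Omega)$,
\[
H(Q)=\frac{1}{T}\,\mathbb{E}^{Q}\!\left[h(Q^{\omega^{-}},P^{\omega^{-}})_{\mathcal{F}^{0}_{T}}\right].
\]
To obtain this, split $\mathcal{F}^{0}_{T}$ into the blocks $\mathcal{F}^{k}_{k+1}$, $0\le k\le T-1$, and apply the chain rule for the relative entropy of regular conditional probabilities, conditioning successively on $\mathcal{F}^{0}_{k}$. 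The crucial structural point is that the reference $P$ is \emph{past-consistent}: since the Hawkes intensity $\lambda(\sum_{\tau<s}h(s-\tau))$ at a time $s\in(k,k+1]$ depends on the configuration only through the points strictly to the left of $s$, conditioning $P^{\omega^{-}}$ further on $\mathcal{F}^{0}_{k}$ and restricting to $\mathcal{F}^{k}_{k+1}$ is, after the shift $\theta_{k}$, exactly $P^{(\theta_{k}\omega)^{-}}$ restricted to $\mathcal{F}^{0}_{1}$; the analogous identity for $Q$ is the tower property for regular conditional probabilities. Taking $\mathbb{E}^{Q}$ and using stationarity of $Q$, each of the $T$ blocks contributes precisely $H(Q)=\mathbb{E}^{Q}[h(Q^{\omega^{-}},P^{\omega^{-}})_{\mathcal{F}^{0}_{1}}]$, which yields the displayed identity (with both sides infinite simultaneously when $H(Q)=\infty$).

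Next I would decompose the conditional law of the mixture $Q=\alpha Q_{1}+(1-\alpha)Q_{2}$. With $\mu=Q|_{\mathcal{F}^{-\infty}_{0}}$, $\mu_{i}=Q_{i}|_{\mathcal{F}^{-\infty}_{0}}$, and $\rho_{i}=d\mu_{i}/d\mu$, the functions $\beta_{1}:=\alpha\rho_{1}$, $\beta_{2}:=(1-\alpha)\rho_{2}$ are nonnegative with $\beta_{1}+\beta_{2}=1$ $\mu$-a.e., and checking the defining property of regular conditional probability gives $Q^{\omega^{-}}=\beta_{1}(\omega^{-})\,Q_{1}^{\omega^{-}}+\beta_{2}(\omega^{-})\,Q_{2}^{\omega^{-}}$ for $\mu$-a.e.\ $\omega^{-}$. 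Applying on $\mathcal{F}^{0}_{T}$, with reference $\nu=P^{\omega^{-}}$, the elementary bound $h(\beta_{1}\nu_{1}+\beta_{2}\nu_{2},\nu)\geq\beta_{1}h(\nu_{1},\nu)+\beta_{2}h(\nu_{2},\nu)+\beta_{1}\log\beta_{1}+\beta_{2}\log\beta_{2}$ (valid for probability measures $\nu_{1},\nu_{2}$, weights $\beta_{i}\ge 0$ summing to $1$, any $\nu$, with the usual $+\infty$ conventions), and then integrating against $Q$, gives
\[
\mathbb{E}^{Q}\!\left[h(Q^{\omega^{-}},P^{\omega^{-}})_{\mathcal{F}^{0}_{T}}\right]\geq\mathbb{E}^{Q}\!\left[\beta_{1}\,h(Q_{1}^{\omega^{-}},P^{\omega^{-}})_{\mathcal{F}^{0}_{T}}+\beta_{2}\,h(Q_{2}^{\omega^{-}},P^{\omega^{-}})_{\mathcal{F}^{0}_{T}}\right]-\log 2 .
\]
Because $h(Q_{i}^{\omega^{-}},P^{\omega^{-}})_{\mathcal{F}^{0}_{T}}$ is a function of $\omega^{-}$ alone and $\mathbb{E}^{Q}[\alpha\rho_{1}(\omega^{-})g(\omega^{-})]=\alpha\,\mathbb{E}^{Q_{1}}[g(\omega^{-})]$ (similarly for the second term), the right-hand side equals $\alpha T H(Q_{1})+(1-\alpha)T H(Q_{2})-\log 2$ by the chain-rule representation applied to $Q_{1}$ and $Q_{2}$. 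Dividing by $T$ and letting $T\to\infty$ gives $H(Q)\geq\alpha H(Q_{1})+(1-\alpha)H(Q_{2})$, which together with Lemma \ref{lscconvex} proves $H(\alpha Q_{1}+(1-\alpha)Q_{2})=\alpha H(Q_{1})+(1-\alpha)H(Q_{2})$; the same argument also forces $H(Q)=\infty$ whenever $\max\{H(Q_{1}),H(Q_{2})\}=\infty$.

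I expect the main obstacle to be the chain-rule representation: one must handle the regular conditional probabilities on $\Omega$ carefully, verify past-consistency of the Hawkes reference across all block boundaries (and that the blocks $\mathcal{F}^{k}_{k+1}$ generate $\mathcal{F}^{0}_{T}$ up to null sets), and confirm that the chain rule for relative entropy applies here, including with the $+\infty$ convention in the regime where some of the entropies are infinite. Once that representation is available, the mixture decomposition of $Q^{\omega^{-}}$ and the $\log 2$ estimate are routine, and the $-(\log 2)/T$ correction disappears in the limit $T\to\infty$, which is exactly what upgrades convexity to linearity.
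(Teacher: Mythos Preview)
Your approach is correct but takes a genuinely different route from the paper. The paper invokes the ergodic decomposition machinery of Donsker and Varadhan: there is a $\mathcal{F}^{-\infty}_{0}$-measurable map $\omega\mapsto\hat{Q}^{\omega^{-}}$ into $\mathcal{M}_{E}(\Omega)$, defined universally (i.e.\ not depending on $Q$), with $Q=\int\hat{Q}^{\omega^{-}}\,Q(d\omega^{-})$ for every $Q\in\mathcal{M}_{S}(\Omega)$. One then observes that $H(Q)=\mathbb{E}^{Q}[h(\hat{Q}^{\omega^{-}},P^{\omega^{-}})_{\mathcal{F}^{0}_{1}}]$, and since the integrand is a fixed function of $\omega$ independent of $Q$, linearity is immediate. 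This is short but leans on a structurally deep fact (existence of a universal disintegration into ergodic components). Your argument, by contrast, is more self-contained: you combine the already-proved convexity with a direct lower bound obtained from the chain-rule identity $H(Q)=T^{-1}\mathbb{E}^{Q}[h(Q^{\omega^{-}},P^{\omega^{-}})_{\mathcal{F}^{0}_{T}}]$, the mixture decomposition $Q^{\omega^{-}}=\beta_{1}Q_{1}^{\omega^{-}}+\beta_{2}Q_{2}^{\omega^{-}}$, and the elementary inequality $h(\beta_{1}\nu_{1}+\beta_{2}\nu_{2},\nu)\geq\sum_{i}\beta_{i}h(\nu_{i},\nu)-\log 2$, killing the $\log 2$ by sending $T\to\infty$. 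Your route avoids the ergodic decomposition entirely and would transfer to any setting where the reference kernel is past-consistent and the one-block entropy extends multiplicatively to $T$ blocks; the paper's route is quicker once the Donsker--Varadhan decomposition is granted. Your stated caveats (careful handling of regular conditional probabilities, past-consistency across block boundaries, the $+\infty$ conventions) are the right places to be careful, but none of them presents a genuine obstruction here.
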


\begin{proof}
It is in general true that the process-level entropy function $H(Q)$ is linear in $Q$. Following the arguments in Donsker and Varadhan \cite{Donsker},
there exists a subset $\Omega_{0}\subset\Omega$ which is $\mathcal{F}^{-\infty}_{0}$ measurable and a $\mathcal{F}^{-\infty}_{0}$ 
measurable map $\hat{Q}:\Omega_{0}\rightarrow\mathcal{M}_{E}(\Omega)$ such that $Q(\Omega_{0})=1$ for all $Q\in\mathcal{M}_{S}(\Omega)$
and $Q(\omega:\hat{Q}=Q)=1$ for all $Q\in\mathcal{M}_{E}(\Omega)$. Therefore, there exists a universal version,
say $\hat{Q}^{\omega^{-}}$ independent of $Q$ such that $\int\hat{Q}^{\omega^{-}}Q(d\omega^{-})=Q$. Since that is
true for all $Q\in\mathcal{M}_{E}(\Omega)$, it also holds for $Q\in\mathcal{M}_{S}(\Omega)$. Hence, 
\begin{equation}
H(Q)=\mathbb{E}^{Q}\left[h(Q^{\omega^{-}},P^{\omega^{-}})_{\mathcal{F}^{0}_{1}}\right]
=\mathbb{E}^{Q}\left[h(\hat{Q}^{\omega^{-}},P^{\omega^{-}})_{\mathcal{F}^{0}_{1}}\right],
\end{equation}
i.e. $H(Q)$ is linear in $Q$.
\end{proof}

In our paper, we are proving the large deviation principle
for Hawkes processes started with empty history, i.e. with probability measure $P^{\emptyset}$. But when
time elapses by $t$, the Hawkes process generates points and that create a new history up to time $t$. We need
to understand how the history that was created would affect the future. What we want to prove is some uniform
estimates that if given the past history which is well controlled, then the new history will also be well controlled.
This is essentially what the following Lemma \ref{mainlower} says. Consider the configuration of points
starting from time $0$ up to time $t$. We shift it by $t$ and denote that by $w_{t}$ such that $w_{t}\in\Omega^{-}$, where $\Omega^{-}$ is
$\Omega$ restricted to $\mathbb{R}^{-}$.
These notations will be used in Lemma \ref{mainlower}.

\begin{remark}
At the very beginning of the paper, we defined $\omega_{t}$. It should not be confused with $w_{t}$ in this section.
\end{remark}

\begin{lemma}\label{mainlower}
For any $Q\in\mathcal{M}_{E}(\Omega)$ such that $H(Q)<\infty$ and any open neighborhood $N$ of $Q$, 
there exists some $K^{-}_{\ell}$ such that $\emptyset\in K^{-}_{\ell}$ and $Q(K^{-}_{\ell})\rightarrow 1$ as $\ell\rightarrow\infty$ and
\begin{equation}
\liminf_{t\rightarrow\infty}\frac{1}{t}\inf_{w_{0}\in K^{-}_{\ell}}\log P^{w_{0}}(R_{t,\omega}\in N,w_{t}\in K^{-}_{\ell})\geq-H(Q).
\end{equation}
\end{lemma}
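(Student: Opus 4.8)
The plan is to run the classical Donsker--Varadhan change-of-measure argument for the lower bound, adapted to the non-Markovian Hawkes dynamics. Since $H(Q)<\infty$, Lemma~\ref{variational} and the Girsanov formula give that the $Q$-compensator of $N$ on $[0,1]$ is absolutely continuous with density $\hat{\lambda}_{s}$; extend $\hat{\lambda}$ to all of $\mathbb{R}^{+}$ using the stationarity of $Q$. Let $\bar{Q}^{w_{0}}$ be the point process on $\mathbb{R}^{+}$ started from the history $w_{0}\in\Omega^{-}$ and run with $\mathcal{F}^{-\infty}_{s}$-intensity $\hat{\lambda}_{s}$ computed from $w_{0}\cup\omega[0,s)$. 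Iterating the one-step conditional kernels of $Q$ and invoking shift-invariance, $\bar{Q}^{w_{0}}$ is a version of the regular conditional distribution of $Q$ on $\mathcal{F}^{0}_{\infty}:=\bigvee_{t}\mathcal{F}^{0}_{t}$ given $\mathcal{F}^{-\infty}_{0}$, so $\int_{\Omega^{-}}\bar{Q}^{w_{0}}(\cdot)\,Q(dw_{0})=Q$ on $\mathcal{F}^{0}_{\infty}$; moreover $\bar{Q}^{w_{0}}\ll P^{w_{0}}$ on each $\mathcal{F}^{0}_{t}$ for $Q$-a.e.\ $w_{0}$, with
\begin{equation}
L_{t}:=\log\frac{d\bar{Q}^{w_{0}}}{dP^{w_{0}}}\Big|_{\mathcal{F}^{0}_{t}}=\int_{0}^{t}(\lambda_{s}-\hat{\lambda}_{s})\,ds+\int_{0}^{t}\log(\hat{\lambda}_{s}/\lambda_{s})\,dN_{s}.
\end{equation}

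For the compact sets I would take $K^{-}_{\ell}$ to be (the vague closure of) the set of $w_{0}\in\Omega^{-}$ with $\sum_{\tau\in w_{0}}h(-\tau)\leq\ell$ and $N[-n,0](w_{0})\leq\ell a_{n}$ for all $n\geq1$, where $a_{n}\uparrow\infty$ is a fixed sequence with $\sum_{n}n/a_{n}<\infty$; since $\mathbb{E}^{Q}[N[-n,0]]=n\,\mathbb{E}^{Q}[N[0,1]]$ and $\mathbb{E}^{Q}[\sum_{\tau\in w_{0}}h(-\tau)]=\Vert h\Vert_{L^{1}}\mathbb{E}^{Q}[N[0,1]]$ are finite (the first moment being controlled by Lemma~\ref{finitemean}), Markov's inequality gives $Q(K^{-}_{\ell})\to1$; one shrinks $K^{-}_{\ell}$ so that it also lies in the $Q$-full-measure set where $\bar{Q}^{w_{0}}\ll P^{w_{0}}$. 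These sets are vaguely compact, increasing, and contain $\emptyset$. Writing $A_{t}=\{R_{t,\omega}\in N\}\cap\{w_{t}\in K^{-}_{\ell}\}$, Jensen's inequality applied to $P^{w_{0}}(A_{t})=\mathbb{E}^{\bar{Q}^{w_{0}}}[1_{A_{t}}e^{-L_{t}}]$ (valid once $\bar{Q}^{w_{0}}(A_{t})>0$) yields
\begin{equation}
\frac{1}{t}\log P^{w_{0}}(A_{t})\geq\frac{1}{t}\log\bar{Q}^{w_{0}}(A_{t})-\frac{1}{t\,\bar{Q}^{w_{0}}(A_{t})}\mathbb{E}^{\bar{Q}^{w_{0}}}\!\left[1_{A_{t}}L_{t}\right].
\end{equation}

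It then suffices to show, uniformly in $w_{0}\in K^{-}_{\ell}$, that $\bar{Q}^{w_{0}}(A_{t})\to1$ and $\frac{1}{t}\mathbb{E}^{\bar{Q}^{w_{0}}}[1_{A_{t}}L_{t}]\to H(Q)$; then $\liminf_{t\to\infty}\inf_{w_{0}\in K^{-}_{\ell}}$ of the right side is $-H(Q)$. For the entropy term, I would first replace $\int_{0}^{t}\log(\hat{\lambda}_{s}/\lambda_{s})\,dN_{s}$ by its $\bar{Q}^{w_{0}}$-compensator $\int_{0}^{t}\hat{\lambda}_{s}\log(\hat{\lambda}_{s}/\lambda_{s})\,ds$ (after truncating $\log(\hat{\lambda}/\lambda)$ to justify the martingale identity used throughout the paper), so that $\mathbb{E}^{\bar{Q}^{w_{0}}}[L_{t}]=\mathbb{E}^{\bar{Q}^{w_{0}}}[\int_{0}^{t}(\lambda_{s}-\hat{\lambda}_{s}+\hat{\lambda}_{s}\log(\hat{\lambda}_{s}/\lambda_{s}))\,ds]\geq0$ by Lemma~\ref{positivity}; the indicator is removed at cost $o(t)$ via Cauchy--Schwarz, $|\mathbb{E}^{\bar{Q}^{w_{0}}}[1_{A_{t}^{c}}L_{t}]|\leq\bar{Q}^{w_{0}}(A_{t}^{c})^{1/2}\,\mathbb{E}^{\bar{Q}^{w_{0}}}[L_{t}^{2}]^{1/2}$, using $\bar{Q}^{w_{0}}(A_{t}^{c})\to0$ and the uniform bound $\mathbb{E}^{\bar{Q}^{w_{0}}}[L_{t}^{2}]=O(t^{2})$ coming from the Lipschitz and sublinearity hypotheses on $\lambda$ and the constraints defining $K^{-}_{\ell}$. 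Finally $\frac{1}{t}\mathbb{E}^{\bar{Q}^{w_{0}}}[L_{t}]$ is a Ces\`{a}ro average of the expected instantaneous entropy rate along $\bar{Q}^{w_{0}}$; averaging over $w_{0}\sim Q$, the identity $\int\bar{Q}^{w_{0}}Q(dw_{0})=Q$ together with stationarity of $Q$ shows the average equals $H(Q)$ exactly, and the convergence-to-equilibrium of $\bar{Q}^{w_{0}}$ upgrades this to uniform convergence on $K^{-}_{\ell}$. For $\bar{Q}^{w_{0}}(A_{t})\to1$: under the stationary $Q$, $\{w_{t}\in K^{-}_{\ell}\}$ has probability $Q(K^{-}_{\ell})$ for every $t$, and $\{R_{t,\omega}\in N\}$ has $Q$-probability $\to1$ by the ergodic theorem (as $Q\in\mathcal{M}_{E}(\Omega)$ and $R_{t,\omega}\to Q$); transferring through $\int\bar{Q}^{w_{0}}Q(dw_{0})=Q$ and again the convergence to equilibrium gives $\inf_{w_{0}\in K^{-}_{\ell}}\bar{Q}^{w_{0}}(A_{t})\to1$, after passing if necessary to a slightly smaller $K^{-}_{\ell}$ with the same properties.

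The main obstacle is precisely this uniformity over the infinite initial configurations $w_{0}\in K^{-}_{\ell}$ in both limits: for a single $w_{0}$ the ergodic theorem suffices, but the Hawkes dynamics are not Markov and $w_{0}$ carries infinitely many points whose influence must be shown to fade uniformly. I would handle it by a coupling argument in the spirit of Br\'{e}maud and Massouli\'{e} \cite{Bremaud}: because $h$ is positive, continuous and decreasing with $\Vert h\Vert_{L^{1}}<\infty$, the contribution $\sum_{\tau\in w_{0}}h(s-\tau)$ of the pre-$0$ points decays as $s\to\infty$ and is bounded by $\ell$ on $K^{-}_{\ell}$, while $\lambda$ being Lipschitz with $\lambda(z)/z\to0$ makes the process self-stabilizing, so trajectories started from different histories in the compact set $K^{-}_{\ell}$ couple after a coupling time that is tight uniformly over $K^{-}_{\ell}$; the ergodic behaviour of the stationary field then transfers to $\bar{Q}^{w_{0}}$ uniformly. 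All remaining steps are routine given Lemmas~\ref{positivity}, \ref{finitemean} and \ref{variational}.
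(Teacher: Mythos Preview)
Your overall change-of-measure scheme is sound, but the place you flag as the ``main obstacle'' is in fact a genuine gap that your coupling sketch does not close. You run the tilted process $\bar{Q}^{w_{0}}$ with intensity $\hat{\lambda}$ and then appeal to Br\'{e}maud--Massouli\'{e} type stability to get uniform convergence to equilibrium. But the stability results in \cite{Bremaud} apply to the $\lambda$-dynamics under structural hypotheses (Lipschitz, sublinearity) that are assumed on $\lambda$, not on $\hat{\lambda}$. The intensity $\hat{\lambda}$ is an abstract $\mathcal{F}^{-\infty}_{s}$-predictable density extracted from $Q$ via Girsanov; nothing prevents it from being wildly non-Lipschitz, unbounded, or even zero on sets of positive Lebesgue measure. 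So there is no mechanism by which $\bar{Q}^{w_{0}}$ for an arbitrary $w_{0}\in K^{-}_{\ell}$ (in particular $w_{0}=\emptyset$, which typically has $Q$-measure zero) should inherit the ergodic behaviour of the stationary field, and ``passing to a slightly smaller $K^{-}_{\ell}$'' cannot help since the property you need is genuinely dynamical, not a null-set issue. The same problem contaminates your moment bound $\mathbb{E}^{\bar{Q}^{w_{0}}}[L_{t}^{2}]=O(t^{2})$ and the removal of the indicator.

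The paper sidesteps this entirely by never introducing $\bar{Q}^{w_{0}}$. Instead it factors
\[
\frac{dQ^{\omega^{-}}}{dP^{w_{0}}}\Big|_{\mathcal{F}^{0}_{t}}
=\frac{dQ^{\omega^{-}}}{dP^{\omega^{-}}}\Big|_{\mathcal{F}^{0}_{t}}\cdot\frac{dP^{\omega^{-}}}{dP^{w_{0}}}\Big|_{\mathcal{F}^{0}_{t}},
\]
and bounds $P^{w_{0}}(D_{t})$ from below by restricting to the event where both factors are controlled. The first factor is handled by the ergodic theorem applied directly to the stationary $Q$ (the additive functional $\psi(\omega,t)=\log\frac{dQ^{\omega^{-}}}{dP^{\omega^{-}}}|_{\mathcal{F}^{0}_{t}}$ has mean $tH(Q)$), so no uniformity over initial conditions is needed here. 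The second factor compares two Hawkes processes with the \emph{same} intensity function $\lambda$ but different pasts $\omega^{-},w_{0}\in K^{-}_{\ell}$; using the explicit Girsanov formula, Lipschitzness of $\lambda$, and the choice $K^{-}_{\ell}=\{N[-t,0]\leq\ell(1+t)\ \forall t>0\}$, one gets a pathwise bound $\log\frac{dP^{\omega^{-}}}{dP^{w_{0}}}\leq C_{1}(\ell)+C_{2}(\ell)u(t)$ with $u(t)/t\to0$, valid uniformly on a further set $K^{+}_{\ell,t}$ of $Q$-probability close to one. This is exactly where the structural assumptions on $\lambda$ and $h$ enter, and it is the step your coupling argument is trying to replace without having the tools to do so.
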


\begin{proof}
Let us abuse the notations a bit by defining
\begin{equation}
\lambda(\omega^{-})=\lambda\left(\sum_{\tau\in\omega^{-},\tau\in\omega[0,s)}h(s-\tau)\right).
\end{equation}

For any $t>0$, since $\lambda(\cdot)\geq c>0$ and $\lambda(\cdot)$ is Lipschitz with constant $\alpha$, we have
\begin{align}
\log\frac{dP^{\omega^{-}}}{dP^{w_{0}}}\bigg|_{\mathcal{F}^{0}_{t}}
&=\int_{0}^{t}\lambda(w_{0})-\lambda(\omega^{-})ds+\int_{0}^{t}\log\left(\frac{\lambda(\omega^{-})}{\lambda(w_{0})}\right)dN_{s}
\\
&\leq\int_{0}^{t}|\lambda(w_{0})-\lambda(\omega^{-})|ds
+\int_{0}^{t}\log\left(1+\frac{|\lambda(w_{0})-\lambda(\omega^{-})|}{\lambda(w_{0})}\right)dN_{s}\nonumber
\\
&\leq\int_{0}^{t}\alpha\sum_{\tau\in\omega^{-}\cup w_{0}}h(s-\tau)ds+\int_{0}^{t}\frac{\alpha}{c}\sum_{\tau\in\omega^{-}\cup w_{0}}h(s-\tau)dN_{s}.\nonumber
\end{align}
Define
\begin{equation}
K^{-}_{\ell}=\left\{\omega:N[-t,0](\omega)\leq\ell(1+t), \forall t>0\right\}.
\end{equation}
By the maximal ergodic theorem, letting $[t]$ denote the largest integer less or equal to $t$, we have
\begin{align}
Q((K^{-}_{\ell})^{c})&=Q\left(\sup_{t>0}\frac{N[-t,0]}{t+1}>\ell\right)
\\
&\leq Q\left(\sup_{t>0}\frac{N[-([t]+1),0]}{[t]+1}>\ell\right)\nonumber
\\
&=Q\left(\sup_{n\geq 1, n\in\mathbb{N}}\frac{N[-n,0]}{n}>\ell\right)\nonumber
\\
&\leq\frac{\mathbb{E}^{Q}[N[0,1]]}{\ell}\rightarrow 0
\end{align}
as $\ell\rightarrow\infty$. Thus $Q(K^{-}_{\ell})\rightarrow 1$ as $\ell\rightarrow\infty$.

Fix any $s>0$ and $\omega^{-}\in K^{-}_{\ell}$. Since $h$ is decreasing, $h'\leq 0$, integration by parts shows that
\begin{align}
\sum_{\tau\in\omega^{-}}h(s-\tau)&=\int_{0}^{\infty}h(s+\sigma)dN[-\sigma,0]
\\
&=-\int_{0}^{\infty}N[-\sigma,0]h'(s+\sigma)d\sigma\nonumber
\\
&\leq-\int_{0}^{\infty}\ell(1+\sigma)h'(s+\sigma)d\sigma\nonumber
\\
&=\ell h(s)+\ell\int_{0}^{\infty}h(s+\sigma)d\sigma\nonumber
\\
&=\ell h(s)+\ell H(s),\nonumber
\end{align}
where $H(t)=\int_{t}^{\infty}h(s)ds$.

Therefore, uniformly for $\omega_{-},w_{0}\in K^{-}_{\ell}$,
\begin{equation}
\int_{0}^{t}\alpha\sum_{\tau\in\omega^{-}\cup w_{0}}h(s-\tau)ds\leq 2\ell\alpha\Vert h\Vert_{L^{1}}+2\ell\alpha u(t),
\end{equation}
where $u(t)=\int_{0}^{t}H(s)ds$ and
\begin{equation}
\int_{0}^{t}\frac{\alpha}{c}\sum_{\tau\in\omega^{-}\cup w_{0}}h(s-\tau)dN_{s}\leq\frac{2\ell\alpha}{c}\int_{0}^{t}(h(s)+H(s))dN_{s}.
\end{equation}
Define
\begin{equation}
K^{+}_{\ell,t}=\left\{\omega:\frac{2\ell\alpha}{c}\int_{0}^{t}(h(s)+H(s))dN_{s}\leq\ell^{2}(\Vert h\Vert_{L^{1}}+u(t))\right\}.
\end{equation}
Then, uniformly in $t>0$,
\begin{equation}
Q((K^{+}_{\ell,t})^{c})\leq\frac{2\alpha\mathbb{E}^{Q}[N[0,1]]}{c\cdot\ell}\rightarrow 0,
\end{equation}
as $\ell\rightarrow\infty$. Thus $\inf_{t>0}Q(K^{+}_{\ell,t})\rightarrow 1$ as $\ell\rightarrow\infty$.

Hence, uniformly for $\omega_{-},w_{0}\in K^{-}_{\ell}$ and $\omega\in K^{+}_{\ell,t}$,
\begin{align}
\log\frac{dP^{\omega^{-}}}{dP^{w_{0}}}\bigg|_{\mathcal{F}^{0}_{t}}&\leq 2\ell\alpha\Vert h\Vert_{L^{1}}+2\ell\alpha u(t)+\ell^{2}(\Vert h\Vert_{L^{1}}+u(t))
\\
&=C_{1}(\ell)+C_{2}(\ell)u(t),
\end{align}
where $C_{1}(\ell)=2\ell\alpha\Vert h\Vert_{L^{1}}+\ell^{2}\Vert h\Vert_{L^{1}}$ and $C_{2}(\ell)=2\ell\alpha+\ell^{2}$.

Observe that
\begin{equation}
\limsup_{t\rightarrow\infty}\frac{u(t)}{t}=\limsup_{t\rightarrow\infty}\frac{1}{t}\int_{0}^{t}H(s)ds=0.
\end{equation}

Let $D_{t}=\{R_{t,\omega}\in N,w_{t}\in K^{-}_{\ell}\}$. 

Uniformly for $w_{0}\in K^{-}_{\ell,t}$,
\begin{align}
&P^{w_{0}}(D_{t})
\\
&\geq e^{-t(H(Q)+\epsilon)-C_{1}(\ell)-C_{2}(\ell)u(t)}\nonumber
\\
&\cdot Q\left[D_{t}\cap\left\{\frac{1}{t}\log\frac{dP^{\omega^{-}}}{dQ^{\omega^{-}}}\bigg|_{\mathcal{F}^{0}_{t}}
\leq H(Q)+\epsilon\right\}\cap\left\{\log\frac{dP^{\omega^{-}}}{dP^{w_{0}}}\bigg|_{\mathcal{F}^{0}_{t}}
\leq C_{1}(\ell)+C_{2}(\ell)u(t)\right\}\right]\nonumber
\\
&\geq e^{-t(H(Q)+\epsilon)-C_{1}(\ell)-C_{2}(\ell)u(t)}\nonumber
\\
&\qquad\qquad\qquad\qquad\cdot 
Q\left[D_{t}\cap\left\{\frac{1}{t}\log\frac{dP^{\omega^{-}}}{dQ^{\omega^{-}}}\bigg|_{\mathcal{F}^{0}_{t}}
\leq H(Q)+\epsilon\right\}\cap\{K^{+}_{\ell,t}\cap K^{-}_{\ell}\}\right].\nonumber
\end{align}

Since $Q\in\mathcal{M}_{E}(\Omega)$, by ergodic theorem,
\begin{equation}
\lim_{t\rightarrow\infty}Q(R_{t,\omega}\in N)=1,
\end{equation}
and since $\psi(\omega,t)=\log\frac{dQ^{\omega}}{dP^{\omega}}\big|_{\mathcal{F}^{0}_{t}}$ satisfies
\begin{equation}
\psi(\omega,t+s)=\psi(\omega,t)+\psi(\theta_{t}\omega,s),\quad\mathbb{E}^{Q}[\psi(\omega,t)]=tH(Q),
\end{equation}
for almost every $\omega^{-}$ under $Q$,
\begin{equation}
\lim_{t\rightarrow\infty}\frac{1}{t}\log\frac{dP^{\omega^{-}}}{dQ^{\omega^{-}}}\bigg|_{\mathcal{F}^{0}_{t}}=H(Q).
\end{equation}
Since $Q$ is stationary, $Q(w_{t}\in K^{-}_{\ell})\geq Q(K^{-}_{\ell})\rightarrow 1$ as $\ell\rightarrow\infty$. 
Also, $Q(K^{+}_{\ell,t})\geq\inf_{t>0}Q(K^{+}_{\ell,t})\rightarrow 1$ as $\ell\rightarrow\infty$. 
Remember that $\limsup_{t\rightarrow\infty}\frac{u(t)}{t}=0$. By choosing $\ell$ big enough, we conclude that
\begin{equation}
\liminf_{t\rightarrow\infty}\frac{1}{t}\inf_{w_{0}\in K^{-}_{\ell}}\log P^{w_{0}}(R_{t,\omega}\in N,w_{t}\in K^{-}_{\ell})\geq-H(Q)-\epsilon.
\end{equation}
Since it holds for any $\epsilon>0$, we get the desired result.
\end{proof}

\begin{theorem}[Lower Bound]
For any open set $G$, 
\begin{equation}
\liminf_{t\rightarrow\infty}\frac{1}{t}\log P(R_{t,\omega}\in G)\geq-\inf_{Q\in G}H(Q).
\end{equation}
\end{theorem}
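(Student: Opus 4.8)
The plan is to run the standard Donsker--Varadhan reduction and then invoke Lemma~\ref{mainlower}. Since $P(R_{t,\omega}\in G)\ge P(R_{t,\omega}\in N)$ whenever $N\subseteq G$ is open, it suffices to show that for every $Q\in\mathcal{M}_{S}(\Omega)$ and every open neighborhood $N$ of $Q$,
\[
\liminf_{t\to\infty}\frac{1}{t}\log P(R_{t,\omega}\in N)\ \ge\ -H(Q),
\]
since taking the supremum over $Q\in G$ then gives the theorem. If $H(Q)=\infty$ there is nothing to prove, so assume $H(Q)<\infty$. When $Q$ is ergodic the bound is immediate from Lemma~\ref{mainlower}: because $\emptyset\in K^{-}_{\ell}$ we may take $w_{0}=\emptyset$ in the uniform infimum there, and
\[
P(R_{t,\omega}\in N)=P^{\emptyset}(R_{t,\omega}\in N)\ \ge\ P^{\emptyset}(R_{t,\omega}\in N,\ w_{t}\in K^{-}_{\ell})\ \ge\ \inf_{w_{0}\in K^{-}_{\ell}}P^{w_{0}}(R_{t,\omega}\in N,\ w_{t}\in K^{-}_{\ell}),
\]
whose $\frac{1}{t}\log$ has $\liminf\ge -H(Q)$.

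For a general $Q$ with $H(Q)<\infty$ I would first replace $Q$ by a finite mixture of ergodic measures. Using the linearity of $H$ together with the ergodic decomposition $Q=\int \hat Q^{\omega^{-}}\,Q(d\omega^{-})$, one has $H(Q)=\int H(\hat Q^{\omega^{-}})\,Q(d\omega^{-})$, so $H(\hat Q^{\omega^{-}})<\infty$ for $Q$-a.e.\ $\omega^{-}$. Partitioning $\mathcal{M}_{E}(\Omega)$ into small cells in the metric $d$, choosing in each relevant cell a near-minimizer $Q_{i}$ of $H$, and letting $\alpha_{i}$ be the mass the decomposition assigns to that cell, the averaging inequality $\inf_{\text{cell}}H\le(\text{cell average of }H)$ gives ergodic $Q_{1},\dots,Q_{n}$ and weights $\alpha_{i}\ge0$, $\sum_{i}\alpha_{i}=1$, with $\sum_{i}\alpha_{i}H(Q_{i})\le H(Q)+\epsilon$; choosing the cells fine enough (and using Lemma~\ref{finitemean} to keep the first moments under control) forces the barycenter $\sum_{i}\alpha_{i}Q_{i}$ into $N$. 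Then pick open neighborhoods $N_{i}$ of $Q_{i}$ small enough that any configuration whose empirical measure over a time-block of relative length close to $\alpha_{i}$ lies in $N_{i}$, for every $i$, has its full empirical measure in $N$; since the set $K^{-}_{\ell}$ produced in the proof of Lemma~\ref{mainlower} does not depend on $Q$, one common $\ell$ (large enough) works for all of $Q_{1},\dots,Q_{n}$.

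The core is then a concatenation step, which is exactly what the extra event $\{w_{t}\in K^{-}_{\ell}\}$ and the uniform infimum over $w_{0}\in K^{-}_{\ell}$ in Lemma~\ref{mainlower} were designed for. Split $[0,t]$ into consecutive blocks $I_{j}=(s_{j-1},s_{j}]$ with $s_{j}-s_{j-1}=t_{j}\sim\alpha_{j}t$, and let $D_{j}$ be the event that the empirical measure of the $j$-th block lies in $N_{j}$ and that the shifted history at time $s_{j}$ lies in $K^{-}_{\ell}$. Conditioning successively on $\mathcal{F}^{0}_{s_{j-1}}$ and using that the Hawkes process started from history $w_{0}$ has, given $\mathcal{F}^{0}_{s}$, conditional future law $P^{w_{s}}$ (with $w_{s}$ the shifted past at time $s$), together with $\emptyset\in K^{-}_{\ell}$ to start the induction, yields
\[
P^{\emptyset}\!\Big(\bigcap_{j=1}^{n}D_{j}\Big)\ \ge\ \prod_{j=1}^{n}\ \inf_{w_{0}\in K^{-}_{\ell}}P^{w_{0}}(R_{t_{j},\omega}\in N_{j},\ w_{t_{j}}\in K^{-}_{\ell}).
\]
Taking $\frac{1}{t}\log$, letting $t\to\infty$ with $t_{j}\sim\alpha_{j}t$, and applying Lemma~\ref{mainlower} to each pair $(Q_{j},N_{j})$ gives $\liminf_{t\to\infty}\frac{1}{t}\log P^{\emptyset}(R_{t,\omega}\in N)\ge-\sum_{j}\alpha_{j}H(Q_{j})\ge-(H(Q)+\epsilon)$; letting $\epsilon\downarrow0$ and then taking the supremum over $Q\in G$ finishes the proof.

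The step I expect to be the main obstacle is the gluing of the empirical measures: the definition of $R_{t,\omega}$ periodizes $\omega_{t}$ with period $t$, so $R_{t,\omega}$ is only approximately the $(t_{j}/t)$-weighted sum of the block empirical measures, with boundary discrepancies near the $s_{j}$'s. One has to check that, tested against continuous compactly supported functions, these discrepancies are $O(1/t)$ uniformly on the events $D_{j}$ (whose point counts are controlled through $K^{-}_{\ell}$), so that for $t$ large the global empirical measure does land in $N$; carrying this out, along with the cell-partition construction compatible with $N$, is the only genuinely technical part---the probabilistic content is the one-line product inequality above.
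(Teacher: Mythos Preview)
Your proposal is correct and follows essentially the same route as the paper: reduce to finite convex combinations of ergodic measures via the ergodic decomposition and linearity of $H$, then concatenate time-blocks using the uniform-in-$w_{0}\in K^{-}_{\ell}$ estimate of Lemma~\ref{mainlower} (with $\emptyset\in K^{-}_{\ell}$ to start the induction) to obtain the product lower bound. The paper's own proof is in fact sketchier than yours---it simply asserts ``without loss of generality $Q=\sum_{j}\alpha_{j}Q_{j}$'' and writes the product inequality without addressing the periodization/gluing issue that you correctly flag as the only genuinely technical step.
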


\begin{proof}
It is sufficent to prove that for any $Q\in\mathcal{M}_{S}(\Omega)$, $H(Q)<\infty$, for any neighborhood $N$ of $Q$, 
$\liminf_{t\rightarrow\infty}\frac{1}{t}\log P(R_{t,\omega}\in N)\geq -H(Q)$. 
Since for every invariant measure $P\in\mathcal{M}_{S}$, 
there exists a probability measure $\mu_{P}$ on the space $\mathcal{M}_{E}$ of ergodic measures 
such that $P=\int_{\mathcal{M}_{E}}Q\mu_{P}(dQ)$, for any $Q\in\mathcal{M}_{S}(\Omega)$ such that $H(Q)<\infty$, 
without loss of generality, we can assume that $Q=\sum_{j=1}^{\ell}\alpha_{j}Q_{j}$, 
where $\alpha_{j}\geq 0$, $1\leq j\leq\ell$ and $\sum_{j=1}^{\ell}\alpha_{j}=1$. 
By linearity of $H(\cdot)$, $H(Q)=\sum_{j=1}^{\ell}\alpha_{j}H(Q_{j})$. 
Divide the interval $[0,t]$ into subintervals of length $\alpha_{j}t$,
let $t_{j}$, $1\leq j\leq\ell$ be the right hand endpoints of these subintervals, and let $t_{0}=0$. 
For each $Q_{j}$, take $K^{-}_{M}$ as in Lemma \ref{mainlower}. 
We have $\min_{1\leq j\leq\ell}Q_{j}(K^{-}_{M})\rightarrow 1$, as $M\rightarrow\infty$. 
Choose neighborhoods $N_{j}$ of $Q_{j}$, $1\leq j\leq\ell$ such that $\bigcup_{j=1}^{\ell}\alpha_{j}N_{j}\subseteq N$.  We have
\begin{align}
P^{\emptyset}(R_{t,\omega}\in N)&\geq P^{\emptyset}(R_{t_{1},\omega}\in N_{1},w_{t_{1}}\in K^{-}_{M})
\\
&\cdot\prod_{j=2}^{\ell}\inf_{w_{0}\in K^{-}_{t_{j-1}-t_{j-2}}}P^{w_{0}}(R_{t_{j}-t_{j-1},\omega}\in N_{j},w_{t_{j}-t_{j-1}}\in K^{-}_{M}).\nonumber
\end{align}
Now, applying Lemma \ref{mainlower} and the linearity of $H(\cdot)$,
\begin{equation}
\liminf_{t\rightarrow\infty}\frac{1}{t}\log P^{\emptyset}(R_{t,\omega}\in N)\geq-\sum_{j=1}^{\ell}\alpha_{j}H(Q_{j})=-H(Q).
\end{equation}
\end{proof}

\section{Upper Bound}\label{upperbound}

\begin{remark}
By following the same argument as in Donsker and Varadhan \cite{Donsker}, if $\omega^{-}\mapsto P^{\omega^{-}}$ is weakly continuous, then
\begin{equation}
\limsup_{t\rightarrow\infty}\frac{1}{t}\log P(R_{t,\omega}\in A)\leq-\inf_{Q\in A}H(Q),
\end{equation}
for any compact $A$. If the Hawkes process has finite range of memory, i.e. $h(\cdot)$ has compact support and is continuous, 
then, for any $a<b$, if $\omega^{-}_{n}\rightarrow\omega^{-}$, we have
\begin{align}
&\left|\int_{a}^{b}\lambda(\omega^{-}_{n},\omega,s)ds-\int_{a}^{b}\lambda(\omega^{-},\omega,s)ds\right|
\\
&\leq\alpha\int_{a}^{b}\left|\sum_{\tau\in\omega^{-}_{n}}h(s-\tau)-\sum_{\tau\in\omega^{-}}h(s-\tau)\right|ds\rightarrow\infty,\nonumber
\end{align}
as $n\rightarrow\infty$, which implies that $P^{\omega^{-}_{n}}\rightarrow P^{\omega^{-}}$.

If the Hawkes process does not have finite range of memory, then, we should use the specific features of the Hawkes process to obtain the upper bound.
\end{remark}

Before we proceed, let us prove an easy but very useful lemma that we will use repeatedly in the proofs of the estimates in this paper.

\begin{lemma}
Let $f(\omega,s)$ be $\mathcal{F}^{-\infty}_{s}$ progressively measurable and predictable. Then,
\begin{equation}
\mathbb{E}\left[e^{\int_{0}^{t}f(\omega,s)dN_{s}}\right]\leq\mathbb{E}\left[e^{\int_{0}^{t}(e^{2f(\omega,s)}-1)\lambda(\omega,s)ds}\right]^{1/2}.
\end{equation} 
\end{lemma}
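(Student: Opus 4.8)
The plan is to derive the inequality by combining the Cauchy--Schwarz inequality with the classical supermartingale property of the Dol\'eans--Dade exponential attached to a compensated point-process integral. Recall that for a bounded $\mathcal{F}^{-\infty}_{s}$-predictable function $g(\omega,s)$ the process
\[
\mathcal{E}_{t}(g):=\exp\left(\int_{0}^{t}g(\omega,s)\,dN_{s}-\int_{0}^{t}\bigl(e^{g(\omega,s)}-1\bigr)\lambda(\omega,s)\,ds\right)
\]
is a nonnegative local martingale under $P$ (it solves $d\mathcal{E}_{t}=\mathcal{E}_{t-}\bigl(e^{g}-1\bigr)(dN_{t}-\lambda_{t}\,dt)$), hence a supermartingale, so that $\mathbb{E}[\mathcal{E}_{t}(g)]\le\mathcal{E}_{0}(g)=1$; this is standard, see Lipster and Shiryaev \cite{Lipster} or Daley and Vere-Jones \cite{Daley}.

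The core of the argument is then one line. I would write the algebraic identity
\[
e^{\int_{0}^{t}f(\omega,s)\,dN_{s}}=\mathcal{E}_{t}(2f)^{1/2}\cdot\exp\left(\tfrac{1}{2}\int_{0}^{t}\bigl(e^{2f(\omega,s)}-1\bigr)\lambda(\omega,s)\,ds\right),
\]
which follows immediately from the definition of $\mathcal{E}_{t}(2f)=\exp\bigl(\int_{0}^{t}2f\,dN_{s}-\int_{0}^{t}(e^{2f}-1)\lambda\,ds\bigr)$, and apply Cauchy--Schwarz to the two factors on the right-hand side:
\[
\mathbb{E}\left[e^{\int_{0}^{t}f\,dN_{s}}\right]\le\mathbb{E}\bigl[\mathcal{E}_{t}(2f)\bigr]^{1/2}\cdot\mathbb{E}\left[e^{\int_{0}^{t}(e^{2f}-1)\lambda\,ds}\right]^{1/2}.
\]
Since $\mathbb{E}[\mathcal{E}_{t}(2f)]\le1$ by the supermartingale bound applied with $g=2f$, the first factor is at most $1$, which is exactly the claimed inequality.

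The only point needing care is the supermartingale bound $\mathbb{E}[\mathcal{E}_{t}(2f)]\le1$ when $f$ is not assumed bounded, and this is the mild technical obstacle rather than a genuine one. I would handle it by a routine truncation: prove the estimate first for $f_{M}:=(-M)\vee f\wedge M$, where $\mathcal{E}_{t}(2f_{M})$ is a bona fide supermartingale, and then let $M\to\infty$, using that $\int_{0}^{t}f_{M}\,dN_{s}\to\int_{0}^{t}f\,dN_{s}$ almost surely (only finitely many points fall in $[0,t]$) together with Fatou's lemma on the left and the corresponding limiting argument on the right. In the later applications of this lemma $f$ is in fact bounded, so the substantive content is the Cauchy--Schwarz step above and no serious difficulty arises.
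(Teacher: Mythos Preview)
Your proof is correct and takes essentially the same approach as the paper: split $e^{\int_0^t f\,dN_s}$ as the square root of the Dol\'eans--Dade exponential $\mathcal{E}_t(2f)$ times $\exp\bigl(\tfrac12\int_0^t(e^{2f}-1)\lambda\,ds\bigr)$, apply Cauchy--Schwarz, and use $\mathbb{E}[\mathcal{E}_t(2f)]\le 1$. The paper simply asserts that $\mathcal{E}_t(2f)$ is a martingale, whereas you more carefully invoke the supermartingale property and add a truncation remark for unbounded $f$; this is a mild refinement but not a different argument.
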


\begin{proof}
Since $\exp\left\{\int_{0}^{t}2f(\omega,s)dN_{s}-\int_{0}^{t}(e^{2f(\omega,s)}-1)\lambda(\omega,s)ds\right\}$ is a martingale, 
by Cauchy-Schwarz inequality,
\begin{align}
\mathbb{E}\left[e^{\int_{0}^{t}f(\omega,s)dN_{s}}\right]
&=\mathbb{E}\left[e^{\frac{1}{2}\int_{0}^{t}2f(\omega,s)dN_{s}-\frac{1}{2}\int_{0}^{t}(e^{2f(\omega,s)}-1)\lambda(\omega,s)ds
+\frac{1}{2}\int_{0}^{t}(e^{2f(\omega,s)}-1)\lambda(\omega,s)ds}\right]
\\
&\leq\mathbb{E}\left[e^{\int_{0}^{t}(e^{2f(\omega,s)}-1)\lambda(\omega,s)ds}\right]^{1/2}.\nonumber
\end{align}
\end{proof}

Let us define $\mathcal{C}_{T}$ as
\begin{align}
\mathcal{C}_{T}&=\bigg\{F(\omega):=\int_{0}^{T}f(\omega,s)dN_{s}-\int_{0}^{T}(e^{f(\omega,s)}-1)\lambda(\omega,s)ds, 
\\
&\qquad\qquad\qquad\qquad\qquad\qquad\qquad f(\omega,s)\in\mathcal{B}(\mathcal{F}^{0}_{s})\cap C(\Omega\times\mathbb{R})\bigg\}.\nonumber
\end{align}
Here $\lambda(\omega,s)$ is $\mathcal{F}^{-\infty}_{s}$ progressively measurable and predictable, and 
$f(\omega,s)\in\mathcal{B}(\mathcal{F}^{0}_{s})\cap C(\Omega\times\mathbb{R})$ means that 
$f$ is $\mathcal{F}^{0}_{s}$ progressively measurable, predictable and also bounded and continuous.

\begin{lemma}\label{expectation}
For any $T>0$ and $F\in\mathcal{C}_{T}$, we have, for any $t>0$,
\begin{equation}
\mathbb{E}^{P^{\emptyset}}\left[e^{\frac{1}{T}\int_{0}^{t}F(\theta_{s}\omega)ds}\right]\leq 1.
\end{equation}
\end{lemma}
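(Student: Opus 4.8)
The plan is to reduce the claim to the martingale property of the Doléans-Dade exponential and use a Riemann-sum / additivity trick, exactly the type of argument Donsker and Varadhan use for level-3 bounds. Recall $F(\omega)=\int_{0}^{T}f(\omega,s)dN_{s}-\int_{0}^{T}(e^{f(\omega,s)}-1)\lambda(\omega,s)ds$ with $f\in\mathcal{B}(\mathcal{F}^{0}_{s})\cap C(\Omega\times\mathbb{R})$, so that $e^{F(\omega)}$ is precisely the terminal value (run to time $T$) of the exponential martingale $Z_{t}=\exp\{\int_{0}^{t}f\,dN_{s}-\int_{0}^{t}(e^{f}-1)\lambda\,ds\}$ under $P^{\emptyset}$; in particular $\mathbb{E}^{P^{\emptyset}}[e^{F(\omega)}]=\mathbb{E}[Z_T]=1$ (here one uses that $f$ is bounded so the local martingale is a true martingale, $N$ being a point process with locally integrable intensity). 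The quantity $\frac{1}{T}\int_{0}^{t}F(\theta_{s}\omega)\,ds$ is a continuous average of shifted copies of $F$; the idea is to first handle $t=NT$ for integer $N$ by exploiting additivity of $\int_0^{\cdot}F(\theta_s\omega)\,ds$ over disjoint blocks of length $T$ together with the Markov-type tower structure of $P^{\emptyset}$ along the filtration $\mathcal{F}^{0}_{t}$, and then pass to general $t$.

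The key steps, in order. First, I would observe the cocycle/additivity identity: writing $G_t(\omega)=\int_0^t F(\theta_s\omega)\,ds$, one has $G_{t+r}(\omega)=G_t(\omega)+G_r(\theta_t\omega)$, and each increment $\int_{a}^{a+T}\hspace{-2pt}\cdots$ is measurable with respect to the configuration on $[a,a+T]$. Second, by Jensen (convexity of $x\mapsto e^{x}$) applied to the average over $s\in[0,T]$ inside one block, or more directly by a telescoping conditioning argument, reduce to controlling a product of one-block exponentials. Third, and this is the heart: show that for each block $[kT,(k+1)T]$, $\mathbb{E}^{P^{\emptyset}}\big[e^{\frac{1}{T}\int_{kT}^{(k+1)T}F(\theta_s\omega)\,ds}\,\big|\,\mathcal{F}^{0}_{kT}\big]\le 1$ uniformly in the past history — this is where the Lipschitz and growth assumptions on $\lambda$, together with $\|h\|_{L^1}<\infty$, enter, since under $P^{\omega^-}$ for an arbitrary past $\omega^-$ the drift term $(e^f-1)\lambda$ overcompensates the jump term in expectation; one uses $\mathbb{E}[e^{\int f dN}]\le \mathbb{E}[e^{\int(e^f-1)\lambda\,ds}]$-type bounds or directly the fact that $e^{\int_a^b f\,dN-\int_a^b(e^f-1)\lambda\,ds}$ has conditional expectation $1$. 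Fourth, iterate the conditioning from $t$ downward through the blocks to get $\mathbb{E}^{P^{\emptyset}}[e^{G_{NT}(\omega)/T}]\le 1$, and finally absorb the fractional last block of length $t-NT<T$ by the same one-block estimate (the exponent there is a sub-average, so its exponential still has conditional expectation $\le 1$ after a trivial bound).

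The main obstacle I anticipate is the uniform-in-history one-block estimate in step three: $f$ is only $\mathcal{F}^{0}_{s}$-measurable (depends on the post-time-$0$ configuration), but after shifting by $s$ the relevant intensity is $\lambda(\sum_{\tau\in\omega^-\cup\omega[0,\cdot)}h(\cdot-\tau))$, which does see the created history; one must check that the exponential martingale identity genuinely holds block-by-block regardless of that history, i.e. that $e^{\int_{kT}^{(k+1)T} f(\theta_s\omega,\cdot)\,dN - \int_{kT}^{(k+1)T}(e^{f}-1)\lambda\,ds}$ is a mean-one $\mathcal{F}^{0}_{(k+1)T}/\mathcal{F}^{0}_{kT}$ object. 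This is where continuity and boundedness of $f$ (so the stochastic exponential is a true martingale, not merely local) and the assumed regularity of $\lambda,h$ (so the intensity is well-defined and finite given any admissible past) do the work; once that is in hand, the telescoping and the passage from integer multiples of $T$ to arbitrary $t$ are routine applications of Jensen's inequality and the tower property.
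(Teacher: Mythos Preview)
Your ingredients are right—the exponential-martingale identity $\mathbb{E}^{P^{\omega^{-}}}[e^{F(\omega)}]=1$ for any past $\omega^{-}$, Jensen's inequality, and a tower/telescoping argument—but the block decomposition you propose has an overlap that breaks step four. The increment $B_{k}=\frac{1}{T}\int_{kT}^{(k+1)T}F(\theta_{s}\omega)\,ds$ is \emph{not} $\mathcal{F}^{0}_{(k+1)T}$-measurable: since $F(\theta_{s}\omega)$ depends on the configuration on $[s,s+T]$, the integral $B_{k}$ depends on $[kT,(k+2)T]$. (Your measurability claim in step one is therefore off by $T$ in the window width.) Consequently, when you condition on $\mathcal{F}^{0}_{(k+1)T}$ to peel off $e^{B_{k+1}}$, the factor $e^{B_{k}}$ cannot be pulled outside the conditional expectation, and the telescoping collapses. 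Your step-three estimate $\mathbb{E}\big[e^{B_{k}}\mid\mathcal{F}^{0}_{kT}\big]\le 1$ is correct in isolation (Jensen inside the block plus the tower property give it), but these bounds do not combine multiplicatively because adjacent $B_{k}$'s look at overlapping time windows.

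The paper's fix is a small but essential reordering. Rewrite $\int_{0}^{t}F(\theta_{s}\omega)\,ds=\int_{0}^{T}\psi(s)\,ds$ with $\psi(s)=\sum_{k:\,s+kT\le t}F(\theta_{s+kT}\omega)$, apply Jensen to the \emph{outer} average over $s\in[0,T]$ to get $\mathbb{E}\big[e^{\frac{1}{T}\int_{0}^{T}\psi(s)\,ds}\big]\le\frac{1}{T}\int_{0}^{T}\mathbb{E}\big[e^{\psi(s)}\big]\,ds$, and observe that for each fixed $s$ the summands $F(\theta_{s+kT}\omega)$ sit on the \emph{disjoint} windows $[s+kT,s+(k+1)T]$. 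Now iterated conditioning gives $\mathbb{E}^{P^{\emptyset}}[e^{\psi(s)}]=1$ exactly, for every $s$, and the claim follows for arbitrary $t>0$ without any separate treatment of a fractional last block. The point is that Jensen must be spent on the averaging that creates the overlap, leaving a clean non-overlapping sum for the martingale/tower step; your scheme spends Jensen inside each block and then tries to telescope the overlapping pieces, which cannot work.
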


\begin{proof}
For any $t>0$, writing $\psi(s)=\sum_{k:s+kT\leq t}F(\theta_{s+kT}\omega)$,
\begin{align}
\mathbb{E}^{P^{\emptyset}}\left[e^{\frac{1}{T}\int_{0}^{t}F(\theta_{s}\omega)ds}\right]
&=\mathbb{E}^{P^{\emptyset}}\left[e^{\frac{1}{T}\int_{0}^{T}\psi(s)ds}\right]
\\
&\leq\frac{1}{T}\int_{0}^{T}\mathbb{E}^{P^{\emptyset}}\left[e^{\psi(s)}\right]ds=1,\nonumber
\end{align}
by Jensen's inequality and the fact that $\mathbb{E}^{P^{\emptyset}}\left[e^{\psi(s)}\right]=1$ 
by iteratively conditioning since $\mathbb{E}^{P^{\omega^{-}}}\left[e^{F(\omega)}\right]=1$ for any $\omega^{-}$.
\end{proof}

\begin{remark}
Under $P^{\emptyset}$, the $\mathcal{F}^{-\infty}_{s}$ progressively measurable rate function $\lambda$ 
is well defined since it only creates a history between time $0$ and time $t$. 
Similary, in the proof in Lemma \ref{expectation}, $\mathbb{E}^{P^{\omega^{-}}}\left[e^{F(\omega)}\right]=1$ for any $\omega^{-}$ 
should be interpreted as the expectation is $1$ given any history created between time $0$ and $t$, which is well defined.
\end{remark}

Next, we need to compare $\frac{1}{T}\int_{0}^{t}F(\theta_{s}\omega_{t})ds$ and $\frac{1}{T}\int_{0}^{t}F(\theta_{s}\omega)ds$.

\begin{lemma}\label{difference}
For any $q>0$, $T>0$ and $F\in\mathcal{C}_{T}$,
\begin{equation}
\limsup_{t\rightarrow\infty}\frac{1}{t}\log\mathbb{E}^{P^{\emptyset}}
\left[\exp\left\{q\left|\frac{1}{T}\int_{0}^{t}F(\theta_{s}\omega_{t})ds-\frac{1}{T}\int_{0}^{t}F(\theta_{s}\omega)ds\right|\right\}\right]=0.
\end{equation}
\end{lemma}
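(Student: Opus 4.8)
Throughout write $D_t$ for the quantity inside the exponential in the statement, let $f$ be the bounded continuous function representing $F\in\mathcal{C}_T$, and set $H(x)=\int_x^\infty h(y)\,dy$, so that $H(0)=\|h\|_{L^1}$ and $H$ decreases to $0$. The plan is to split the time average $\frac1T\int_0^t\cdot\,ds$ into a ``bulk'' range $s\in[0,t-T]$ and a ``boundary'' range $s\in(t-T,t]$, to dominate $|F(\theta_s\omega_t)-F(\theta_s\omega)|$ pathwise on each range by nonnegative functionals of the configuration, and then to bound the exponential moments of those functionals under $P^{\emptyset}$ using the estimates of Section \ref{superexpestimates}.

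The pathwise bound is obtained range by range. On the bulk range the decisive remark is that $s\le t-T$ and $0\le u\le T$ force $s+u\le t$, so $\omega_t$ and $\omega$ agree on $[0,s+u]$; hence $N_u(\theta_s\omega_t)=N_u(\theta_s\omega)$, $f(\theta_s\omega_t,u)=f(\theta_s\omega,u)$, and $F(\theta_s\omega_t)-F(\theta_s\omega)$ collapses to the discrepancy in the drift term alone, which is produced only by the extra points the shifted periodization sees in its past, namely the copies $\{\tau-jt:\tau\in\omega\cap[0,t],\ j\ge1\}$. Since $\lambda(\cdot)$ is increasing and $\alpha$-Lipschitz,
\[
|F(\theta_s\omega_t)-F(\theta_s\omega)|\le\alpha\bigl(1+e^{\|f\|_{L^\infty}}\bigr)\int_0^T\sum_{j\ge1}\sum_{\tau\in\omega\cap[0,t]}h(s+u-\tau+jt)\,du;
\]
integrating over $s$, enlarging $\int_0^{t-T}$ to $\int_0^t$, telescoping the sum over $j$ (using $H(\infty)=0$), and integrating over $u$ with $H$ decreasing gives $\frac1T\int_0^{t-T}|F(\theta_s\omega_t)-F(\theta_s\omega)|\,ds\le\alpha(1+e^{\|f\|_{L^\infty}})\sum_{\tau\in\omega\cap[0,t]}H(t-\tau)$. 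On the boundary range I use the crude bounds $|F(\theta_s\omega)|\le\|f\|_{L^\infty}N[s,s+T]+(1+e^{\|f\|_{L^\infty}})\int_0^T\lambda(\omega,s+u)\,du$ and $\lambda(\omega,v)\le\lambda(0)+\alpha\sum_{\tau<v}h(v-\tau)$; integrating over $s\in(t-T,t]$ and applying Tonelli, the $\omega$-contribution is at most a constant depending only on $T,\alpha,\lambda(0),\|f\|_{L^\infty}$, plus a point count on the fixed window $[t-T,t+T]$, plus a term of the form $\sum_{\tau\in\omega\cap[0,t-T]}H(t-T-\tau)$; the periodization is handled the same way for its unwrapped part and for the single wrapped-around copy $\omega\cap[0,t]+t$ (which only contributes point counts on the fixed window $[0,T]$), while its past periodic copies add a term in which every argument of $h$ is at least $t-T$ and the sum over $j$ costs at most a factor $\|h\|_{L^1}/t$, hence at most a constant times $N_t/t$. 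Collecting,
\[
D_t\le C_1+C_2\,\frac{N_t}{t}+C_3\sum_{\tau\in\omega\cap[0,t]}H(t-\tau)+C_4\,\Xi_t,
\]
with $C_i=C_i(T,\alpha,\lambda(0),\|f\|_{L^\infty})$ and $\Xi_t$ a finite sum of point counts over $t$-independent windows located near $0$ or near $t$ (the $\sum_{\tau\le t-T}H(t-T-\tau)$ term is of the same type and is absorbed similarly).

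For the endgame, fix $\epsilon>0$ and choose $M=M(\epsilon)$ with $H(M)\le\epsilon$; splitting points according to whether $t-\tau\ge M$ yields $\sum_{\tau\in\omega\cap[0,t]}H(t-\tau)\le\epsilon N_t+\|h\|_{L^1}N[t-M,t]$, which turns the dangerous term into $\epsilon N_t$ plus a count over an ($\epsilon$-dependent but $t$-independent) window near $t$. By Hölder's inequality $\mathbb{E}^{P^{\emptyset}}[e^{qD_t}]$ factors into exponential moments of the individual terms; all factors except the one from $\epsilon N_t$ are bounded uniformly in $t$ — exponential moments of $N_t/t$ and of point counts over fixed-length windows near $0$ or near $t$ — while $\mathbb{E}^{P^{\emptyset}}[e^{c\epsilon N_t}]\le e^{(\Lambda(c\epsilon)+o(1))t}$, where $\Lambda(\theta):=\limsup_t\frac1t\log\mathbb{E}^{P^{\emptyset}}[e^{\theta N_t}]$ is, for $\theta$ near $0$, finite, convex, and vanishing at $\theta=0$. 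These uniform exponential-moment bounds and the finiteness of $\Lambda$ near $0$ are precisely the super\-exponential/exponential-moment estimates for the Hawkes process with sublinear rate function proved in Section \ref{superexpestimates} (cf.\ Lemma \ref{indfinite} and Lemma \ref{finalestimate}). Hence $\limsup_{t\to\infty}\frac1t\log\mathbb{E}^{P^{\emptyset}}[e^{qD_t}]\le\kappa\,\Lambda(c\epsilon)$ for constants $\kappa,c$ independent of $\epsilon$; letting $\epsilon\downarrow0$ and using $\Lambda(0^{+})=0$, together with the trivial lower bound $\ge0$ (since $D_t\ge0$ so $\mathbb{E}^{P^\emptyset}[e^{qD_t}]\ge1$), completes the proof.

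The step I expect to be the genuine obstacle is the boundary layer $s\in(t-T,t]$: there $F(\theta_s\omega_t)$ and $F(\theta_s\omega)$ differ in an essential way, both because the periodization wraps around and, more seriously, because the intensity $\lambda$ has long-range (possibly infinite-range) memory, so one must absorb the intensity evaluated near time $t$ into point counts and then control the exponential moments of those counts uniformly in $t$ — equivalently, one must know that the random history the Hawkes process has accumulated by time $t$ is tame with overwhelming probability. Supplying that uniform control for a non-Markovian process with a slowly decaying exciting function is exactly what the estimates of Section \ref{superexpestimates} are for, and it is what makes the whole argument close.
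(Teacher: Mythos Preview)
Your proof is correct and follows the same overall architecture as the paper: split $s\in[0,t]$ into a bulk $[0,t-T]$ and a boundary $(t-T,t]$, observe that on the bulk the only discrepancy between $F(\theta_s\omega_t)$ and $F(\theta_s\omega)$ comes from the periodic past copies feeding into $\lambda$, and reduce everything to exponential-moment estimates for $N_t$ and for point counts on fixed windows.

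The one genuine methodological difference is how you dispose of the key bulk term $\sum_{\tau\le t}H(t-\tau)$. The paper keeps it as a stochastic integral $\int_0^t g(u)\,dN_u$ with $g(u)=C\int_0^t h(s+t-u)\,ds\le CH(t-u)$, applies the exponential-martingale/Cauchy--Schwarz trick to pass to $\int_0^t(e^{2g(u)}-1)\lambda(u)\,du$, and then uses sublinearity of $\lambda$ together with $\int_0^t(e^{CH(v)}-1)\,dv=o(t)$. You instead split pathwise, $\sum_\tau H(t-\tau)\le\epsilon N_t+\|h\|_{L^1}N[t-M(\epsilon),t]$, and close with the finiteness and continuity at $0$ of $\Lambda(\theta)=\limsup_t\frac1t\log\mathbb E[e^{\theta N_t}]$ plus Lemma~\ref{indfinite}. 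Your route is more modular---it reduces the whole lemma to the two black-box inputs $\mathbb E^{P^\emptyset}[e^{\theta N_t}]\le e^{C_\theta t}$ and $\mathbb E^{P^\emptyset}[e^{qN[t,t+T']}]=e^{o(t)}$---while the paper's route is more self-contained at this spot but reuses the martingale trick once more.

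Two small imprecisions worth tightening. First, on the boundary range the past periodic copies contribute not only $\|h\|_{L^1}N_t/t$ but also a $h(t-T)\,N_t$ term (the $j=1$ copy); since $h(t-T)\to 0$ this is still of the form $o(1)\cdot N_t$ and is absorbed in your endgame exactly as $\epsilon N_t$ is. Second, the exponential moments of $N[t-T,t+T]$ and $N[t-M(\epsilon),t]$ that you call ``bounded uniformly in $t$'' are, by Lemma~\ref{indfinite}, only shown to be $e^{o(t)}$; that weaker statement is precisely what you need after taking $\frac1t\log$, so the conclusion is unaffected.
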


\begin{proof}
\begin{align}
&\left|\frac{1}{T}\int_{0}^{t}F(\theta_{s}\omega_{t})ds-\frac{1}{T}\int_{0}^{t}F(\theta_{s}\omega)ds\right|
\\
&\leq\left|\frac{1}{T}\int_{0}^{t}\int_{0}^{T}f(u,\theta_{s}\omega)dN_{u}ds-\frac{1}{T}\int_{0}^{t}
\int_{0}^{T}f(u,\theta_{s}\omega_{t})dN_{u}ds\right|\nonumber
\\
&\qquad\qquad\qquad+\bigg|\frac{1}{T}\int_{0}^{t}\int_{0}^{T}(e^{f(u,\theta_{s}\omega)}-1)\lambda(\theta_{s}\omega,u)duds\nonumber
\\
&\qquad\qquad\qquad\qquad\qquad\qquad
-\frac{1}{T}\int_{0}^{t}\int_{0}^{T}(e^{f(u,\theta_{s}\omega_{t})}-1)\lambda(\theta_{s}\omega_{t},u)duds\bigg|.\nonumber
\end{align}
It is easy to see that $\int_{0}^{T}f(u,\theta_{s}\omega)dN_{u}ds$ is $\mathcal{F}^{s}_{s+T}$-measurable and that
\begin{equation}
\int_{0}^{T}f(u,\theta_{s}\omega)dN_{u}ds=\int_{0}^{T}f(u,\theta_{s}\omega_{t})dN_{u}ds
\end{equation}
for any $0\leq s\leq t-T$. Hence,
\begin{align}
&\left|\frac{1}{T}\int_{0}^{t}\int_{0}^{T}f(u,\theta_{s}\omega)dN_{u}ds-\frac{1}{T}\int_{0}^{t}\int_{0}^{T}f(u,\theta_{s}\omega_{t})dN_{u}ds\right|
\\
&\leq\frac{1}{T}\int_{t-T}^{t}\int_{0}^{T}|f(u,\theta_{s}\omega)|dN_{u}ds+\frac{1}{T}\int_{t-T}^{t}\int_{0}^{T}|f(u,\theta_{s}\omega_{t})|dN_{u}ds\nonumber
\\
&\leq\frac{\Vert f\Vert_{L^{\infty}}}{T}\int_{t-T}^{t}N[s,s+T](\omega)ds+\frac{\Vert f\Vert_{L^{\infty}}}{T}\int_{t-T}^{t}N[s,s+T](\omega_{t})ds\nonumber
\\
&\leq\frac{\Vert f\Vert_{L^{\infty}}}{T}\left[N[t-T,t+T](\omega)+N[t-T,t+T](\omega_{t})\right]\nonumber
\\
&=\frac{\Vert f\Vert_{L^{\infty}}}{T}\left[N[t-T,t+T](\omega)+N[t-T,T](\omega)+N[0,T](\omega)\right].\nonumber
\end{align}
By H\"{o}lder's inequality and Lemma \ref{indfinite}, we have
\begin{align}
&\limsup_{t\rightarrow\infty}\frac{1}{t}\log\mathbb{E}^{P^{\emptyset}}
\left[e^{\left|\frac{1}{T}\int_{0}^{t}\int_{0}^{T}f(u,\theta_{s}\omega)dN_{u}ds
-\frac{1}{T}\int_{0}^{t}\int_{0}^{T}f(u,\theta_{s}\omega_{t})dN_{u}ds\right|}\right]
\\
&\leq\limsup_{t\rightarrow\infty}\frac{1}{t}\log\mathbb{E}^{P^{\emptyset}}
\left[e^{\frac{\Vert f\Vert_{L^{\infty}}}{T}\left[N[t-T,t+T](\omega)+N[t-T,T](\omega)+N[0,T](\omega)\right]}\right]=0.\nonumber
\end{align}
Furthermore,
\begin{align}
&\left|\frac{1}{T}\int_{0}^{t}\int_{0}^{T}(e^{f(u,\theta_{s}\omega)}-1)\lambda(\theta_{s}\omega,u)duds
-\frac{1}{T}\int_{0}^{t}\int_{0}^{T}(e^{f(u,\theta_{s}\omega_{t})}-1)\lambda(\theta_{s}\omega_{t},u)duds\right|
\\
&\leq\frac{1}{T}\int_{0}^{t}\int_{0}^{T}\left|e^{f(u,\theta_{s}\omega)}-e^{f(u,\theta_{s}\omega_{t})}\right|\lambda(\theta_{s}\omega,u)duds\nonumber
\\
&+\frac{1}{T}\int_{0}^{t}\int_{0}^{T}(e^{f(u,\theta_{s}\omega_{t})}-1)\left|\lambda(\theta_{s}\omega_{t},u)-\lambda(\theta_{s}\omega,u)\right|duds.\nonumber
\end{align}
For the first term
\begin{align}
&\frac{1}{T}\int_{0}^{t}\int_{0}^{T}\left|e^{f(u,\theta_{s}\omega)}-e^{f(u,\theta_{s}\omega_{t})}\right|\lambda(\theta_{s}\omega,u)duds
\\
&=\frac{1}{T}\int_{t-T}^{t}\int_{0}^{T}\left|e^{f(u,\theta_{s}\omega)}-e^{f(u,\theta_{s}\omega_{t})}\right|\lambda(\theta_{s}\omega,u)duds\nonumber
\\
&\leq\frac{2 e^{\Vert f\Vert_{L^{\infty}}}}{T}\int_{t-T}^{t}\int_{0}^{T}\lambda(\theta_{s}\omega,u)duds\nonumber
\\
&=\frac{2 e^{\Vert f\Vert_{L^{\infty}}}}{T}\int_{t-T}^{t}\int_{0}^{T}\lambda\left(\sum_{\tau\in\omega[0,u+s)}h(u+s-\tau)\right)duds\nonumber
\\
&\leq 2 e^{\Vert f\Vert_{L^{\infty}}}TC_{\epsilon}
+\frac{2 e^{\Vert f\Vert_{L^{\infty}}}}{T}\epsilon\int_{t-T}^{t}\int_{0}^{T}\sum_{\tau\in\omega[0,u+s)}h(u+s-\tau)duds\nonumber
\\
&\leq 2 e^{\Vert f\Vert_{L^{\infty}}}TC_{\epsilon}+\frac{2 e^{\Vert f\Vert_{L^{\infty}}}}{T}\epsilon\int_{t-T}^{t}\int_{0}^{T}N[0,u+s]h(0)duds\nonumber
\\
&\leq 2 e^{\Vert f\Vert_{L^{\infty}}}TC_{\epsilon}+2 e^{\Vert f\Vert_{L^{\infty}}}\epsilon\int_{t-T}^{t}N[0,s+T]h(0)ds\nonumber
\\
&\leq 2 e^{\Vert f\Vert_{L^{\infty}}}TC_{\epsilon}+2 e^{\Vert f\Vert_{L^{\infty}}}\epsilon T(N[0,t]+N[t,t+T])h(0).\nonumber
\end{align}
Therefore,
\begin{equation}
\limsup_{t\rightarrow\infty}\frac{1}{t}\log\mathbb{E}^{P^{\emptyset}}
\left[e^{\frac{1}{T}\int_{0}^{t}\int_{0}^{T}\left|e^{f(u,\theta_{s}\omega)}
-e^{f(u,\theta_{s}\omega_{t})}\right|\lambda(\theta_{s}\omega,u)duds}\right]\leq c(\epsilon),
\end{equation}
where $c(\epsilon)\rightarrow 0$ as $\epsilon\rightarrow 0$. Since it is bounded above for any $c(\epsilon)$, it is bounded above by $0$.

For the second term,
\begin{align}
&\frac{1}{T}\int_{0}^{t}\int_{0}^{T}(e^{f(u,\theta_{s}\omega_{t})}-1)\left|\lambda(\theta_{s}\omega_{t},u)-\lambda(\theta_{s}\omega,u)\right|duds
\\
&\leq\frac{e^{\Vert f\Vert_{L^{\infty}}}+1}{T}\int_{0}^{t}\int_{0}^{T}\alpha\left|\sum_{\tau\in\omega_{t}[0,u+s)
\cup(\omega_{t})^{-}}h(u+s-\tau)-\sum_{\tau\in\omega[0,u+s)}h(u+s-\tau)\right|duds\nonumber
\\
&\leq\frac{e^{\Vert f\Vert_{L^{\infty}}}+1}{T}\int_{t-T}^{t}\int_{0}^{T}\alpha\sum_{\tau\in\omega_{t}[0,u+s)}h(u+s-\tau)duds\nonumber
\\
&+\frac{e^{\Vert f\Vert_{L^{\infty}}}+1}{T}\int_{t-T}^{t}\int_{0}^{T}\alpha\sum_{\tau\in\omega[0,u+s)}h(u+s-\tau)duds\nonumber
\\
&+\frac{e^{\Vert f\Vert_{L^{\infty}}}+1}{T}\int_{0}^{t}\int_{0}^{T}\alpha\sum_{\tau\in(\omega_{t})^{-}}h(u+s-\tau)duds\nonumber
\end{align}
Assume that $h(\cdot)$ is decreasing and $\lim_{z\rightarrow\infty}\frac{\lambda(z)}{z}=0$. 
By applying Jensen's inequality twice, we can estimate the second term above,
\begin{align}
&\mathbb{E}^{P^{\emptyset}}\left[e^{\frac{e^{\Vert f\Vert_{L^{\infty}}}+1}{T}\alpha\int_{t-T}^{t}\int_{0}^{T}\int_{0}^{u+s}h(u+s-v)dN_{v}duds}\right]
\\
&\leq\frac{1}{T}\int_{t-T}^{t}\mathbb{E}^{P^{\emptyset}}\left[e^{(e^{\Vert f\Vert_{L^{\infty}}}+1)
\alpha\int_{0}^{T}\int_{0}^{u+s}h(u+s-v)dN_{v}du}\right]ds\nonumber
\\
&\leq\frac{1}{T^{2}}\int_{t-T}^{t}\int_{0}^{T}\mathbb{E}^{P^{\emptyset}}\left[e^{(e^{\Vert f\Vert_{L^{\infty}}}+1)
\alpha T\int_{0}^{u+s}h(u+s-v)dN_{v}}\right]duds\nonumber
\\
&\leq\frac{1}{T^{2}}\int_{t-T}^{t}\int_{0}^{T}\mathbb{E}^{P^{\emptyset}}\left[e^{\int_{0}^{u+s}C(\alpha,T,h)\lambda(v)dv}\right]^{1/2}duds\nonumber
\\
&\leq\frac{e^{C(\alpha,T,h)C_{\epsilon}}}{T^{2}}\int_{t-T}^{t}
\int_{0}^{T}\mathbb{E}^{P^{\emptyset}}\left[e^{\epsilon C(\alpha,T,h)N[0,u+s]h(0)}\right]^{1/2}duds\nonumber
\\
&\leq e^{C(\alpha,T,h)C_{\epsilon}}\mathbb{E}^{P^{\emptyset}}\left[e^{\epsilon C(\alpha,T,h)N[0,t+T]h(0)}\right]^{1/2}.\nonumber
\end{align}
where $C(\alpha,T,h)=\exp(2(e^{\Vert f\Vert_{L^{\infty}}}+1)\alpha T h(0))-1$. Thus,
\begin{equation}
\limsup_{t\rightarrow\infty}\frac{1}{t}\log\mathbb{E}^{P^{\emptyset}}
\left[e^{\frac{e^{\Vert f\Vert_{L^{\infty}}}+1}{T}\alpha\int_{t-T}^{t}\int_{0}^{T}\int_{0}^{u+s}h(u+s-v)dN_{v}duds}\right]=0.
\end{equation}
Similarly, we can estimate the first term.

For the third term, by Jensen's inequality, we have
\begin{align}
&\mathbb{E}^{P^{\emptyset}}\left[e^{\frac{e^{\Vert f\Vert_{L^{\infty}}}+1}{T}
\int_{0}^{t}\int_{0}^{T}\alpha\sum_{\tau\in(\omega_{t})^{-}}h(u+s-\tau)duds}\right]
\\
&\leq\frac{1}{T}\int_{0}^{T}\mathbb{E}^{P^{\emptyset}}
\left[e^{\alpha(\exp(\Vert f\Vert_{L^{\infty}})+1)\int_{0}^{t}\sum_{\tau\in(\omega_{t})^{-}}h(u+s-\tau)ds}\right]du\nonumber
\\
&\leq\mathbb{E}^{P^{\emptyset}}\left[e^{\alpha(\exp(\Vert f\Vert_{L^{\infty}})+1)\int_{0}^{t}\sum_{\tau\in(\omega_{t})^{-}}h(s-\tau)ds}\right]\nonumber
\\
&=\mathbb{E}^{P^{\emptyset}}\left[e^{\alpha(\exp(\Vert f\Vert_{L^{\infty}})+1)\int_{0}^{t}\int_{0}^{t}\sum_{k=0}^{\infty}h(s+kt+t-u)dN_{u}ds}\right]\nonumber
\end{align}
Since we assume that $h(\cdot)$ is decreasing, $\int_{kt}^{(k+1)t}h(s)ds\geq th((k+1)t)$. Thus
\begin{equation}
\sum_{k=0}^{\infty}h(s+kt+t-u)\leq h(s+t-u)+\frac{1}{t}\int_{s+t-u}^{\infty}h(v)dv.
\end{equation}
Let $C(\alpha,f)=\alpha(\exp(\Vert f\Vert_{L^{\infty}})+1)$ and $H(t)=\int_{t}^{\infty}h(s)ds$. Then,
\begin{align}
&\mathbb{E}^{P^{\emptyset}}\left[e^{\alpha(\exp(\Vert f\Vert_{L^{\infty}})+1)\int_{0}^{t}\int_{0}^{t}\sum_{k=0}^{\infty}h(s+kt+t-u)dN_{u}ds}\right]
\\
&\leq\mathbb{E}^{P^{\emptyset}}
\left[e^{C(\alpha,f)\int_{0}^{t}\int_{0}^{t}\frac{1}{t}H(s+t-u)dN_{u}ds+C(\alpha,f)\int_{0}^{t}
\left[\int_{0}^{t}h(s+t-u)ds\right]dN_{u}}\right]\nonumber
\\
&=\mathbb{E}^{P^{\emptyset}}\left[e^{\int_{0}^{t}\left[\frac{C(\alpha,f)}{t}\int_{0}^{t}H(s+t-u)ds\right]dN_{u}
+\int_{0}^{t}\left[\int_{0}^{t}C(\alpha,f)h(s+t-u)ds\right]dN_{u}}\right].\nonumber
\end{align}
Notice that
\begin{equation}
\mathbb{E}^{P^{\emptyset}}\left[e^{\int_{0}^{t}\left[\frac{C(\alpha,f)}{t}\int_{0}^{t}H(s+t-u)ds\right]dN_{u}}\right]
\leq\mathbb{E}^{P^{\emptyset}}\left[e^{\left[\frac{C(\alpha,f)}{t}\int_{0}^{t}H(s)ds\right]N_{t}}\right],
\end{equation}
where $\frac{C(\alpha,f)}{t}\int_{0}^{t}H(s)ds\rightarrow 0$ as $t\rightarrow\infty$, which implies that
\begin{equation}
\limsup_{t\rightarrow\infty}\frac{1}{t}\log\mathbb{E}^{P^{\emptyset}}\left[e^{\int_{0}^{t}
\left[\frac{C(\alpha,f)}{t}\int_{0}^{t}H(s+u)ds\right]dN_{u}}\right]=0.
\end{equation}
Moreover,
\begin{align}
&\mathbb{E}^{P^{\emptyset}}\left[e^{\int_{0}^{t}\left[\int_{0}^{t}C(\alpha,f)h(s+t-u)ds\right]dN_{u}}\right]
\\
&\leq\mathbb{E}^{P^{\emptyset}}\left[e^{\int_{0}^{t}(e^{2\int_{0}^{t}C(\alpha,f)h(s+t-u)ds}-1)\lambda(u)du}\right]^{1/2}\nonumber
\\
&\leq e^{\frac{1}{2}C_{\epsilon}\int_{0}^{t}(e^{2\int_{0}^{t}C(\alpha,f)h(s+t-u)ds}-1)du}
\mathbb{E}^{P^{\emptyset}}\left[e^{\int_{0}^{t}(e^{\int_{0}^{t}2C(\alpha,f)h(s+t-u)ds}-1)\epsilon\sum_{\tau<u}h(u-\tau)du}\right]^{1/2}\nonumber
\\
&\leq
e^{\frac{1}{2}C_{\epsilon}\int_{0}^{t}(e^{2C(\alpha,f)H(t-u)}-1)du}\mathbb{E}^{P^{\emptyset}}
\left[e^{\int_{0}^{t}(e^{2C(\alpha,f)\Vert h\Vert_{L^{1}}}-1)\epsilon\sum_{\tau<u}h(u-\tau)du}\right]^{1/2}\nonumber
\\
&\leq
e^{\frac{1}{2}C_{\epsilon}\int_{0}^{t}(e^{2C(\alpha,f)H(u)}-1)du}\mathbb{E}^{P^{\emptyset}}
\left[e^{\epsilon(e^{2C(\alpha,f)\Vert h\Vert_{L^{1}}}-1)\Vert h\Vert_{L^{1}}N_{t}}\right]^{1/2}\nonumber
\end{align}
Notice that it holds for any $\epsilon>0$ and $\frac{1}{t}\int_{0}^{t}(e^{2C(\alpha,f)H(u)}-1)du\rightarrow 0$ as $t\rightarrow\infty$, which implies that
\begin{equation}
\limsup_{t\rightarrow\infty}\frac{1}{t}\log\mathbb{E}^{P^{\emptyset}}\left[e^{\int_{0}^{t}\left[\int_{0}^{t}C(\alpha,f)h(s+t-u)ds\right]dN_{u}}\right]=0.
\end{equation}

Putting all these thing together and applying H\"{o}lder's inequality several times, we get, for any $q>0$, $T>0$ and $F\in\mathcal{C}_{T}$,
\begin{equation}
\limsup_{t\rightarrow\infty}\frac{1}{t}\log\mathbb{E}^{P^{\emptyset}}
\left[\exp\left\{q\left|\frac{1}{T}\int_{0}^{t}F(\theta_{s}\omega_{t})ds-\frac{1}{T}\int_{0}^{t}F(\theta_{s}\omega)ds\right|\right\}\right]=0.
\end{equation}
\end{proof}

\begin{lemma}\label{finitetoinfinite}
We have
\begin{equation}
\lim_{T\rightarrow\infty}\frac{1}{T}\sup_{F\in\mathcal{C}_{T}}\int_{\Omega}F(\omega)Q(d\omega)\geq H(Q).
\end{equation}
\end{lemma}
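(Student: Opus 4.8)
The plan is to feed a nearly optimal test function for the variational formula of Lemma~\ref{variational} on the unit interval into a ``tiling'' construction on $[0,T]$, the only subtle point being that the tiles must be $\mathcal{F}^0_s$-measurable even though the optimizer of $H(Q)$ may depend on the entire past. Fix $\epsilon>0$ and $M<\infty$. By Lemma~\ref{variational} choose a bounded continuous $\mathcal{F}^{-\infty}_s$-predictable $g_0(\omega,s)$, $0\le s\le1$, with $\mathbb{E}^Q[\int_0^1\lambda(1-e^{g_0})ds+\int_0^1 g_0\,dN_s]\ge\min(H(Q),M)-\epsilon/2$. Multiplying by a continuous cutoff in $s$ that vanishes near $0$ and $1$, and then replacing the result by an $\mathcal{F}^{s-m}_s$-measurable continuous approximation (a density argument: restricted to a window of bounded length the configuration space is compact in the vague topology, so finite-memory continuous functions are dense, and $\mathbb{E}^Q[N[0,1]]<\infty$ controls the rest), we obtain, for a suitable finite $m$, a bounded continuous test function $g(\omega,s)$ which is $\mathcal{F}^{s-m}_s$-measurable, vanishes for $s$ near $0$ and $1$, and still satisfies
\[
A:=\mathbb{E}^Q\!\left[\int_0^1\lambda(1-e^{g})\,ds+\int_0^1 g\,dN_s\right]\ \ge\ \min(H(Q),M)-\epsilon .
\]

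Next, for an integer $n>m$ define on $[0,n]$ the function $f(\omega,s)=0$ for $0\le s<m$ and $f(\omega,s)=g(\theta_k\omega,\,s-k)$ for $k\le s<k+1$ with $m\le k\le n-1$; for non-integer $T\in[n,n+1)$ extend $f\equiv0$ on $[n,T]$. Since $g(\cdot,u)$ depends on the configuration only through its restriction to $[u-m,u]$, the value $f(\omega,s)$ depends on $\omega$ only through $\omega[s-m,s]\subseteq\omega[0,s]$ (using $s\ge m$); together with the vanishing of $g$ near the integers this makes $f$ an admissible element of $\mathcal{B}(\mathcal{F}^0_s)\cap C(\Omega\times\mathbb{R})$, so
\[
F(\omega)=\int_0^T f\,dN_s-\int_0^T(e^{f}-1)\lambda\,ds\ \in\ \mathcal{C}_T ,
\]
with $F$ independent of the choice of $T\in[n,n+1)$.

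It remains to evaluate $\int_\Omega F\,dQ$. By the compensator identity $\mathbb{E}^Q[\int_0^T f\,dN_s]=\mathbb{E}^Q[\int_0^T f\,d\hat A_s]$ ($\hat A$ the $Q$-compensator of $N$),
\[
\int_\Omega F\,dQ=\sum_{k=m}^{n-1}\mathbb{E}^Q\!\left[\int_k^{k+1}f\,d\hat A_s+\int_k^{k+1}(1-e^{f})\lambda\,ds\right],
\]
the block $[0,m]$ contributing $0$. On the $k$-th block put $s=k+u$: the Hawkes rate is shift-covariant, $\lambda(\omega,k+u)=\lambda(\theta_k\omega,u)$, and by uniqueness of the $Q$-compensator together with stationarity of $Q$ the increments of $\hat A$ are shift-covariant as well. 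Hence, using stationarity and the compensator identity once more, each block equals $\mathbb{E}^Q[\int_0^1 g\,dN_u+\int_0^1(1-e^{g})\lambda\,du]=A$, so $\int_\Omega F\,dQ=(n-m)A$ and therefore
\[
\frac1T\sup_{F\in\mathcal{C}_T}\int_\Omega F\,dQ\ \ge\ \frac{n-m}{n+1}\,\big(\min(H(Q),M)-\epsilon\big).
\]
Letting $T\to\infty$, then $\epsilon\downarrow0$, then $M\uparrow\infty$ gives $\liminf_{T\to\infty}\frac1T\sup_{F\in\mathcal{C}_T}\int F\,dQ\ge H(Q)$, which is the assertion; and since $\frac1T\int F\,dQ\le H(Q)$ for every $F\in\mathcal{C}_T$ by $f\hat\lambda+(1-e^{f})\lambda\le\lambda-\hat\lambda+\hat\lambda\log(\hat\lambda/\lambda)$ and stationarity, the limit in fact exists and equals $H(Q)$.

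I expect the main obstacle to be the approximation in the first paragraph: exhibiting a near-optimal test function that is simultaneously bounded, continuous, genuinely finite-memory (i.e.\ $\mathcal{F}^{s-m}_s$-measurable), and vanishing near the endpoints of $[0,1]$. This is where the non-compactness of $\Omega$ and the potentially long-range dependence of the $Q$-intensity $\hat\lambda$ have to be handled, via truncation of $\hat\lambda/\lambda$, a Stone--Weierstrass-type density argument on configurations restricted to bounded windows, and the finite first moment of $Q$; the remaining steps are bookkeeping with stationarity and the compensator identity.
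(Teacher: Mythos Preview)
Your approach is essentially the same as the paper's: approximate a near-optimal test function from the variational formula of Lemma~\ref{variational} by a finite-memory one, then use stationarity of $Q$ to propagate it across $[0,T]$. The paper's proof is extremely terse---it chooses $f_T\in\mathcal{B}(\mathcal{F}^{-(T-1)}_s)\cap C(\Omega\times\mathbb{R})$ converging to the near-optimizer $f_\epsilon$ and passes directly to $\liminf_{T\to\infty}\mathbb{E}^Q[\int_0^1 f_T\,dN_s-\int_0^1(e^{f_T}-1)\lambda\,ds]$ via Fatou, leaving the step $\frac1T\sup_{F\in\mathcal{C}_T}\int F\,dQ\ge \mathbb{E}^Q[\int_0^1 f_T\,dN_s-\ldots]$ implicit---whereas you make the tiling construction explicit (decoupling the memory depth $m$ from the horizon $T$, padding $[0,m]$ with zero, gluing shifted copies on $[k,k+1]$, and invoking shift-covariance of the compensator). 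Your version is the cleaner write-up of the same idea; you also correctly flag that the finite-memory approximation is the only genuinely nontrivial ingredient, which the paper dispatches in half a sentence.
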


\begin{proof}
Assume $H(Q)<\infty$. For any $\epsilon>0$, there exists some $f_{\epsilon}$ such that
\begin{equation}
\mathbb{E}^{Q}\left[\int_{0}^{1}f_{\epsilon}dN_{s}-\int_{0}^{1}(e^{f_{\epsilon}}-1)\lambda ds\right]\geq H(Q)-\epsilon.
\end{equation}
We can find a sequence $f_{T}\in\mathcal{B}\left(\mathcal{F}^{-(T-1)}_{s}\right)
\cap C(\Omega\times\mathbb{R})\rightarrow f_{\epsilon}$ as $T\rightarrow\infty$. By Fatou's lemma,
\begin{align}
&\liminf_{T\rightarrow\infty}\frac{1}{T}\sup_{F\in\mathcal{C}_{T}}\int_{\Omega}F(\omega)Q(d\omega)
\\
&\geq\liminf_{T\rightarrow\infty}\mathbb{E}^{Q}\left[\int_{0}^{1}f_{T}dN_{s}-\int_{0}^{1}(e^{f_{T}}-1)\lambda ds\right]\geq H(Q)-\epsilon.\nonumber
\end{align}
If $H(Q)=\infty$, then, for any $M>0$,
there exists some $f_{M}$ such that
\begin{equation}
\mathbb{E}^{Q}\left[\int_{0}^{1}f_{M}dN_{s}-\int_{0}^{1}(e^{f_{M}}-1)\lambda ds\right]\geq M.
\end{equation}
Repeat the same argument as in the case that $H(Q)<\infty$.
\end{proof}

\begin{lemma}\label{uppercompact}
For any compact set $A$, 
\begin{equation}
\limsup_{t\rightarrow\infty}\frac{1}{t}\log P(R_{t,\omega}\in A)\leq-\inf_{Q\in A}H(Q).
\end{equation}
\end{lemma}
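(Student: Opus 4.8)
The plan is to establish the upper bound for compact sets by the standard Varadhan-type change-of-measure argument, using the family $\mathcal{C}_T$ as a rich enough class of exponential test functions. The starting point is Lemma \ref{expectation}, which gives $\mathbb{E}^{P^{\emptyset}}[e^{\frac{1}{T}\int_0^t F(\theta_s\omega)ds}]\leq 1$ for every $F\in\mathcal{C}_T$. The issue is that $R_{t,\omega}$ is built from the periodized configuration $\omega_t$, not from $\omega$ itself, so I first replace $\int_0^t F(\theta_s\omega)\,ds$ by $\int_0^t F(\theta_s\omega_t)\,ds=t\int_\Omega F\,dR_{t,\omega}$; Lemma \ref{difference} says the two differ by a quantity that is superexponentially negligible, so Chebyshev plus H\"older gives, for each fixed $F\in\mathcal{C}_T$,
\begin{equation}
\limsup_{t\rightarrow\infty}\frac{1}{t}\log P^{\emptyset}\left(\int_\Omega F\,dR_{t,\omega}\geq a\right)\leq -\frac{a}{T}.
\end{equation}

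Next I would localize. Fix a compact set $A$ and let $m=\inf_{Q\in A}H(Q)$ (assume $m<\infty$, the case $m=\infty$ being similar). For each $Q\in A$, by Lemma \ref{finitetoinfinite} there exist $T=T_Q$ and $F=F_Q\in\mathcal{C}_{T_Q}$ with $\frac{1}{T_Q}\int_\Omega F_Q\,dQ$ as close to $H(Q)$ (hence $\geq m-\delta$) as desired. Since $Q'\mapsto\int_\Omega F_Q\,dQ'$ is continuous in the topology on $\mathcal{M}_S(\Omega)$ — here one must check that $F_Q$, being of the form $\int_0^{T_Q}f\,dN_s-\int_0^{T_Q}(e^f-1)\lambda\,ds$ with $f$ bounded continuous and $\lambda$ possibly unbounded, still defines an essentially continuous functional, which is exactly the kind of estimate already carried out in Lemma \ref{lscconvex} via the truncation $\lambda^M$ — there is an open neighborhood $U_Q$ of $Q$ on which $\frac{1}{T_Q}\int_\Omega F_Q\,dQ'\geq m-2\delta$. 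Cover $A$ by finitely many such $U_{Q_1},\dots,U_{Q_k}$, apply the displayed bound to each $F_{Q_i}$ with $a=t(m-2\delta)$, and take the union bound:
\begin{equation}
\limsup_{t\rightarrow\infty}\frac{1}{t}\log P^{\emptyset}(R_{t,\omega}\in A)\leq\limsup_{t\rightarrow\infty}\frac{1}{t}\log\sum_{i=1}^{k}P^{\emptyset}\left(\int_\Omega F_{Q_i}\,dR_{t,\omega}\geq t(m-2\delta)\right)\leq -(m-2\delta).
\end{equation}
Letting $\delta\downarrow 0$ gives the claim.

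The main obstacle I expect is the continuity of $Q'\mapsto\int_\Omega F_Q\,dQ'$ on the unusual topology of $\mathcal{M}_S(\Omega)$: because the rate $\lambda=\lambda(\sum_{\tau<s}h(s-\tau))$ depends on the whole past and is not a bounded continuous function of $\omega$, one cannot simply invoke weak convergence. The resolution is the same two-step truncation used in Lemma \ref{lscconvex}: approximate $h$ by $h^M=h\cdot\chi_{[0,M]}$ so that $\lambda^M$ becomes continuous and locally bounded, use $\mathbb{E}^{Q'}[N[0,1]]\to\mathbb{E}^{Q}[N[0,1]]$ (the extra ingredient in the topology) together with the Lipschitz bound $|\lambda-\lambda^M|\leq\alpha\sum_{\tau<s}h^M(s-\tau)^c\leq\alpha N[0,1]\int_M^\infty h$ to control the tail uniformly, and pass to the limit. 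A second, more routine point is handling the compensator term $\int_0^{T}(e^f-1)\lambda\,ds$ inside $F_Q$ under the periodized measure, but this has already been absorbed into Lemma \ref{difference}, so here it only reappears through the continuity estimate just described.
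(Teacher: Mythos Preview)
Your proposal is correct and follows essentially the same route as the paper: Chebyshev/H\"older together with Lemma~\ref{expectation} and Lemma~\ref{difference} to get a bound for each fixed $F\in\mathcal{C}_T$, then Lemma~\ref{finitetoinfinite} plus the continuity established in Lemma~\ref{lscconvex} to choose good $F_{Q_i}$'s on a finite open cover of the compact set, followed by a union bound. One slip: in the union-bound display your threshold should be $T_{Q_i}(m-2\delta)$, not $t(m-2\delta)$, since $\int_\Omega F_{Q_i}\,dR_{t,\omega}$ is already $t$-normalized; with that correction the first displayed inequality (with $a=T_{Q_i}(m-2\delta)$) gives exactly $-(m-2\delta)$.
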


\begin{proof}
Notice that
\begin{equation}
\mathbb{E}^{P^{\emptyset}}\left[e^{N[0,t]}\right]\leq\mathbb{E}^{P^{\emptyset}}\left[e^{(e^{2}-1)\int_{0}^{t}\lambda(s)ds}\right]^{1/2}
\leq\mathbb{E}^{P^{\emptyset}}\left[e^{(e^{2}-1)\epsilon h(0)N[0,t]+C_{\epsilon}(e^{2}-1)}\right]^{1/2}.
\end{equation}
By choosing $\epsilon>0$ small enough, we have $\mathbb{E}^{P^{\emptyset}}[e^{N[0,t]}]\leq e^{Ct}$ for some constant $C>0$. Therefore
\begin{equation}
\limsup_{\ell\rightarrow\infty}\limsup_{t\rightarrow\infty}\frac{1}{t}\log P^{\emptyset}\left(N[0,t]>\ell t\right)=-\infty,
\end{equation}
which implies (by comparing $\int_{\Omega}N[0,1]dR_{t,\omega}$ and $N[0,t]/t$ and the superexponential estimates in Lemma \ref{indfinite})
\begin{equation}
\limsup_{\ell\rightarrow\infty}\limsup_{t\rightarrow\infty}\frac{1}{t}\log P^{\emptyset}\left(\int_{\Omega}N[0,1]dR_{t,\omega}>\ell\right)=-\infty.
\end{equation}
Therefore, we need only to consider compact sets $A$ such that for any $Q\in A$, $\mathbb{E}^{Q}[N[0,1]]<\infty$.

Now for any $A$ compact consisting of $Q$ with $\mathbb{E}^{Q}[N[0,1]]<\infty$ and for any $F\in\mathcal{C}_{T}$ and 
for any $p,q>1$, $\frac{1}{p}+\frac{1}{q}=1$, by H\"{o}lder's inequality, Chebychev's inequality, and Lemma \ref{expectation},
\begin{align}
&P^{\emptyset}(R_{t,\omega}\in A)
\\
&\leq\mathbb{E}^{P^{\emptyset}}\left[e^{\frac{1}{pT}\int_{0}^{t}F(\theta_{s}\omega_{t})ds}\right]
\cdot\exp\left\{-\frac{t}{pT}\inf_{Q\in A}\int_{\Omega}F(\omega)Q(d\omega)\right\}\nonumber
\\
&\leq\mathbb{E}^{P^{\emptyset}}\left[e^{\frac{1}{T}\int_{0}^{t}F(\theta_{s}\omega)ds}\right]^{1/p}
\mathbb{E}^{P^{\emptyset}}\left[e^{\frac{q}{pT}\left|\int_{0}^{t}F(\theta_{s}\omega_{t})ds
-\int_{0}^{t}F(\theta_{s}\omega)ds\right|}\right]^{1/q}\nonumber
\\
&\phantom{\mathbb{E}^{P^{\emptyset}}\left[e^{\frac{1}{T}\int_{0}^{t}F(\theta_{s}\omega)ds}\right]^{1/p}\mathbb{E}^{P^{\emptyset}}}
\cdot\exp\left\{-\frac{t}{pT}\inf_{Q\in A}\int_{\Omega}F(\omega)Q(d\omega)\right\}\nonumber
\\
&\leq\mathbb{E}^{P^{\emptyset}}\left[e^{\frac{q}{pT}\left|\int_{0}^{t}F(\theta_{s}\omega_{t})ds
-\int_{0}^{t}F(\theta_{s}\omega)ds\right|}\right]^{1/q}\cdot\exp\left\{-\frac{t}{pT}\inf_{Q\in A}\int_{\Omega}F(\omega)Q(d\omega)\right\}\nonumber
\end{align}
By Lemma \ref{difference}, we have
\begin{equation}
\limsup_{t\rightarrow\infty}\frac{1}{t}\log P^{\emptyset}(R_{t,\omega}\in A)\leq-\frac{1}{p}\inf_{Q\in A}\frac{1}{T}\int_{\Omega}F(\omega)Q(d\omega).
\end{equation}
Since it holds for any $p>1$, we get
\begin{equation}
\limsup_{t\rightarrow\infty}\frac{1}{t}\log P^{\emptyset}(R_{t,\omega}\in A)\leq-\inf_{Q\in A}\frac{1}{T}\int_{\Omega}F(\omega)Q(d\omega).
\end{equation}

For any compact $A$, given $Q\in A$ and $\epsilon>0$, by Lemma \ref{finitetoinfinite}, there exists $T_{Q}>0$ and 
$F_{Q}\in\mathcal{C}_{T_{Q}}$ such that $\frac{1}{T_{Q}}\int_{\Omega}F_{Q}(\omega)Q(d\omega)\geq\inf_{A\in Q}H(Q)-\frac{1}{2}\epsilon$. 
Since the linear integral is a continuous functional of $Q$ (see the proof of Lemma \ref{lscconvex}), 
there exists a neighborhood $G_{Q}$ of $Q$ such that $\frac{1}{T_{Q}}\int_{\Omega}F_{Q}(\omega)Q(d\omega)\geq\inf_{A\in Q}H(Q)-\epsilon$ 
for all $Q\in G_{Q}$. Since $A$ is compact, there exists $G_{Q_{1}},\ldots,G_{Q_{\ell}}$ such that $A\subset\bigcup_{j=1}^{\ell}G_{Q_{j}}$. 
Hence
\begin{equation}
\inf_{1\leq j\leq\ell}\sup_{T>0}\sup_{F\in\mathcal{C}_{T}}\inf_{Q\in G_{j}}\frac{1}{T}\int_{\Omega}F(\omega)Q(d\omega)\geq\inf_{Q\in A}H(Q)-\epsilon.
\end{equation}
Note that for any $A,B$,
\begin{align}
&\limsup_{t\rightarrow\infty}\frac{1}{t}\log P(R_{t,\omega}\in A\cup B)
\\
&\leq\max\left\{\limsup_{t\rightarrow\infty}\frac{1}{t}\log 
P(R_{t,\omega}\in A),\limsup_{t\rightarrow\infty}\frac{1}{t}\log P(R_{t,\omega}\in B)\right\}.\nonumber
\end{align}
Thus, for $A\subset\bigcup_{j=1}^{\ell}G_{j}$,
\begin{equation}
\limsup_{t\rightarrow\infty}\frac{1}{t}\log P(R_{t,\omega}\in A)\leq-\inf_{1\leq j\leq\ell}
\sup_{T>0}\sup_{F\in\mathcal{C}_{T}}\inf_{Q\in G_{j}}\frac{1}{T}\int F(\omega)Q(d\omega).
\end{equation}
Hence $\limsup_{t\rightarrow\infty}\frac{1}{t}\log P(R_{t,\omega}\in A)\leq-\inf_{Q\in A}H(Q)$ for any compact $A$.
\end{proof}

\begin{theorem}[Upper Bound]
For any closed set $C$, 
\begin{equation}
\limsup_{t\rightarrow\infty}\frac{1}{t}\log P(R_{t,\omega}\in C)\leq-\inf_{Q\in C}H(Q).
\end{equation}
\end{theorem}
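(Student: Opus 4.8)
The strategy is to upgrade the compact-set estimate of Lemma~\ref{uppercompact} to closed sets by means of exponential tightness. The key ingredient (Lemma~\ref{tightness}) is that for every $L>0$ there is a compact set $K_{L}\subset\mathcal{M}_{S}(\Omega)$ with
\begin{equation}
\limsup_{t\to\infty}\frac{1}{t}\log P\left(R_{t,\omega}\notin K_{L}\right)\leq -L .
\end{equation}
Granting this, fix a closed set $C$ and $L>0$. Then $P(R_{t,\omega}\in C)\leq P(R_{t,\omega}\in C\cap K_{L})+P(R_{t,\omega}\notin K_{L})$, the set $C\cap K_{L}$ is compact, and, using Lemma~\ref{uppercompact} together with the elementary identity $\limsup_{t}\frac{1}{t}\log(a_{t}+b_{t})=\max\{\limsup_{t}\frac{1}{t}\log a_{t},\,\limsup_{t}\frac{1}{t}\log b_{t}\}$ for nonnegative $a_{t},b_{t}$,
\begin{equation}
\limsup_{t\to\infty}\frac{1}{t}\log P\left(R_{t,\omega}\in C\right)\leq\max\left\{-\inf_{Q\in C\cap K_{L}}H(Q),\;-L\right\}\leq\max\left\{-\inf_{Q\in C}H(Q),\;-L\right\} .
\end{equation}
Letting $L\uparrow\infty$ gives the theorem.

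It remains to produce the sets $K_{L}$. The only input is the superexponential control of point counts already obtained inside the proof of Lemma~\ref{uppercompact}: from $\mathbb{E}^{P^{\emptyset}}[e^{N[0,t]}]\leq e^{Ct}$ (via the martingale exponential inequality and the sublinearity $\lambda(z)/z\to 0$) one gets $\limsup_{\ell}\limsup_{t}\frac{1}{t}\log P^{\emptyset}(N[0,t]>\ell t)=-\infty$, and since by periodicity of $\omega_{t}$ and Fubini $\int_{\Omega}N[-m,m]\,dR_{t,\omega}\leq C_{m}N[0,t+m](\omega)/t$, also
\begin{equation}
\limsup_{\ell\to\infty}\limsup_{t\to\infty}\frac{1}{t}\log P\left(\int_{\Omega}N[-m,m]\,dR_{t,\omega}>\ell\right)=-\infty
\end{equation}
for each $m\geq 0$ (the case $m=0$, i.e.\ $N[0,1]$, is exactly what is verified at the start of the proof of Lemma~\ref{uppercompact}). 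One then takes $K_{L}$ to be a closed set carved out by constraints of the form $\int_{\Omega}N[-m,m]\,dR\leq\ell_{m}$, $m\geq 0$, with $\ell_{m}=\ell_{m}(L)$ chosen large and growing fast enough in $m$; such constraints force vague precompactness of the marginals on $\Omega$ and --- the $m=0$ constraint together with the higher ones supplying uniform integrability of $N[0,1]$ --- also control the extra first-moment coordinate, so that $K_{L}$ is compact in the strengthened topology of $\mathcal{M}_{S}(\Omega)$; this is the phenomenon flagged around (iii) of Lemma~\ref{finalestimate}. A union bound over $m$ (truncating at finitely many indices and absorbing the geometric tail), combined with the displayed superexponential estimates and the count bounds of Lemma~\ref{indfinite}, then gives $\limsup_{t}\frac{1}{t}\log P(R_{t,\omega}\notin K_{L})\leq -L$.

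The main obstacle is exactly this tightness step, and within it the delicate point is not the tail bound for point counts --- that is routine given the exponential moment estimate already available --- but verifying that $K_{L}$ is genuinely compact in the \emph{strengthened} weak topology rather than just in the ordinary weak topology of $\mathcal{M}(\Omega)$. One has to arrange simultaneously that the underlying point-process laws are vaguely precompact and that $\mathbb{E}^{Q}[N[0,1]]$ converges along convergent sequences, which is why the defining constraints must involve the whole family $\{N[-m,m]\}_{m\geq 0}$ (the higher moments being needed to force uniform integrability of $N[0,1]$) rather than a single bounded functional. Everything else in the argument is bookkeeping with H\"{o}lder's inequality.
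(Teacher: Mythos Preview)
Your overall strategy---upgrading Lemma~\ref{uppercompact} to closed sets via exponential tightness---is exactly the paper's, and the bookkeeping in your first paragraph is fine. The gap is in your construction of the compact sets $K_{L}$.

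First, for stationary $Q\in\mathcal{M}_{S}(\Omega)$ one has $\mathbb{E}^{Q}[N[-m,m]]=2m\,\mathbb{E}^{Q}[N[0,1]]$, so your family of constraints $\int N[-m,m]\,dQ\le\ell_{m}$ collapses to a single bound on $\mathbb{E}^{Q}[N[0,1]]$; these are \emph{not} ``higher moments'' and do not supply uniform integrability of $N[0,1]$, contrary to your claim. A first-moment bound alone cannot force $\mathbb{E}^{Q_{n}}[N[0,1]]\to\mathbb{E}^{Q}[N[0,1]]$ along weakly convergent sequences, which is precisely what compactness in the strengthened topology requires. The paper's remedy is to impose directly the uniform-integrability constraint $\int_{\{N[0,1]\ge\ell\}}N[0,1]\,dQ\le m(\ell)$ with $m(\ell)\to 0$, and to prove the matching superexponential estimate (iii) of Lemma~\ref{finalestimate}. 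That estimate is substantially more delicate than the bound $\mathbb{E}^{P^{\emptyset}}[e^{N_{t}}]\le e^{Ct}$ you invoke: it requires the decomposition $N=N_{\ell'}+\hat{N}_{\ell'}$ of Lemma~\ref{midestimate} and the sublinearity of $\lambda(\cdot)$ in an essential way.

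Second, you do not address simplicity at all. The space of simple point processes is not closed under weak convergence, so a weak limit of measures in your $K_{L}$ could fail to lie in $\mathcal{M}_{S}(\Omega)$ altogether. The paper handles this with a third family of constraints $Q(N[0,\delta]\ge 2)\le\delta g(\delta)$, $g(\delta)\to 0$, and the corresponding superexponential estimate (i) of Lemma~\ref{finalestimate}. Both this and the uniform-integrability ingredient are essential for the compactness argument of Lemma~\ref{tightness} to go through; your proposed $K_{L}$, as written, is not compact in the topology of $\mathcal{M}_{S}(\Omega)$.
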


\begin{proof}
For any closed set $C$ and compact $\mathcal{A}^{n}$ which is defined in Lemma \ref{tightness}, we have
\begin{align}
&\limsup_{t\rightarrow\infty}\frac{1}{t}\log P(R_{t,\omega}\in C)
\\
&\leq\max\left\{\limsup_{t\rightarrow\infty}\frac{1}{t}\log P(R_{t,\omega}\in C\cap\mathcal{A}^{n}),
\limsup_{t\rightarrow\infty}\frac{1}{t}\log P(R_{t,\omega}\in(\mathcal{A}^{n})^{c})\right\}.\nonumber
\end{align}
Since $C\cap\mathcal{A}_{n}$ is compact, by Lemma \ref{uppercompact},
\begin{equation}
\limsup_{t\rightarrow\infty}\frac{1}{t}\log P(R_{t,\omega}\in C\cap\mathcal{A}^{n})\leq-\inf_{Q\in C\cap\mathcal{A}^{n}}H(Q)\leq-\inf_{Q\in C}H(Q).
\end{equation}
Furthermore, by Lemma \ref{finalestimate}, we have
\begin{align}
&\limsup_{t\rightarrow\infty}\frac{1}{t}\log P(R_{t,\omega}\in(\mathcal{A}^{n})^{c})
\\
&=\limsup_{t\rightarrow\infty}\frac{1}{t}\log P\left(R_{t,\omega}\in\bigcup_{j=n}^{\infty}\mathcal{A}_{\frac{1}{j},j,j}^{c}\right)\nonumber
\\
&\leq\max_{j\geq n}\max\bigg\{\limsup_{t\rightarrow\infty}\frac{1}{t}
\log P\left(\frac{1}{t}\int_{0}^{t}\chi_{N[0,1]\geq j}(\theta_{s}\omega_{t})ds\geq\varepsilon(j)\right),\nonumber
\\
&\limsup_{t\rightarrow\infty}\frac{1}{t}\log P\left(\frac{1}{t}\int_{0}^{t}\chi_{N[0,1/j]\geq 2}(\theta_{s}\omega_{t})ds\geq (1/j)g(1/j)\right),\nonumber
\\
&\limsup_{t\rightarrow\infty}\frac{1}{t}\log 
P\left(\frac{1}{t}\int_{0}^{t}N[0,1]\chi_{N[0,1]\geq\ell}(\theta_{s}\omega_{t})ds\geq m(\ell)\right)\bigg\}\rightarrow-\infty\nonumber
\end{align}
as $n\rightarrow\infty$. Hence, we have
\begin{equation}
\limsup_{t\rightarrow\infty}\frac{1}{t}\log P(R_{t,\omega}\in C)\leq-\inf_{Q\in C}H(Q).
\end{equation}
\end{proof}

\section{Superexponential Estimates}\label{superexpestimates}

In order to get the full large deviation principle, we need the upper bound inequality valid for any closed set instead of for any compact set, 
which requires some superexponential estimates.

\begin{lemma}\label{reverse}
For any $q>0$,
\begin{equation}
\limsup_{t\rightarrow\infty}\frac{1}{t}\log\mathbb{E}^{P^{\emptyset}}\left[e^{q\int_{0}^{t}h(t-s)dN_{s}}\right]=0.
\end{equation}
\end{lemma}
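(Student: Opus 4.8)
The plan is to apply the Cauchy--Schwarz lemma stated just before Lemma~\ref{expectation} (the one bounding $\mathbb{E}[e^{\int_0^t f\,dN_s}]$ by $\mathbb{E}[e^{\int_0^t(e^{2f}-1)\lambda\,ds}]^{1/2}$) to the deterministic predictable integrand $f(\omega,s)=qh(t-s)$, then dominate the resulting compensator term using the sublinear growth $\lim_{z\to\infty}\lambda(z)/z=0$, and finally fall back on the exponential moment bound for $N[0,t]$ obtained inside the proof of Lemma~\ref{uppercompact}.

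Since $\int_0^t h(t-s)\,dN_s\ge 0$, the expectation is $\ge 1$, so only the upper bound is at issue. Applying that lemma with $f(\omega,s)=qh(t-s)$ --- which is bounded by $qh(0)$ and, being deterministic, trivially $\mathcal{F}^{-\infty}_s$-predictable --- gives
\[
\mathbb{E}^{P^\emptyset}\!\left[e^{q\int_0^t h(t-s)\,dN_s}\right]\le\mathbb{E}^{P^\emptyset}\!\left[e^{\int_0^t(e^{2qh(t-s)}-1)\lambda(\omega,s)\,ds}\right]^{1/2}.
\]
I would then use three elementary bounds: $e^{2qh(t-s)}-1\le 2qe^{2qh(0)}h(t-s)$; under $P^\emptyset$, $\lambda(\omega,s)=\lambda\!\big(\sum_{\tau\in\omega[0,s)}h(s-\tau)\big)\le\epsilon h(0)N[0,t]+C_\epsilon$ for $0\le s\le t$, where $\lambda(z)\le\epsilon z+C_\epsilon$ comes from $\lim_{z\to\infty}\lambda(z)/z=0$ and $h$ is decreasing; and, crucially, $\int_0^t h(t-s)\,ds=\int_0^t h(r)\,dr\le\|h\|_{L^1}$ uniformly in $t$. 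These give
\[
\int_0^t(e^{2qh(t-s)}-1)\lambda(\omega,s)\,ds\le 2qe^{2qh(0)}\|h\|_{L^1}\big(\epsilon h(0)N[0,t]+C_\epsilon\big),
\]
hence, with $\delta:=2qe^{2qh(0)}\|h\|_{L^1}h(0)\epsilon$,
\[
\mathbb{E}^{P^\emptyset}\!\left[e^{q\int_0^t h(t-s)\,dN_s}\right]\le e^{2qe^{2qh(0)}\|h\|_{L^1}C_\epsilon}\,\mathbb{E}^{P^\emptyset}\!\left[e^{\delta N[0,t]}\right]^{1/2}.
\]

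To close, recall from the proof of Lemma~\ref{uppercompact} that $\mathbb{E}^{P^\emptyset}[e^{N[0,t]}]\le e^{Ct}$ for some $C>0$; since $x\mapsto x^{\delta}$ is concave for $0<\delta\le 1$, Jensen's inequality gives $\mathbb{E}^{P^\emptyset}[e^{\delta N[0,t]}]\le(\mathbb{E}^{P^\emptyset}[e^{N[0,t]}])^{\delta}\le e^{C\delta t}$ as soon as $\delta\le 1$. Choosing $\epsilon$ small enough that $\delta\le 1$, the constant prefactor is irrelevant on the exponential scale, so $\limsup_{t\to\infty}\frac1t\log\mathbb{E}^{P^\emptyset}[e^{q\int_0^t h(t-s)\,dN_s}]\le\frac12 C\delta$; letting $\epsilon\downarrow 0$, hence $\delta\downarrow 0$, forces this limsup to be $\le 0$, which together with the trivial lower bound $\ge 0$ yields the claim.

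I do not expect a serious obstacle here --- this is essentially a mechanical consequence of the Cauchy--Schwarz lemma together with the sublinearity of $\lambda$. The only points requiring care are the admissibility of $f(\omega,s)=qh(t-s)$ as an integrand in that lemma and the legitimacy of the exponential-moment manipulations (finiteness being guaranteed by Lemma~\ref{uppercompact}), and, conceptually, the observation that the $C_\epsilon$ contribution to the compensator is multiplied by the bounded quantity $\int_0^t h(t-s)\,ds\le\|h\|_{L^1}$ and so remains $O(1)$ in $t$, while the leftover $N[0,t]$ term is weighted by a coefficient $\delta$ that can be driven to $0$ by shrinking $\epsilon$.
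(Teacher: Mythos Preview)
Your argument is correct and follows essentially the same route as the paper: apply the Cauchy--Schwarz martingale lemma with $f(\omega,s)=qh(t-s)$, bound $\lambda\le C_\epsilon+\epsilon h(0)N_t$ via sublinearity, use that $\int_0^t(e^{2qh(s)}-1)\,ds$ is bounded uniformly in $t$ (you make this explicit via $e^x-1\le e^{2qh(0)}x$, the paper just notes it is in $L^1$), and then let $\epsilon\downarrow 0$. The only caveat is that your appeal to the bound $\mathbb{E}^{P^\emptyset}[e^{N[0,t]}]\le e^{Ct}$ from the proof of Lemma~\ref{uppercompact} is not circular only because that particular two-line computation is self-contained and does not rely on Lemma~\ref{indfinite} or Lemma~\ref{reverse}; it would be cleaner to reproduce that short estimate here rather than cite forward.
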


\begin{proof}
\begin{align}
\mathbb{E}^{P^{\emptyset}}\left[e^{q\int_{0}^{t}h(t-s)dN_{s}}\right]
&\leq\mathbb{E}^{P^{\emptyset}}\left[e^{\int_{0}^{t}(e^{2qh(t-s)}-1)\lambda(\sum_{0<\tau<s}h(s-\tau))ds}\right]^{1/2}
\\
&\leq\mathbb{E}^{P^{\emptyset}}\left[e^{(C_{\epsilon}+h(0)\epsilon N_{t})\int_{0}^{t}(e^{2qh(t-s)}-1)ds}\right]^{1/2}.\nonumber
\end{align}
Note that $\int_{0}^{t}(e^{2qh(t-s)}-1)ds=\int_{0}^{t}(e^{2qh(s)}-1)ds\in L_{1}$ since $h\in L^{1}$. Therefore,
\begin{equation}
\limsup_{t\rightarrow\infty}\frac{1}{t}\log\mathbb{E}^{P^{\emptyset}}\left[e^{q\int_{0}^{t}h(t-s)dN_{s}}\right]\leq c(\epsilon),
\end{equation}
where $c(\epsilon)\rightarrow 0$ as $\epsilon\rightarrow 0$. Since it holds for any $\epsilon$, we get the desired result.
\end{proof}

\begin{lemma}\label{indfinite}
For any $q>0$ and $T>0$,
\begin{equation}
\limsup_{t\rightarrow\infty}\frac{1}{t}\log\mathbb{E}^{P^{\emptyset}}\left[e^{qN[t,t+T]}\right]=0.
\end{equation}
Therefore, for any $\epsilon>0$,
\begin{equation}
\limsup_{t\rightarrow\infty}\frac{1}{t}\log P^{\emptyset}\left(N[t,t+T]\geq\epsilon t\right)=-\infty.
\end{equation}
\end{lemma}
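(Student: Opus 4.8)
The plan is to estimate the exponential moment $\mathbb{E}^{P^{\emptyset}}[e^{qN[t,t+T]}]$ and then read off the tail bound by Chebyshev's inequality. Since $N[t,t+T]\ge 0$ the expectation is at least $1$, so $\liminf_{t\to\infty}\frac{1}{t}\log\mathbb{E}^{P^{\emptyset}}[e^{qN[t,t+T]}]\ge 0$ is automatic and only the matching upper bound has content.

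First I would apply the exponential estimate established earlier in this section, namely $\mathbb{E}[e^{\int_0^t f\,dN_s}]\le\mathbb{E}[e^{\int_0^t(e^{2f}-1)\lambda\,ds}]^{1/2}$, to the bounded predictable weight $f(\omega,s)=q\,\chi_{(t,t+T]}(s)$ over the horizon $[0,t+T]$. This gives
\[
\mathbb{E}^{P^{\emptyset}}\bigl[e^{qN[t,t+T]}\bigr]\le\mathbb{E}^{P^{\emptyset}}\Bigl[e^{(e^{2q}-1)\int_{t}^{t+T}\lambda(\omega,s)\,ds}\Bigr]^{1/2}.
\]
Using the sublinearity of $\lambda$ in the usual form $\lambda(z)\le C_{\eta}+\eta z$, together with $\int_t^{t+T}\sum_{0<\tau<u}h(u-\tau)\,du\le\Vert h\Vert_{L^1}N[0,t+T]$, I would bound $\int_{t}^{t+T}\lambda(\omega,s)\,ds\le TC_{\eta}+\eta\Vert h\Vert_{L^1}N[0,t+T]$, thereby reducing the problem to controlling $\mathbb{E}^{P^{\emptyset}}[e^{cN[0,t+T]}]$ with $c=(e^{2q}-1)\eta\Vert h\Vert_{L^1}$, a constant that can be made as small as we like by taking $\eta$ small.

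The heart of the matter is then the self-improving estimate: there are $c_0>0$ and a function $\gamma$ with $\gamma(c)\downarrow 0$ as $c\downarrow 0$ such that $\mathbb{E}^{P^{\emptyset}}[e^{cN[0,s]}]\le e^{\gamma(c)s}$ for all $c\in(0,c_0]$ and $s\ge 0$. I would prove this by a bootstrap: the same exponential estimate applied to $e^{cN[0,s]}$ together with $\int_0^s\lambda\,du\le C_{\eta}s+\eta\Vert h\Vert_{L^1}N[0,s]$ yields
\[
\mathbb{E}^{P^{\emptyset}}\bigl[e^{cN[0,s]}\bigr]\le e^{(e^{2c}-1)C_{\eta}s/2}\,\mathbb{E}^{P^{\emptyset}}\bigl[e^{(e^{2c}-1)\eta\Vert h\Vert_{L^1}N[0,s]}\bigr]^{1/2};
\]
choosing $\eta$ small, depending only on $\Vert h\Vert_{L^1}$, so that $(e^{2c}-1)\eta\Vert h\Vert_{L^1}\le c$ for all $c\le c_0$, the exponential moment on the right is $\le$ the one on the left, so $x:=\mathbb{E}^{P^{\emptyset}}[e^{cN[0,s]}]$ obeys $x\le e^{(e^{2c}-1)C_{\eta}s/2}x^{1/2}$, i.e. $x\le e^{(e^{2c}-1)C_{\eta}s}$, and we may take $\gamma(c)=(e^{2c}-1)C_{\eta}$. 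For this to be rigorous one must know beforehand that $x<\infty$, and supplying that is the one delicate point: I would localize at the stopping times $\sigma_n=\inf\{s:N[0,s]\ge n\}$, which increase to $\infty$ because the point process is locally finite, run the bootstrap with $N[0,s\wedge\sigma_n]\le n$ in place of $N[0,s]$ so that the stopped exponential moments are trivially finite, obtain the bound $e^{(e^{2c}-1)C_{\eta}s}$ uniformly in $n$, and let $n\to\infty$ by monotone convergence. Everything else is a direct repetition of the kind of estimates already carried out in Lemmas \ref{finitemean} and \ref{reverse}.

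Combining the two steps, $\mathbb{E}^{P^{\emptyset}}[e^{qN[t,t+T]}]\le e^{(e^{2q}-1)TC_{\eta}/2}\,e^{\gamma(c)(t+T)/2}$ with $c=(e^{2q}-1)\eta\Vert h\Vert_{L^1}$, whence $\limsup_{t\to\infty}\frac{1}{t}\log\mathbb{E}^{P^{\emptyset}}[e^{qN[t,t+T]}]\le\gamma(c)/2$; sending $\eta\downarrow 0$ forces $c\downarrow 0$ and hence $\gamma(c)\downarrow 0$, so the $\limsup$ is $\le 0$, and with the trivial lower bound this is the first assertion. For the second, Chebyshev's inequality gives, for every $q>0$, $\frac{1}{t}\log P^{\emptyset}(N[t,t+T]\ge\epsilon t)\le-q\epsilon+\frac{1}{t}\log\mathbb{E}^{P^{\emptyset}}[e^{qN[t,t+T]}]$; taking $\limsup_{t\to\infty}$ and using the first assertion gives $\le-q\epsilon$, and letting $q\to\infty$ yields $-\infty$. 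The main obstacle is the a priori finiteness of $\mathbb{E}^{P^{\emptyset}}[e^{cN[0,s]}]$ needed to close the bootstrap, handled by the stopping-time localization above; both the reduction and the final Chebyshev step are routine.
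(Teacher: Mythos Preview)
Your argument is correct, and the overall architecture---reduce via the exponential-martingale inequality to a bound on $\int_t^{t+T}\lambda\,ds$, then use sublinearity $\lambda(z)\le C_\eta+\eta z$, then close by a self-improving inequality---is the same as the paper's. Where you diverge is in the decomposition after the sublinearity step. You bound $\int_t^{t+T}\sum_{\tau<u}h(u-\tau)\,du\le\Vert h\Vert_{L^1}N[0,t+T]$ globally and then prove a separate small-parameter lemma $\mathbb{E}^{P^\emptyset}[e^{cN[0,s]}]\le e^{\gamma(c)s}$ with $\gamma(c)\downarrow 0$, via a bootstrap in $c$. The paper instead splits the sum at time $t$: the recent part $\sum_{t\le\tau<u}h(u-\tau)\le h(0)N[t,t+T]$ is absorbed back into the \emph{same} moment $\mathbb{E}[e^{qN[t,t+T]}]$ (choosing $\epsilon$ so that $2\epsilon(e^{2q}-1)h(0)<q$), while the past part $\sum_{\tau<t}h(u-\tau)\le\int_0^t h(t-s)\,dN_s$ is handled by Lemma~\ref{reverse}. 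Your route is more self-contained (it does not invoke Lemma~\ref{reverse} at all) and produces as a byproduct the useful quantitative statement that the exponential growth rate of $\mathbb{E}[e^{cN_t}]$ vanishes with $c$; the paper's route is shorter given that Lemma~\ref{reverse} is already available, and keeps the self-absorption on the \emph{local} count $N[t,t+T]$ rather than on the global $N[0,t+T]$. Both implicitly need the a~priori finiteness of the exponential moment before dividing through; you flag and handle this via stopping-time localization, which the paper glosses over.
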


\begin{proof}
By H\"{o}lder's inequality,
\begin{align}
\mathbb{E}^{P^{\emptyset}}\left[e^{qN[t,t+T]}\right]&\leq\mathbb{E}^{P^{\emptyset}}
\left[e^{(e^{2q}-1)\int_{t}^{t+T}\lambda(\sum_{0<\tau<s}h(s-\tau))ds}\right]^{1/2}
\\
&\leq e^{\frac{1}{2}(e^{2q}-1)C_{\epsilon}T}\cdot\mathbb{E}^{P^{\emptyset}}
\left[e^{\epsilon(e^{2q}-1)h(0)N[t,t+T]+\epsilon(e^{2q}-1)\int_{0}^{t}h(t-s)dN_{s}}\right]^{1/2}\nonumber
\\
&\leq e^{\frac{1}{2}(e^{2q}-1)C_{\epsilon}T}\cdot\mathbb{E}^{P^{\emptyset}}
\left[e^{2\epsilon(e^{2q}-1)h(0)N[t,t+T]}\right]^{1/4}\mathbb{E}^{P^{\emptyset}}\nonumber
\\
&\phantom{\leq e^{\frac{1}{2}(e^{2q}-1)C_{\epsilon}T}\cdot\mathbb{E}^{P^{\emptyset}}}
\cdot\left[e^{2\epsilon(e^{2q}-1)\int_{0}^{t}h(t-s)dN_{s}}\right]^{1/4}.\nonumber
\end{align}
Choose $\epsilon<q[2(e^{2q}-1)h(0)]^{-1}$. Then
\begin{equation}
\mathbb{E}^{P^{\emptyset}}\left[e^{qN[t,t+T]}\right]^{3/4}\leq e^{\frac{1}{2}(e^{2q}-1)C_{\epsilon}T}
\cdot\mathbb{E}^{P^{\emptyset}}\left[e^{2\epsilon(e^{2q}-1)\int_{0}^{t}h(t-s)dN_{s}}\right]^{1/4}.
\end{equation}
By Lemma \ref{reverse}, we get the desired result.
\end{proof}

\begin{lemma}\label{midestimate}
We have the following superexponential estimates.

(i) For any $\epsilon>0$,
\begin{equation}
\limsup_{\delta\rightarrow 0}\limsup_{t\rightarrow\infty}\frac{1}{t}\log P\left(\frac{1}{\delta t}
\int_{0}^{t}\chi_{N[0,\delta]\geq 2}(\theta_{s}\omega)ds\geq\epsilon\right)=-\infty.
\end{equation}

(ii) For any $\epsilon>0$,
\begin{equation}
\limsup_{M\rightarrow\infty}\limsup_{t\rightarrow\infty}\frac{1}{t}
\log P\left(\frac{1}{t}\int_{0}^{t}\chi_{N[0,1]\geq M}(\theta_{s}\omega)ds\geq\epsilon\right)=-\infty.
\end{equation}

(iii) For any $\epsilon>0$,
\begin{equation}
\limsup_{\ell\rightarrow\infty}\limsup_{t\rightarrow\infty}\frac{1}{t}
\log P\left(\frac{1}{t}\int_{0}^{t}N[0,1]\chi_{N[0,1]\geq\ell}(\theta_{s}\omega)ds\geq \epsilon\right)=-\infty.
\end{equation}
\end{lemma}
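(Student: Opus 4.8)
The plan is to prove all three estimates by a common scheme: bound each indicator by an exponential of the point count, apply the exponential Chebyshev inequality, and then control the resulting exponential moment under $P^\emptyset$ by combining the exponential moment bound $\mathbb{E}[e^{\int_0^t f\,dN_s}]\le\mathbb{E}[e^{\int_0^t(e^{2f}-1)\lambda\,ds}]^{1/2}$, the sublinearity estimate $\lambda(z)\le C_\eta+\eta z$ (valid for every $\eta>0$ since $\lambda(z)/z\to0$), Lemma \ref{reverse}, and Lemma \ref{indfinite}. The unifying geometric fact is that each integrand $\chi_{\cdots}(\theta_s\omega)$ is supported near pairs of points at small mutual distance, so the time-average is dominated by a \emph{close-pair count} $\Pi_\delta(t):=\int_0^{t+\delta}N[s-\delta,s)\,dN_s$, the number of ordered pairs of points within distance $\delta$, which is a genuine $\mathcal{F}^{-\infty}_s$-predictable integral against $dN_s$.

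For (i): since $\chi_{N[0,\delta]\ge2}(\theta_s\omega)=\chi_{N[s,s+\delta]\ge2}(\omega)$ is nonzero only when $[s,s+\delta]$ contains two points, and a pair at distance $d<\delta$ accounts for an $s$-interval of length $\delta-d<\delta$, one has $\int_0^t\chi_{N[s,s+\delta]\ge2}\,ds\le\delta\,\Pi_\delta(t)$, so the event in (i) is contained in $\{\Pi_\delta(t)\ge\epsilon t\}$. By Chebyshev, for any $\beta>0$,
\[
\frac1t\log P(\cdots)\le-\beta\epsilon+\frac1t\log\mathbb{E}^{P^\emptyset}\!\left[e^{\beta\Pi_\delta(t)}\right].
\]
The heart of the matter is to show that $\limsup_{t\to\infty}\frac1t\log\mathbb{E}^{P^\emptyset}[e^{\beta\Pi_\delta(t)}]\to0$ as $\delta\downarrow0$ for each fixed $\beta$; granting this, letting $\delta\downarrow0$ and then $\beta\uparrow\infty$ yields (i). To obtain this decay one applies the exponential moment bound with $f=\beta N[s-\delta,s)$, replaces $\lambda$ by $C_\eta+\eta\int_0^uh(u-v)\,dN_v$, and then uses that the support $\{u:N[u-\delta,u)\ge1\}$ is a union of intervals of length $\le\delta$, together with the monotonicity of $h$ and Fubini, to extract enough powers of $\delta$; what survives is a fixed power of an $N_t$-type exponential moment, which is $e^{o(t)}$ by Lemma \ref{reverse} and Lemma \ref{indfinite}.

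For (ii) and (iii): both reduce to (i) by pigeonhole. If $N[s,s+1]\ge M$, then two of the $M$ points in $[s,s+1]$ lie within distance $\frac1{M-1}$, so there is $s'\in[s,s+1]$ with $N[s',s'+\frac1{M-1}]\ge2$; hence $\{s:N[s,s+1]\ge M\}$ lies in the unit neighbourhood of $\{s':N[s',s'+\frac1{M-1}]\ge2\}$, whose Lebesgue measure is at most a constant multiple of $\Pi_{1/(M-1)}(t+1)$. Thus (ii) follows from the estimate on $\mathbb{E}^{P^\emptyset}[e^{\beta\Pi_\delta(t)}]$ above with $\delta=\frac1{M-1}\to0$. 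For (iii) one must additionally carry the weight $N[s,s+1]$ on the bad set: a close pair near $s'$ contributes, for each $s\in[s'-1,s']$, a weight at most $N[s'-1,s'+1]$, so the time-average is dominated by a weighted close-pair count of the form $\frac1t\sum_{\text{pairs within }1/(\ell-1)}N[\cdot-1,\cdot+1]$; this is handled by the same Chebyshev-plus-moment argument (discretizing or conditioning as needed to deal with the forward-looking factor $N[\cdot-1,\cdot+1]$), with the localization to gaps $<\frac1{\ell-1}$ forcing the rate to diverge as $\ell\to\infty$.

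The main obstacle is the quantitative claim in (i): that the exponential growth rate of $\mathbb{E}^{P^\emptyset}[e^{\beta\Pi_\delta(t)}]$ tends to $0$ as $\delta\downarrow0$. The subtlety is that, although $\Pi_\delta(t)$ is of order $t$ for typical configurations, it can be of order $N_t^2$ when many points pile into a short interval, so one cannot afford the crude bound $\lambda\le C_\eta+\eta h(0)N_t$; the convolution structure $\int_0^uh(u-v)\,dN_v$ must be retained and both the decay of $h$ and the shortness $\le\delta$ of the relevant windows must be exploited to produce enough factors of $\delta$ before invoking Lemma \ref{reverse} and Lemma \ref{indfinite}. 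Once this control is established, (ii) and (iii) are essentially bookkeeping.
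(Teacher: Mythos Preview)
Your central quantity $\Pi_\delta(t)=\int_0^{t+\delta}N[s-\delta,s)\,dN_s$ does not have a finite exponential moment, so the Chebyshev step in (i) cannot be carried out. If $k$ points fall in a single interval of length $\delta$, each point sees all the previous ones and $\Pi_\delta$ picks up $\binom{k}{2}$; thus $\Pi_\delta(t)\ge\binom{N[0,\delta]}{2}$. Since $\lambda_s\ge\lambda(0)>0$, the Hawkes process with empty history stochastically dominates a Poisson process of rate $\lambda(0)$, and for a Poisson variable $X$ one has $\mathbb{E}[e^{\beta\binom{X}{2}}]=\sum_k e^{\beta k(k-1)/2}e^{-c\delta}(c\delta)^k/k!=\infty$ for every $\beta>0$, because $e^{\beta k^2/2}$ beats $k!$. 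Hence $\mathbb{E}^{P^\emptyset}[e^{\beta\Pi_\delta(t)}]=\infty$ identically, and the claim that its exponential growth rate tends to $0$ as $\delta\downarrow 0$ is vacuous. The difficulty you flagged (that $\Pi_\delta$ can be of order $N_t^2$) is exactly fatal here; no amount of ``extracting powers of $\delta$'' after applying the martingale bound with $f=\beta N[s-\delta,s)$ will help, because the integrand $e^{2\beta N[s-\delta,s)}-1$ is already non-integrable in expectation. A variant using the bounded integrand $\chi_{N[s-\delta,s)\ge 1}$ in place of $N[s-\delta,s)$ could in principle be salvaged, but that is not what you wrote, and it would still require nontrivial work to push the resulting $\int_0^t\chi_{N[s-\delta,s)\ge 1}\lambda_s\,ds$ down to $o(t)$.

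The paper avoids this trap by a completely different decomposition: it splits the point process by the \emph{level of the intensity}, writing $N=N_{\ell'}+\hat N_{\ell'}$ with $N_{\ell'}[0,t]=\int_0^t\chi_{\lambda(s)<\ell'}\,dN_s$ and $\hat N_{\ell'}[0,t]=\int_0^t\chi_{\lambda(s)\ge\ell'}\,dN_s$. The low-intensity piece $N_{\ell'}$ is stochastically dominated by a Poisson process of rate $\ell'$, and for a genuine Poisson process the indicator $\chi_{N[0,\delta]\ge 2}$ (which is bounded by $1$, unlike your close-pair count) has an explicitly computable block-product exponential moment; this is Lemma~\ref{constrate}. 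The high-intensity piece contributes only through $\hat N_{\ell'}[0,t]$ itself, which is shown to be $o(t)$ with superexponentially small error by exploiting $\lambda(z)/z\to 0$ and choosing $\ell'\to\infty$ after $\delta\to 0$. Parts (ii) and (iii) are handled by the same intensity-splitting, not by a pigeonhole reduction to (i); in particular (iii) requires a separate Poisson computation (Lemma~\ref{constrateIII}) because of the extra weight $N[0,1]$. Your pigeonhole reduction of (ii) and (iii) to close-pair estimates inherits the same blow-up, and the sketch for (iii) with the forward-looking weight $N[\cdot-1,\cdot+1]$ is too vague to assess.
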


\begin{proof}
(i) Let us define
\begin{equation}
N_{\ell'}[0,t]=\int_{0}^{t}\chi_{\lambda(s)<\ell'}dN_{s},\quad\hat{N}_{\ell'}[0,t]=\int_{0}^{t}\chi_{\lambda(s)\geq\ell'}dN_{s}.
\end{equation}
Then $N[0,t]=N_{\ell'}[0,t]+\hat{N}_{\ell'}[0,t]$ and $N_{\ell'}[0,t]$ has compensator 
$\int_{0}^{t}\lambda(s)\chi_{\lambda(s)<\ell'}ds$ and $\hat{N}_{\ell'}[0,t]$ has 
compensator $\int_{0}^{t}\lambda(s)\chi_{\lambda(s)\geq\ell'}ds$. Notice that
\begin{equation}
\chi_{N[0,\delta]\geq 2}\leq\chi_{N_{\ell'}[0,\delta]\geq 2}+\chi_{\hat{N}_{\ell'}[0,\delta]\geq 1}.
\end{equation}
It is clear that $N_{\ell'}$ is dominated by the usual Poisson process with rate $\ell'$. By Lemma \ref{constrate},
\begin{equation}
\limsup_{\delta\rightarrow 0}\limsup_{t\rightarrow\infty}\frac{1}{t}
\log P\left(\frac{1}{\delta t}\int_{0}^{t}\chi_{N_{\ell'}[0,\delta]\geq 2}(\theta_{s}\omega)ds\geq\frac{\epsilon}{2}\right)=-\infty.
\end{equation}
On the other hand,
\begin{align}
\frac{1}{\delta}\int_{0}^{t}\chi_{\hat{N}_{\ell'}[0,\delta]\geq 1}(\theta_{s}\omega)ds
&=\frac{1}{\delta}\int_{0}^{t}\chi_{\hat{N}_{\ell'}[s,s+\delta]\geq 1}(\omega)ds
\\
&\leq\frac{1}{\delta}\int_{0}^{t}\hat{N}_{\ell'}[s,s+\delta]ds\nonumber
\\
&=\frac{1}{\delta}\int_{\delta}^{t+\delta}\hat{N}_{\ell'}[0,s]ds-\frac{1}{\delta}\int_{0}^{t}\hat{N}_{\ell'}[0,s]ds\nonumber
\\
&\leq\hat{N}_{\ell'}[0,t]+N[t,t+\delta].\nonumber
\end{align}
By Lemma \ref{indfinite}, we have
\begin{equation}
\limsup_{t\rightarrow\infty}\frac{1}{t}\log P\left(\frac{1}{t}N[t,t+\delta]\geq\frac{\epsilon}{4}\right)=-\infty,
\end{equation}
for any $\delta>0$. Hence
\begin{equation}
\limsup_{\delta\rightarrow 0}\limsup_{t\rightarrow\infty}\frac{1}{t}\log P\left(\frac{1}{t}N[t,t+\delta]\geq\frac{\epsilon}{4}\right)=-\infty.
\end{equation}
Finally, for some positive $h(\ell')$ to be chosen later, 
\begin{align}
P\left(\frac{1}{t}\hat{N}_{\ell'}[0,t]\geq\frac{\epsilon}{4}\right)&\leq\mathbb{E}\left[e^{h(\ell')\hat{N}_{\ell'}[0,t]}\right]e^{-t h(\ell')\epsilon/4}
\\
&\leq\mathbb{E}\left[e^{(e^{2h(\ell')}-1)\int_{0}^{t}\lambda(s)\chi_{\lambda(s)\geq\ell'}ds}\right]^{1/2}e^{-t h(\ell')\epsilon/4}.\nonumber
\end{align}
Let $f(z)=\frac{z}{\lambda(z)}$. Then $f(z)\rightarrow\infty$ as $z\rightarrow\infty$. Let $Z_{s}=\sum_{\tau\in\omega[0,s]}h(s-\tau)$. 
Then, by the definition of $\lambda(s)$ and abusing the notation a little bit, we see that $\lambda(s)=\lambda(Z_{s})$. 
Since $\lambda(\cdot)$ is increasing, its inverse function $\lambda^{-1}$ exists and $\lambda^{-1}(\ell')\rightarrow\infty$ 
as $\ell'\rightarrow\infty$. We have
\begin{align}
\mathbb{E}\left[e^{(e^{2h(\ell')}-1)\int_{0}^{t}\lambda(s)\chi_{\lambda(s)\geq\ell'}ds}\right]^{1/2}
&\leq\mathbb{E}\left[e^{(e^{2h(\ell')}-1)\int_{0}^{t}\lambda(Z_{s})\chi_{Z_{s}\geq\lambda^{-1}(\ell')}ds}\right]^{1/2}
\\
&\leq\mathbb{E}\left[e^{(e^{2h(\ell')}-1)\int_{0}^{t}\lambda(Z_{s})\frac{f(Z_{s})}{\inf_{z\geq\ell'}f(\lambda^{-1}(z))}ds}\right]^{1/2}.
\end{align}
It is clear that $\lim_{\ell'\rightarrow\infty}\inf_{z\geq\ell'}f(\lambda^{-1}(z))=\infty$. Choose
\begin{equation}
h(\ell')=\frac{1}{2}\log\left[\inf_{z\geq\ell'}f(\lambda^{-1}(z))+1\right].
\end{equation}
Then $h(\ell')\rightarrow\infty$ as $\ell'\rightarrow\infty$ and
\begin{align}
\mathbb{E}\left[e^{(e^{2h(\ell')}-1)\int_{0}^{t}\lambda(Z_{s})\frac{f(Z_{s})}{\inf_{z\geq\ell'}f(\lambda^{-1}(z))}ds}\right]^{1/2}
&=\mathbb{E}\left[e^{\int_{0}^{t}Z_{s}ds}\right]^{1/2}
\\
&=\mathbb{E}\left[e^{\int_{0}^{t}\sum_{\tau\in\omega[0,s]}h(s-\tau)ds}\right]^{1/2}\nonumber
\\
&\leq\mathbb{E}\left[e^{\Vert h\Vert_{L^{1}}N_{t}}\right]^{1/2}.\nonumber
\end{align}
Hence,
\begin{equation}
\limsup_{\ell'\rightarrow\infty}\limsup_{\delta\rightarrow 0}\limsup_{t\rightarrow\infty}\frac{1}{t}
\log P\left(\frac{1}{t}\hat{N}_{\ell'}[0,t]\geq\frac{\epsilon}{4}\right)=-\infty.
\end{equation}

(ii) It is easy to see that (iii) implies (ii).

(iii) Observe first that
\begin{align}
N[s,s+1]\chi_{N[s,s+1]\geq\ell}&\leq N_{\ell'}[s,s+1]\chi_{N_{\ell'}[s,s+1]\geq\frac{\ell}{2}}
+\hat{N}_{\ell'}[s,s+1]\chi_{\hat{N}_{\ell'}[s,s+1]\geq\frac{\ell}{2}}
\\
&+\frac{\ell}{2}\chi_{N_{\ell'}[s,s+1]\geq\frac{\ell}{2}}+\frac{\ell}{2}\chi_{\hat{N}_{\ell'}[s,s+1]\geq\frac{\ell}{2}}.\nonumber
\end{align}
For the first term, notice that $N_{\ell'}$ is dominated by a usual Poisson process with rate $\ell'$. 
Thus, by Lemma \ref{constrateIII},
\begin{equation}
\limsup_{\ell\rightarrow\infty}\limsup_{t\rightarrow\infty}\frac{1}{t}
\log P\left(\frac{1}{t}\int_{0}^{t}N_{\ell'}[s,s+1]\chi_{N_{\ell'}[s,s+1]\geq\frac{\ell}{2}}(\omega)ds\geq\frac{\epsilon}{4}\right)=-\infty.
\end{equation}
For the second term, $\hat{N}_{\ell'}[s,s+1]\chi_{\hat{N}_{\ell'}[s,s+1]\geq\frac{\ell}{2}}\leq\hat{N}_{\ell'}[s,s+1]$ and
\begin{equation}
\int_{0}^{t}\hat{N}_{\ell'}[s,s+1]ds\leq\hat{N}_{\ell'}[0,t]+N[t,t+1].
\end{equation}
By Lemma \ref{indfinite},
\begin{equation}
\limsup_{t\rightarrow\infty}\frac{1}{t}\log P\left(\frac{1}{t}N[t,t+1]\geq\frac{\epsilon}{8}\right)=-\infty,
\end{equation}
and by the same argument as in (i),
\begin{equation}
\limsup_{\ell'\rightarrow\infty}\limsup_{\ell\rightarrow\infty}\limsup_{t\rightarrow\infty}\frac{1}{t}
\log P\left(\frac{1}{t}\hat{N}_{\ell'}[0,t]\geq\frac{\epsilon}{8}\right)=-\infty.
\end{equation}
For the third term, notice that
\begin{equation}
\int_{0}^{t}\frac{\ell}{2}\chi_{N_{\ell'}[s,s+1]\geq\frac{\ell}{2}}ds\leq\int_{0}^{t}N_{\ell'}[s,s+1]
\chi_{N_{\ell'}[s,s+1]\geq\frac{\ell}{2}}(\omega)ds.
\end{equation}
We can get the same superexponential estimate as before. Finally, for the fourth term,
\begin{equation}
\int_{0}^{t}\frac{\ell}{2}\chi_{\hat{N}_{\ell'}[s,s+1]\geq\frac{\ell}{2}}ds\leq\int_{0}^{t}\hat{N}_{\ell'}[s,s+1](\omega)ds.
\end{equation}
We can get the same superexponential estimate as before.
\end{proof}

\begin{lemma}\label{constrate}
Assume $N_{t}$ is a usual Poisson process with constant rate $\lambda$. Then, for any $\epsilon>0$
\begin{equation}
\limsup_{\delta\rightarrow 0}\limsup_{t\rightarrow\infty}\frac{1}{t}\log\mathbb{P}\left(\frac{1}{\delta t}
\int_{0}^{t}\chi_{N[s,s+\delta]\geq 2}(\omega)ds\geq\epsilon\right)=-\infty.
\end{equation}
\end{lemma}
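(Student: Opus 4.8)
The plan is to discretize time, dominate the time average by a sum of almost-independent Bernoulli indicators whose success probability is $O(\delta^{2})$, and then run an exponential Chebyshev bound with a free parameter that can be sent to infinity after $\delta$ is sent to $0$.

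First I would partition $[0,t]$ into the half-open intervals $I_{k}=[k\delta,(k+1)\delta)$. If $s\in I_{k}$ and $N[s,s+\delta]\geq 2$, then $[s,s+\delta]\subset[k\delta,(k+2)\delta)$, so $N[k\delta,(k+2)\delta)\geq 2$; setting $Y_{k}=\chi_{N[k\delta,(k+2)\delta)\geq 2}$ this gives the pointwise bound $\chi_{N[s,s+\delta]\geq 2}\leq Y_{k}$ on $I_{k}$, hence
\begin{equation}
\frac{1}{\delta t}\int_{0}^{t}\chi_{N[s,s+\delta]\geq 2}(\omega)\,ds\leq\frac{1}{t}\sum_{k}Y_{k},
\end{equation}
the sum running over the $O(t/\delta)$ indices with $I_{k}\cap[0,t]\neq\emptyset$ (the two boundary blocks near $0$ and $t$ contribute only $O(1)$ extra terms and are irrelevant to the $\frac{1}{t}\log$ asymptotics). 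Since the windows $[k\delta,(k+2)\delta)$ overlap only for consecutive indices, $\{Y_{k}\}_{k\text{ even}}$ is an i.i.d.\ family and $\{Y_{k}\}_{k\text{ odd}}$ is another, each $Y_{k}$ being Bernoulli with parameter $q_{\delta}:=\mathbb{P}(N[0,2\delta]\geq 2)$; because $N[0,2\delta]$ is Poisson with mean $2\lambda\delta$ and $1-e^{-\mu}-\mu e^{-\mu}\leq\mu^{2}/2$ for $\mu\geq 0$, one has $q_{\delta}\leq 2\lambda^{2}\delta^{2}$.

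Then I would split $\sum_{k}Y_{k}$ into its even and odd parts and use $\mathbb{P}(A\cup B)\leq\mathbb{P}(A)+\mathbb{P}(B)$, so it suffices to bound $\mathbb{P}(\sum_{k\text{ even}}Y_{k}\geq\epsilon t/2)$. For any $\theta>0$, Chebyshev's inequality together with independence gives
\begin{equation}
\mathbb{P}\Big(\sum_{k\text{ even}}Y_{k}\geq\tfrac{\epsilon t}{2}\Big)\leq e^{-\theta\epsilon t/2}\big(1+q_{\delta}(e^{\theta}-1)\big)^{n_{t}}\leq\exp\Big(-\tfrac{\theta\epsilon t}{2}+n_{t}q_{\delta}(e^{\theta}-1)\Big),
\end{equation}
where $n_{t}\leq t/(2\delta)+1$ is the number of even blocks, so that $n_{t}q_{\delta}\leq\lambda^{2}\delta t+o(t)$; the odd part is identical. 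Letting $t\to\infty$ then yields
\begin{equation}
\limsup_{t\to\infty}\frac{1}{t}\log\mathbb{P}\Big(\frac{1}{\delta t}\int_{0}^{t}\chi_{N[s,s+\delta]\geq 2}(\omega)\,ds\geq\epsilon\Big)\leq-\frac{\theta\epsilon}{2}+\lambda^{2}\delta(e^{\theta}-1).
\end{equation}
Sending $\delta\downarrow 0$ kills the last term and leaves $-\theta\epsilon/2$, and since $\theta>0$ is arbitrary the iterated limit superior equals $-\infty$, which is the assertion.

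I do not expect any real obstacle here: the heart of the matter is the quadratic estimate $q_{\delta}=O(\delta^{2})$ for a Poisson process, which forces $n_{t}q_{\delta}$ to be of order $\delta t$ rather than $t$, so that the exponential rate can be driven to $-\infty$ by first taking $\delta\downarrow 0$ and then $\theta\uparrow\infty$. The only care needed is the routine bookkeeping of the boundary blocks and organizing the even/odd decomposition so that each sub-family is genuinely i.i.d.
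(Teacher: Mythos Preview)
Your proof is correct and rests on the same core fact as the paper's: for a Poisson process, $\mathbb{P}(N[0,c\delta]\geq 2)=O(\delta^{2})$, so an exponential Chebyshev bound over $O(t/\delta)$ blocks produces a positive term of order $\delta t$ rather than $t$. The technical packaging differs, however. The paper does not discretize and split into even/odd families; instead it writes $\int_{0}^{t}f(\theta_{s}\omega)\,ds=\int_{0}^{\delta}\sum_{j}f(\theta_{s+j\delta}\omega)\,ds$ and applies Jensen's inequality to the outer average, reducing directly to $\mathbb{E}[e^{f(\delta,\omega)}]^{[t/\delta]+1}$ with genuinely i.i.d.\ factors of length $\delta$. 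It also ties the Chernoff parameter to $\delta$ by setting $\theta=1/h(\delta)$ with $h(\delta)=1/\log(1/\delta)$, so that $e^{\theta}q_{\delta}=O(\delta)$ and the bound $M-\epsilon/h(\delta)\to-\infty$ in a single $\delta\downarrow 0$ limit. Your version, by contrast, keeps $\theta$ free, sends $\delta\downarrow 0$ first to kill $\lambda^{2}\delta(e^{\theta}-1)$, and only then lets $\theta\uparrow\infty$; this is a bit cleaner conceptually and avoids having to guess the right growth rate for the parameter. Either way works; the Jensen trick in the paper is worth knowing since it handles the overlap without any even/odd bookkeeping.
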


\begin{proof}
Let $f(\delta,\omega)=\frac{1}{h(\delta)}\chi_{N[0,\delta]\geq 2}(\omega)$, where $h(\delta)$ is to be chosen later. 
By Jensen's inequality and stationarity and independence of increments of Poisson process, we have
\begin{align}
\mathbb{E}\left[e^{\int_{0}^{t}\frac{1}{\delta}f(\delta,\theta_{s}\omega)ds}\right]
&\leq\mathbb{E}\left[e^{\frac{1}{\delta}\int_{0}^{\delta}\sum_{j=0}^{[t/\delta]}f(\delta,\theta_{s+j\delta}\omega)ds}\right]
\\
&\leq\mathbb{E}\left[\frac{1}{\delta}\int_{0}^{\delta}e^{\sum_{j=0}^{[t/\delta]}f(\delta,\theta_{s+j\delta}\omega)}ds\right]\nonumber
\\
&=\mathbb{E}\left[e^{\sum_{j=0}^{[t/\delta]}f(\delta,\theta_{j\delta}\omega)}\right]\nonumber
\\
&=\mathbb{E}\left[e^{f(\delta,\omega)}\right]^{[t/\delta]+1}\nonumber
\\
&=\left\{e^{1/h(\delta)}(1-e^{-\lambda\delta}-\lambda\delta e^{-\lambda\delta})+e^{-\lambda\delta}
+\lambda\delta e^{-\lambda\delta}\right\}^{[t/\delta]+1}\nonumber
\\
&\leq (M'e^{1/h(\delta)}\lambda^{2}\delta^{2}+1)^{[t/\delta]+1},\nonumber
\end{align}
for some $M'>0$. Choose $h(\delta)=\frac{1}{\log(1/\delta)}$. Thus,
\begin{equation}
\mathbb{E}\left[e^{\int_{0}^{t}\frac{1}{\delta}f(\delta,\theta_{s}\omega)ds}\right]\leq(M'\delta+1)^{[t/\delta]+1}\leq e^{Mt},
\end{equation}
for some $M>0$. Therefore, by Chebychev's inequality,
\begin{equation}
\limsup_{t\rightarrow\infty}\frac{1}{t}\log\mathbb{P}\left(\frac{1}{\delta h(\delta)t}\int_{0}^{t}
\chi_{N[s,s+\delta]\geq 2}(\omega)ds\geq\frac{\epsilon}{h(\delta)}\right)
\leq M-\frac{\epsilon}{h(\delta)},
\end{equation}
which holds for any $\delta>0$. Letting $\delta\rightarrow 0$, we get the desired result.
\end{proof}

\begin{lemma}\label{constrateIII}
Assume $N_{t}$ is a usual Poisson process with constant rate $\lambda$. Then, for any $\epsilon>0$,
\begin{equation}
\limsup_{\ell\rightarrow\infty}\limsup_{t\rightarrow\infty}\frac{1}{t}
\log P\left(\frac{1}{t}\int_{0}^{t}N[0,1]\chi_{N[0,1]\geq\ell}(\theta_{s}\omega)ds\geq \epsilon\right)=-\infty.
\end{equation}
\end{lemma}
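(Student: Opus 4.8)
The plan is to prove this by an exponential Chebyshev (Markov) bound, in the same spirit as the proof of Lemma~\ref{constrate}, the new feature being that the relevant unit windows are genuinely \emph{independent} because $N$ is Poisson. Write $g_{\ell}(\omega)=N[0,1](\omega)\chi_{N[0,1](\omega)\geq\ell}$, so that $g_{\ell}(\theta_{s}\omega)=N[s,s+1](\omega)\chi_{N[s,s+1](\omega)\geq\ell}$, and fix $\gamma>0$. The goal is to control $\mathbb{E}\!\left[\exp\{\gamma\int_{0}^{t}g_{\ell}(\theta_{s}\omega)\,ds\}\right]$ and then apply Chebyshev's inequality.

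First I would discretize the time integral. Since $g_{\ell}\geq 0$, extending the last, partial unit window to a full one gives $\int_{0}^{t}g_{\ell}(\theta_{u}\omega)\,du\leq\int_{0}^{1}\sum_{j=0}^{[t]}g_{\ell}(\theta_{s+j}\omega)\,ds$, and Jensen's inequality with respect to the uniform law on $[0,1]$ then yields
\[
\mathbb{E}\!\left[e^{\gamma\int_{0}^{t}g_{\ell}(\theta_{u}\omega)\,du}\right]\leq\int_{0}^{1}\mathbb{E}\!\left[e^{\gamma\sum_{j=0}^{[t]}g_{\ell}(\theta_{s+j}\omega)}\right]ds.
\]
For each fixed $s$ the intervals $[s+j,s+j+1)$, $j=0,\dots,[t]$, are pairwise disjoint, so by the independent-increments property the variables $N[s+j,s+j+1]$ are i.i.d.\ $\mathrm{Poisson}(\lambda)$; hence $\mathbb{E}[e^{\gamma\sum_{j}g_{\ell}(\theta_{s+j}\omega)}]=(\mathbb{E}[e^{\gamma X\chi_{X\geq\ell}}])^{[t]+1}$ with $X\sim\mathrm{Poisson}(\lambda)$, uniformly in $s$.

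The only computation left is the single-window moment $\mathbb{E}[e^{\gamma X\chi_{X\geq\ell}}]=1+S_{\ell}(\gamma)$, where $S_{\ell}(\gamma):=\sum_{k\geq\ell}(e^{\gamma k}-1)e^{-\lambda}\lambda^{k}/k!$. Because $\sum_{k\geq 0}(e^{\gamma k}-1)e^{-\lambda}\lambda^{k}/k!=e^{\lambda(e^{\gamma}-1)}-1<\infty$, the series $S_{\ell}(\gamma)$ is finite for \emph{every} $\gamma$ and, for fixed $\gamma$, $S_{\ell}(\gamma)\to 0$ as $\ell\to\infty$. Putting these together with $\log(1+x)\leq x$ and Chebyshev's inequality,
\[
\frac{1}{t}\log P\!\left(\frac{1}{t}\int_{0}^{t}g_{\ell}(\theta_{s}\omega)\,ds\geq\epsilon\right)\leq-\gamma\epsilon+\frac{[t]+1}{t}\,S_{\ell}(\gamma),
\]
so taking $\limsup_{t\to\infty}$ and then $\limsup_{\ell\to\infty}$ gives the bound $-\gamma\epsilon$ for every $\gamma>0$; since the left-hand side does not depend on $\gamma$, letting $\gamma\to\infty$ produces $-\infty$, which is the assertion. (Alternatively, as in Lemma~\ref{constrate}, one may let $\gamma=\gamma(\ell)\to\infty$ slowly enough that $S_{\ell}(\gamma(\ell))\to0$.)

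I do not expect a real obstacle here. The only point needing care is that the windows $[s,s+1]$ overlap, which is handled by the shift-by-integer/Jensen device; and the reason the final $\gamma\to\infty$ step is legitimate is precisely that the Poisson distribution has all exponential moments finite, so $\mathbb{E}[e^{\gamma X\chi_{X\geq\ell}}]<\infty$ for all $\gamma$. This is the feature that distinguishes the present estimate from part~(i) of Lemma~\ref{midestimate}, where one instead has to exploit the $O(\delta^{2})$ smallness of $P(N[0,\delta]\geq2)$ and let the exponential weight depend on $\delta$.
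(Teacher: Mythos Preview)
Your proof is correct and follows essentially the same approach as the paper: discretize via the shift-by-integer/Jensen device, use independence of Poisson increments to factor the exponential moment as $\bigl(\mathbb{E}[e^{\gamma X\chi_{X\geq\ell}}]\bigr)^{[t]+1}$, and apply Chebyshev. The only cosmetic difference is that the paper takes $\gamma=h(\ell)=(\log\ell)^{1/2}$ from the outset (using a Stirling-type bound $k!\geq e^{k\log k - Ck}$ to control the tail sum explicitly), whereas you keep $\gamma$ fixed, observe that $S_{\ell}(\gamma)\to 0$ as the tail of a convergent series, and send $\gamma\to\infty$ at the end; you even note the paper's variant as an alternative.
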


\begin{proof}
Let $h(\ell)$ be some function of $\ell$ to be chosen later. Following the same argument as in the proof of Lemma \ref{constrate}, we have
\begin{align}
&\mathbb{P}\left(h(\ell)\int_{0}^{t}N[0,1]\chi_{N[0,1]\geq\ell}(\theta_{s}\omega)ds\geq\epsilon h(\ell)t\right)
\\
&\leq\mathbb{E}\left[e^{h(\ell)\int_{0}^{t}N[0,1]\chi_{N[0,1]\geq\ell}(\theta_{s}\omega)ds}\right]e^{-\epsilon h(\ell)t}\nonumber
\\
&\leq\mathbb{E}\left[e^{h(\ell)N[0,1]\chi_{N[0,1]\geq\ell}}\right]^{[t]+1}e^{-\epsilon h(\ell)t}\nonumber
\\
&=\left\{\mathbb{P}(N[0,1]<\ell)+\sum_{k=\ell}^{\infty}e^{h(\ell)k}e^{-\lambda}\frac{\lambda^{k}}{k!}\right\}^{[t]+1}e^{-\epsilon h(\ell)t}\nonumber
\\
&\leq\left\{1+C_{1}\sum_{k=\ell}^{\infty}e^{h(\ell)k+\log(\lambda)k-\log(k)k}\right\}^{[t]+1}e^{-\epsilon h(\ell)t}\nonumber
\\
&\leq\left\{1+C_{2}e^{h(\ell)\ell+\log(\lambda)\ell-\log(\ell)\ell}\right\}^{[t]+1}e^{-\epsilon h(\ell)t}.\nonumber
\end{align}
Choosing $h(\ell)=(\log(\ell))^{1/2}$ will do the work.
\end{proof}

The following Lemma \ref{finalestimate} provides us the superexponential estimates that we need. These superexponential
estimates have basically been done in Lemma \ref{midestimate}. The difference is that in the statement in Lemma \ref{midestimate},
we used $\omega$ and in Lemma \ref{finalestimate} it is changed to $\omega_{t}$ which is what we needed.
Lemma \ref{finalestimate} has three statements. Part (i) says if you start with a sequence of simple point processes,
the limiting point process may not be simple, but that has probability that is superexponentially small. Part (ii) is the
usual superexponential we would expect if $\mathcal{M}_{S}(\Omega)$ were equipped with weak topology. But since
we are using a strengthened weak topology with the convergence of first moment as well, we will also need Part (iii).

\begin{lemma}\label{finalestimate}
We have the following superexponential estimates.

(i) For some $g(\delta)\rightarrow 0$ as $\delta\rightarrow 0$,
\begin{equation}
\limsup_{\delta\rightarrow 0}\limsup_{t\rightarrow\infty}\frac{1}{t}\log P\left(\frac{1}{\delta t}
\int_{0}^{t}\chi_{N[0,\delta]\geq 2}(\theta_{s}\omega_{t})ds\geq g(\delta)\right)=-\infty.
\end{equation}

(ii) For some $\varepsilon(M)\rightarrow 0$ as $M\rightarrow\infty$,
\begin{equation}
\limsup_{M\rightarrow\infty}\limsup_{t\rightarrow\infty}\frac{1}{t}\log P\left(\frac{1}{t}\int_{0}^{t}
\chi_{N[0,1]\geq M}(\theta_{s}\omega_{t})ds\geq\varepsilon(M)\right)=-\infty.
\end{equation}

(iii) For some $m(\ell)\rightarrow 0$ as $\ell\rightarrow\infty$,
\begin{equation}
\limsup_{\ell\rightarrow\infty}\limsup_{t\rightarrow\infty}\frac{1}{t}\log P\left(\frac{1}{t}
\int_{0}^{t}N[0,1]\chi_{N[0,1]\geq\ell}(\theta_{s}\omega_{t})ds\geq m(\ell)\right)=-\infty.
\end{equation}
\end{lemma}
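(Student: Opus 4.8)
The plan is to reduce each of (i)--(iii) to the corresponding part of Lemma~\ref{midestimate} (which is phrased in terms of $\theta_s\omega$) by controlling the discrepancy introduced when $\omega$ is replaced by its periodized version $\omega_t$, and then to run a diagonal argument that produces the vanishing functions $g$, $\varepsilon$, $m$. The key elementary observation is that a functional of the configuration restricted to $[0,c]$ takes the same value at $\theta_s\omega_t$ and at $\theta_s\omega$ whenever $0\le s\le t-c$, since then $[s,s+c]\subseteq[0,t]$ and $\omega_t$ agrees with $\omega$ on $[0,t]$.

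Thus in (i) (with $c=\delta$) and in (ii) (with $c=1$) the two empirical integrals differ only over $s\in(t-c,t)$, a set of length $c$, so
\begin{equation}
\left|\frac{1}{\delta t}\int_0^t\chi_{N[0,\delta]\geq 2}(\theta_s\omega_t)\,ds-\frac{1}{\delta t}\int_0^t\chi_{N[0,\delta]\geq 2}(\theta_s\omega)\,ds\right|\leq\frac{1}{t},
\end{equation}
and similarly the discrepancy in (ii) is at most $1/t$. For (iii) the discrepancy is no longer bounded by a constant, but for $s\in[t-1,t]$ one has $N[s,s+1](\omega_t)\le N[t-1,t](\omega)+N[0,1](\omega)$, whence
\begin{equation}
\frac{1}{t}\int_0^t N[0,1]\chi_{N[0,1]\geq\ell}(\theta_s\omega_t)\,ds\leq\frac{1}{t}\int_0^t N[0,1]\chi_{N[0,1]\geq\ell}(\theta_s\omega)\,ds+\frac{N[t-1,t](\omega)+N[0,1](\omega)}{t}.
\end{equation}
By Lemma~\ref{indfinite} (together with the finiteness of $\mathbb{E}^{P^{\emptyset}}[e^{qN[0,1]}]$), the last term satisfies $\limsup_{t\to\infty}\frac{1}{t}\log P\big((N[t-1,t]+N[0,1])/t\geq\eta\big)=-\infty$ for every fixed $\eta>0$.

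I would then carry out the diagonal extraction, illustrated on (i). For each $n\in\mathbb{N}$, Lemma~\ref{midestimate}(i) with $\epsilon=1/(2n)$ provides $\delta_n>0$, which we may take strictly decreasing with $\delta_n\to 0$, such that $\limsup_{t\to\infty}\frac{1}{t}\log P\big(\frac{1}{\delta t}\int_0^t\chi_{N[0,\delta]\geq 2}(\theta_s\omega)\,ds\geq\frac{1}{2n}\big)\leq -n$ for all $\delta\in(0,\delta_n]$. Set $g(\delta)=1/n$ for $\delta\in(\delta_{n+1},\delta_n]$ (and $g(\delta)=1$ for $\delta\geq\delta_1$), so that $g(\delta)\to 0$ as $\delta\to 0$. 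For $\delta\in(\delta_{n+1},\delta_n]$ and $t$ large enough that $1/t<1/(2n)$, the comparison above yields $\{\frac{1}{\delta t}\int_0^t\chi_{N[0,\delta]\geq 2}(\theta_s\omega_t)\,ds\geq g(\delta)\}\subseteq\{\frac{1}{\delta t}\int_0^t\chi_{N[0,\delta]\geq 2}(\theta_s\omega)\,ds\geq\frac{1}{2n}\}$, so the $\limsup_{t\to\infty}\frac1t\log$ of the $\omega_t$-probability is $\leq-n$; letting $\delta\to 0$ (hence $n\to\infty$) proves (i), and (ii) follows identically with $\varepsilon$ (in place of $g$) built from Lemma~\ref{midestimate}(ii). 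For (iii) I would first split, using the displayed inequality, $P\big(\frac1t\int_0^t N[0,1]\chi_{N[0,1]\geq\ell}(\theta_s\omega_t)\,ds\geq m(\ell)\big)$ into $P\big(\frac1t\int_0^t N[0,1]\chi_{N[0,1]\geq\ell}(\theta_s\omega)\,ds\geq\frac{m(\ell)}{2}\big)+P\big((N[t-1,t]+N[0,1])/t\geq\frac{m(\ell)}{2}\big)$; the second probability contributes $-\infty$ after $\limsup_{t\to\infty}\frac1t\log$ for each fixed $\ell$, and the first is handled by the same diagonal construction applied to Lemma~\ref{midestimate}(iii), which simultaneously forces $m(\ell)\to 0$.

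The truly routine parts are the constants in the wrap-around bounds and the bookkeeping of the diagonal argument. The step requiring genuine care is (iii): in contrast with (i) and (ii), the periodization error there is an unbounded random variable rather than an $O(1/t)$ deterministic term, so one must invoke the superexponential tail bounds on $N[t-1,t]$ and $N[0,1]$ from Lemma~\ref{indfinite}, and one has to verify that sending $m(\ell)\to 0$ does not destroy them --- it does not, precisely because those bounds hold for every fixed positive threshold while $\ell$ is taken to infinity only after $t$.
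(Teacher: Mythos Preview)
Your proposal is correct and follows essentially the same route as the paper: bound the discrepancy between the $\omega_t$- and $\omega$-integrals (deterministic $O(1/t)$ for (i) and (ii), a random boundary term controlled by Lemma~\ref{indfinite} for (iii)), then reduce to Lemma~\ref{midestimate} and run the diagonal extraction that the paper dismisses as ``a standard analysis argument.'' Your wrap-around bounds are in fact slightly tighter than the paper's (you get $\delta$ and $1$ rather than $2\delta$ and $2$ for the unnormalized differences, and you only need the one-sided inequality in (iii)), but the argument is otherwise the same.
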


\begin{proof}
We can replace the $\epsilon$ in the statement of Lemma \ref{midestimate} by $g(\delta)$, $\varepsilon(M)$ 
and $m(\ell)$ by a standard analysis argument. We can also replace the $\omega$ in Lemma \ref{midestimate} by $\omega_{t}$ here since
\begin{equation}
\left|\int_{0}^{t}\chi_{N[0,\delta]\geq 2}(\theta_{s}\omega_{t})ds-\int_{0}^{t}\chi_{N[0,\delta]\geq 2}(\theta_{s}\omega)ds\right|\leq 2\delta,
\end{equation}
\begin{equation}
\left|\int_{0}^{t}\chi_{N[0,1]\geq M}(\theta_{s}\omega_{t})ds-\int_{0}^{t}\chi_{N[0,1]\geq M}(\theta_{s}\omega)ds\right|\leq 2,
\end{equation}
and
\begin{align}
&\left|\int_{0}^{t}N[0,1]\chi_{N[0,1]\geq\ell}(\theta_{s}\omega_{t})ds-\int_{0}^{t}N[0,1]\chi_{N[0,1]\geq\ell}(\theta_{s}\omega)ds\right|
\\
&\leq\int_{t-1}^{t}N[s,s+1](\omega)ds+\int_{t-1}^{t}N[s,s+1](\omega_{t})ds\nonumber
\\
&\leq N[t-1,t+1](\omega)+N[t-1,t+1](\omega_{t})\nonumber
\\
&=N[t-1,t+1](\omega)+N[t-1,t](\omega)+N[0,1](\omega).\nonumber
\end{align}
By Lemma \ref{indfinite}, we have the superexponential estimate, for any $\epsilon>0$,
\begin{equation}
\limsup_{t\rightarrow\infty}\frac{1}{t}\log P\left(\frac{1}{t}\left\{N[t-1,t+1](\omega)+N[t-1,t](\omega)+N[0,1](\omega)\right\}\geq\epsilon\right)=-\infty.
\end{equation}
\end{proof}

\begin{lemma}\label{tightness}
For any $\delta, M>0,\ell>0$, define
\begin{align}
&\mathcal{A}_{\delta}=\left\{Q\in\mathcal{M}_{S}(\Omega): Q(N[0,\delta]\geq 2)\leq\delta g(\delta)\right\},
\\
&\mathcal{A}_{M}=\left\{Q\in\mathcal{M}_{S}(\Omega): Q(N[0,1]\geq M)\leq\varepsilon(M)\right\},\nonumber
\\
&\mathcal{A}_{\ell}=\left\{Q\in\mathcal{M}_{S}(\Omega):\int_{N[0,1]\geq\ell}N[0,1]dQ\leq m(\ell)\right\},\nonumber
\end{align}
where $\varepsilon(M)\rightarrow 0$ as $M\rightarrow\infty$, $m(\ell)\rightarrow 0$ as $\ell\rightarrow\infty$ 
and $g(\delta)\rightarrow 0$ as $\delta\rightarrow 0$. 
Let $\mathcal{A}_{\delta,M,\ell}=\mathcal{A}_{\delta}\cap\mathcal{A}_{M}\cap\mathcal{A}_{\ell}$ and
\begin{equation}
\mathcal{A}^{n}=\bigcap_{j=n}^{\infty}\mathcal{A}_{\frac{1}{j},j,j}.
\end{equation}
Then, $\mathcal{A}^{n}$ is compact.
\end{lemma}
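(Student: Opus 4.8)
The plan is to prove that $\mathcal{A}^{n}$ is \emph{sequentially} compact; since the topology on $\mathcal{M}_{S}(\Omega)$ is induced by the metric $d(\cdot,\cdot)$, this is the same as compactness. So I would fix an arbitrary sequence $(Q_{k})_{k}$ in $\mathcal{A}^{n}$ and extract a subsequence converging, in the metric $d$, to some $Q$ that again lies in $\mathcal{A}^{n}$. The first step is a uniform first-moment bound: for any $Q\in\mathcal{A}^{n}$, splitting the integral at level $n$ and using the defining inequality of $\mathcal{A}_{n}$,
\[
\mathbb{E}^{Q}[N[0,1]]=\int_{N[0,1]<n}N[0,1]\,dQ+\int_{N[0,1]\ge n}N[0,1]\,dQ\le n+m(n),
\]
so the intensities are bounded uniformly over $\mathcal{A}^{n}$. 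By stationarity $\mathbb{E}^{Q}[N(K)]\le\lceil|K|\rceil\,(n+m(n))$ for every compact $K\subset\mathbb{R}$, and Markov's inequality gives $\sup_{Q\in\mathcal{A}^{n}}Q(N(K)\ge L)\to0$ as $L\to\infty$. This is precisely the standard relative-compactness criterion for point processes in the vague topology (see Grandell \cite{Grandell}), so $(Q_{k})$ has a subsequence, still denoted $(Q_{k})$, converging weakly to some locally finite point process $Q$ on $\Omega$.

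Next I would identify the limit. Since the shifts $\theta_{t}$ are continuous and each $Q_{k}$ is $\theta_{t}$-invariant, $Q$ is $\theta_{t}$-invariant; by lower semicontinuity of $Q\mapsto\mathbb{E}^{Q}[N(0,1)]$ under weak convergence and the bound above, $\mathbb{E}^{Q}[N(0,1)]\le n+m(n)<\infty$, and by stationarity $Q$ has finite intensity on every bounded interval. A standard averaging argument (integrate $\omega\mapsto\omega\{s\}$ in $s$ and invoke stationarity) then shows $Q(\omega\{s\}\ge1)=0$ for every fixed $s$, hence $N[a,b]=N(a,b)$ $Q$-a.s. for all fixed $a<b$, so each such $N[a,b]$ is $Q$-a.s.\ continuous on $\Omega$. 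By the continuous mapping theorem the laws of $N[a,b]$ under $Q_{k}$ converge weakly, on $\mathbb{Z}_{\ge0}\cup\{\infty\}$, to its law under $Q$. The uniform integrability built into the family $\{\mathcal{A}_{j}\}_{j\ge n}$, namely $\sup_{Q\in\mathcal{A}^{n}}\int_{N[0,1]\ge j}N[0,1]\,dQ\le m(j)\to0$, rules out any escape of mass to $\infty$ along the subsequence and upgrades the convergence of marginals to $\mathbb{E}^{Q_{k}}[N[0,1]]\to\mathbb{E}^{Q}[N[0,1]]$; together with weak convergence this gives $Q_{k}\to Q$ in the metric $d$.

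It remains to check $Q\in\mathcal{A}^{n}$. Because the limit has no atom at the endpoints of the relevant intervals, the marginal convergence above lets me pass every defining inequality to the limit: for each $j\ge n$ one obtains $Q(N[0,1]\ge j)\le\varepsilon(j)$, $\int_{N[0,1]\ge j}N[0,1]\,dQ\le m(j)$, and $Q(N[0,1/j]\ge2)\le(1/j)\,g(1/j)$. The last family, using monotonicity of $\delta\mapsto Q(N[0,\delta]\ge2)$ between the points $1/j$, forces $\tfrac1\delta Q(N[0,\delta]\ge2)\to0$ as $\delta\downarrow0$, which is exactly the criterion guaranteeing that $Q$ is a \emph{simple} point process. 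Hence $Q\in\mathcal{M}_{S}(\Omega)$ with $Q\in\mathcal{A}_{1/j,j,j}$ for all $j\ge n$, i.e.\ $Q\in\mathcal{A}^{n}$, which establishes sequential compactness and therefore compactness.

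I expect the genuine difficulty to lie in the two non-standard features of the setup rather than in any hard estimate. First, on the sample space $N[0,\delta]$ is only upper semicontinuous, so passing the $\mathcal{A}_{\delta}$-inequalities to a weak limit cannot be done by naive portmanteau; one has to exploit that a stationary process with finite intensity carries no mass at a fixed time (equivalently work with open enlargements $N(-\eta,\delta+\eta)$), and it is this step that is responsible for keeping the limit \emph{simple} — recall that $\mathcal{M}_{S}(\Omega)$ is not closed inside the space of point processes. Second, because $\mathcal{M}_{S}(\Omega)$ carries the strengthened weak topology, tightness alone is not enough: one must separately secure convergence of $\mathbb{E}^{Q_{k}}[N[0,1]]$, and the conditions defining $\mathcal{A}_{\ell}$ are exactly the uniform-integrability input that makes this work, turning relative weak compactness into honest compactness.
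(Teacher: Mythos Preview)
Your proposal is correct and follows essentially the same architecture as the paper's proof: establish weak tightness, use the uniform-integrability condition from $\mathcal{A}_{\ell}$ to upgrade to convergence of first moments, verify that $\mathcal{A}^{n}$ is closed, and use the $\mathcal{A}_{\delta}$ conditions to force simplicity of the limit. The only cosmetic differences are that the paper obtains tightness by building explicit relatively compact sets $K_{\beta}\subset\Omega$ from the tail bounds $\varepsilon(M)$ rather than invoking Grandell's criterion via the first-moment bound, and it verifies simplicity by an explicit covering of $[-k,k]$ by intervals of length $\delta$ rather than citing the orderliness criterion $Q(N[0,\delta]\ge2)=o(\delta)$; your treatment of closedness (no fixed atoms for a stationary process with finite intensity, then continuous mapping) is in fact more careful than the paper's one-line claim.
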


\begin{proof}
Observe that for $\beta>0$,
\begin{equation}
K_{\beta}=\bigcap_{k=1}^{\infty}\left\{\omega:\{N[-k,-(k-1)](\omega)\leq\beta\ell_{k}\}\cap\{N[k-1,k](\omega)\leq\beta\ell_{k}\}\right\}
\end{equation}
are relatively compact sets in $\Omega$. Let $\overline{K_{\beta}}$ be the closure of $K_{\beta}$, 
which is then compact.

For any $Q\in\mathcal{A}^{n}$, $Q(N[0,1]\geq M)\leq\epsilon(M)$ for any $M\geq n$. 
We can choose $\beta$ big enough and an increasing sequence $\ell_{k}$ such that $\beta\ell_{1}\geq n$ 
and $\infty>\sum_{k=1}^{\infty}\epsilon(\beta\ell_{k})\rightarrow 0$ as $\beta\rightarrow\infty$, uniformly for $Q\in\mathcal{A}^{n}$,
\begin{align}
Q\left(\overline{K_{\beta}}^{c}\right)&\leq Q(K_{\beta}^{c})
\\
&=Q\left(\bigcup_{k=1}^{\infty}\{N[-k,-(k-1)](\omega)>\beta\ell_{k}\}\cap\{N[k-1,k](\omega)>\beta\ell_{k}\}\right)\nonumber
\\
&\leq\sum_{k=1}^{\infty}\left\{Q(N[-(k-1),-k]>\beta\ell)+Q(N[k-1,k]>\beta\ell_{k})\right\}\nonumber
\\
&=2\sum_{k=1}^{\infty}Q(N[0,1]>\beta\ell_{k})\nonumber
\\
&\leq 2\sum_{k=1}^{\infty}\epsilon(\beta\ell_{k})\rightarrow 0\nonumber
\end{align}
as $\beta\rightarrow\infty$. Therefore, $\mathcal{A}^{n}$ is tight in the weak topology and by 
Prokhorov theorem $\mathcal{A}^{n}$ is precompact in the weak topology. 
In other words, for any sequence in $\mathcal{A}^{n}$, there exists a subsequence, say $Q_{n}$ 
such that $Q_{n}\rightarrow Q$ weakly as $n\rightarrow\infty$ for some $Q$. By the definition of $\mathcal{A}^{n}$, $Q_{n}$ 
are uniformly integrable, which implies that $\int N[0,1]dQ_{n}\rightarrow\int N[0,1]dQ$ 
as $n\rightarrow\infty$. It is also easy to see that $\mathcal{A}^{n}$ is closed by checking that 
each $\mathcal{A}_{\frac{1}{j},j,j}$ is closed. That implies that $Q\in\mathcal{A}^{n}$. 
Finally, we need to check that $Q$ is a simple point process. Let $I_{j,\delta}=[(j-1)\delta,j\delta]$. 
We have for any $Q\in\mathcal{A}^{n}$,
\begin{align}
Q\left(\exists t: N[t-,t]\geq 2\right)&=Q\left(\bigcup_{k=1}^{\infty}\left\{\exists t\in[-k,k]: N[t-,t]\geq 2\right\}\right)
\\
&=Q\left(\bigcup_{k=1}^{\infty}\bigcap_{\delta>0}\bigcup_{j=-[k/\delta]+1}^{[k/\delta]}
\left\{\omega:\#\{\omega\cup I_{j,\delta}\}\geq 2\right\}\right)\nonumber
\\
&\leq\sum_{k=1}^{\infty}\inf_{\delta=\frac{1}{m},m\geq n}\sum_{j=-[k/\delta]+1}^{[k/\delta]}Q(\#\{\omega\cup I_{j,\delta}\}\geq 2)\nonumber
\\
&\leq\sum_{k=1}^{\infty}\inf_{\delta=\frac{1}{m},m\geq n}\{2[k/\delta]\delta g(\delta)\}\nonumber
\\
&=0.\nonumber
\end{align}
Hence, $\mathcal{A}^{n}$ is precompact in our topology.  Since $\mathcal{A}^{n}$ is closed, it is compact.
\end{proof}

\section{Concluding Remarks}\label{conclusion}

In this paper, we obtained a process-level large deviation principle for a wide class of simple point processes,
i.e. nonlinear Hawkes process. Indeed, the methods and ideas should apply to some other simple point processes as well
and we should expect to get the same expression for the rate function $H(Q)$. For $H(Q)<\infty$, it should satisfy
\begin{equation}
H(Q)=\int_{\Omega}\int_{0}^{1}\lambda(\omega,s)-\hat{\lambda}(\omega,s)+\log\left(\frac{\hat{\lambda}(\omega,s)}{\lambda(\omega,s)}\right)
\hat{\lambda}(\omega,s)dsQ(d\omega),
\end{equation}
where $\lambda(\omega,s)$ is the intensity of the underlying simple point process. Now, it would be interesting
to ask for what conditions for a simple point process would guarantee the process-level large deviation principle 
that we obtained in our paper? First, we have to assume that $\lambda(\omega,t)$ is predictable and progressively measurable.
Second, from the proof of the upper bound in our paper, the key assumption we used about
nonlinear Hawkes processes was that $\lim_{z\rightarrow\infty}\frac{\lambda(z)}{z}=0$. That is crucial to guarantee
the superexponential estimates we needed for the upper bound. If for a simple point process, we have $\lambda(\omega,t)\leq F(N(t,\omega))$
for some sublinear function $F(\cdot)$, we would expect the superexponential estimates still works for the upper bound.
Third, it is not enough to have $\lambda(\omega,t)\leq F(N(t,\omega))$ for sublinear $F(\cdot)$ to get the full large deviation
principle. The reason is that in the proof of lower bound, in particular, in Lemma \ref{mainlower}, we need to use the fact
that if $\lambda(\omega,t)$ has memory, the memory will decay to zero eventually over time. For nonlinear Hawkes process,
this is guaranteed by the assumption that $\int_{0}^{\infty}h(t)dt<\infty$, which is crucial in the proof of Lemma \ref{mainlower}.
Indeed for any simple point process $P$, if you want to define $P^{\omega^{-}}$, the probability measure conditional 
on the past history $\omega^{-}$, to make sense of it, you have to have some regularities to ensure that the memory
of the history will decay to zero eventually over time.
From this perspective, nonlinear Hawkes processes form a rich and ideal class for which the process-level large deviation principle holds.

\section*{Acknowledgements}

The author is enormously grateful to his advisor Professor S. R. S. Varadhan for suggesting this topic and for his superb guidance, 
understanding, patience, and generosity. 
The author also wishes to thank an anonymous referee for helpful suggestions. 
The author is supported by NSF grant DMS-0904701, DARPA grant and MacCracken Fellowship at NYU.

\end{document}